\documentclass[12pt,reqno]{amsart}
\usepackage[dvipsnames]{xcolor}
\usepackage[colorlinks=true,linkcolor=Blue,citecolor=NavyBlue]{hyperref}
\usepackage[T1]{fontenc}
\usepackage{geometry}
\usepackage{amsmath,amssymb}
\usepackage{amsfonts}
\usepackage{eucal}
\usepackage{amsthm}
\usepackage{etoolbox}
\usepackage{mathrsfs}
\numberwithin{equation}{section}
\usepackage{comment}
\usepackage{tcolorbox}

\usepackage{mathtools}\usepackage{tikz}
\usetikzlibrary{arrows.meta,decorations.markings}

\makeatletter
\def\author@andify{%
  \nxandlist
    {\unskip, \penalty-1\space\ignorespaces}%
    {\unskip, \penalty-2\space\ignorespaces}%
    {\unskip, \penalty-2\space\ignorespaces}%
}
\makeatother

\def\a{\alpha}
\def\b{\beta}
\def\c{\gamma}

\def\f{\varphi}
\def\g{\psi}

\let\ogonekaccent\k
\renewcommand\k{\kappa}

\def\s{\sigma}

\def\x{\xi}
\def\y{\eta}
\def\z{\zeta}

\newcommand{\La}{\Lambda}

\def\re{\mathbb{R}}

\def\pa{\partial}

\renewcommand{\Re}{\text{{\rm Re}\;}}

\newcommand{\supp}{\text{{\rm supp}\;}}

\newcommand{\Ker}{\text{{\rm Ker}\;}}

\newcommand{\vol}{\text{\rm vol}}

\newcommand{\sgn}{\text{{\rm sgn}\;}}
\newcommand{\proofsubtitle}[1]{\par\smallskip\noindent\textit{#1}\ }
\newcounter{proofstep}
\AtBeginEnvironment{proof}{\setcounter{proofstep}{0}}
\newcommand{\proofstep}[1][]{%
  \refstepcounter{proofstep}%
  \proofsubtitle{Step \theproofstep\ifstrempty{#1}{}{ (#1)}.}%
}

\newtheorem{thm}{Theorem}[section]
\newtheorem{lem}[thm]{Lemma}
\newtheorem{prop}[thm]{Proposition}
\newtheorem{cor}[thm]{Corollary}

\theoremstyle{definition}
\newtheorem{defn}[thm]{Definition}

\theoremstyle{remark}
\newtheorem{rem}[thm]{Remark}

\newcommand{\WF}{\mathrm{WF}}

\title[Large eigenvalues of the Connes--Moscovici operator]{Large eigenvalues of the Connes--Moscovici operator}

\author{Kouichi Taira}
\address{Faculty of Mathematics, Kyushu University, Fukuoka, Japan}
\email{taira.kouichi.800@m.kyushu-u.ac.jp}

\author{Max Willems}
\address{Mathematical Institute, Universiteit Utrecht, Utrecht, The Netherlands}
\email{m.j.a.willems@students.uu.nl}

\author{Micha{\l} Wrochna}
\address{Mathematical Institute, Universiteit Utrecht, Utrecht, The Netherlands \vspace{-0.3cm}}
\address{Mathematics \& Data Science, Vrije Universiteit Brussel, Brussels, Belgium}
\email{m.wrochna@uu.nl} 
\email{michal.wrochna@vub.be}

\makeatletter \def\ps@firstpage{
 \ps@plain \def\@oddfoot{\normalfont\scriptsize \hfil\raisebox{-\baselineskip}[0pt][0pt]{\thepage}\hfil} \let\@evenfoot\@oddfoot } 
 \makeatother

\begin{document}

\begin{abstract}
We prove the logarithmic Weyl law predicted by Connes and Moscovici for large positive eigenvalues of the distinguished self-adjoint extension of the operator $P=-\partial_x(x^2-1)\partial_x-4\pi^2x^2$ on the real line. More precisely,  we use the WKB method and show a Bohr--Sommerfeld quantization rule  characterizing sufficiently large eigenvalues. A particular difficulty is that the associated $h$-dependent Lagrangian
submanifolds are non-compact, which requires a renormalization of the action integral.  Furthermore, to deal with the degeneracy of the coefficients at $x =\pm 1$, we use a quasimode matching argument based on radial estimates and Lagrangian regularity.
\end{abstract}

\maketitle

\newcommand\blfootnote[1]{%
  \begingroup
  \renewcommand\thefootnote{}\footnote{#1}%
  \addtocounter{footnote}{-1}%
  \endgroup
}

\blfootnote{2020 Mathematics Subject Classification.
Primary 34L20;
Secondary 35P20, 81Q20, 35A27.}

\section{Introduction}

\subsection{Introduction and main result}\label{subsection:model-main-result}

Weyl laws relate the leading term in the asymptotics of the eigenvalue counting function of a differential operator to the volume of an appropriate region in phase space. When the highest-order coefficients degenerate, the volume can be infinite, or the leading  contribution may concentrate near a singular set, in which case the resulting asymptotics may involve non-classical powers or logarithmic factors. Degenerate elliptic and hypoelliptic operators, singular metrics, and related non-classical regimes have been studied in many settings, see \S\ref{subsection:logarithmic-weyl-laws}. However, an all-encompassing theory is not presently available.

\medskip

In this paper we study the differential expression
\begin{align}\label{eq:Psa-CM}
P=-\partial_x(x^2-1)\partial_x-4\pi^2x^2
\end{align}
on the \textit{full} real line. It is a particularly simple model in which the leading coefficient has two simple zeros and changes sign. From the point of view of Sturm--Liouville theory, the real line is naturally divided into the three intervals $(-\infty,-1)$, $(-1,1)$, and $(1,\infty)$. Near $x=\pm 1$, the associated Sturm--Liouville equation admits one bounded solution and one logarithmic solution, so these endpoints are of limit-circle type. At infinity, the two independent solutions have leading form $|x|^{-1}e^{\pm2\pi ix}$ and are both square-integrable. Thus, specifying a self-adjoint realization of $P$ in $L^2(\re)$ requires boundary conditions at both $|x|=1$ and $|{x}|=\infty$.

The operator $-P$ restricted to the middle interval $(-1,1)$ is the well-studied \emph{prolate spheroidal operator} and has a pretty straightforward spectral theory. Namely, the boundary condition
\begin{align}\label{eq:prolate-no-log}
\lim_{x\to\pm1}(1-x^2)\partial_xu(x)=0
\end{align}
eliminates the logarithmic term and defines a self-adjoint operator with compact resolvent and quadratically growing eigenvalues. The eigenfunctions are the so-called prolate spheroidal wave functions, which frequently arise in applications as they diagonalize the truncated Fourier transform \cite{SP,ORX}.

As observed by Connes and Moscovici \cite{CM}, considering $P$ on the whole real line gives rise to new spectral phenomena. While there are many self-adjoint extensions, there is a distinguished one, denoted here by $P_{\rm sa}$, which enjoys special commutation properties with the truncated Fourier transform. It can be defined by imposing condition \eqref{eq:prolate-no-log} at $x=\pm1$ and  selecting a sine (resp.~cosine) type asymptotic for the even (resp.~odd) part
of functions in the maximal domain  (see \S\ref{subsection:boundary-data}). The self-adjoint operator $P_{\rm sa}$ is shown in \cite{CM} to have discrete spectrum.

\medskip

A special role in the analysis is played by the function
\begin{align}\label{eq:Iadef}
I(a)=\int_1^{\infty}\biggl(\sqrt{\frac{x^2+a-1}{x^2-1}}-1\biggr)dx,
\quad a>0.
\end{align}
This function arises as the renormalized action associated with one component of the zero set $\{p_h=0\}$ analysed in \S\ref{subsection:semiclassical-dynamics}. Our main result is the following theorem, which characterizes all sufficiently large eigenvalues of $P_{\rm sa}$ by a Bohr--Sommerfeld quantization rule.

\begin{thm}\label{thm:BSthm}
There exist ${E_*}>0$ and $R\in C^\infty([{E_*},\infty))$ such that
\begin{align*}
R(E)=-\frac{1}{16\pi}E^{-\frac12}\log E+O(E^{-\frac12}),\quad R'(E)=O(E^{-\frac32}\log E)
\end{align*}
as $E\to\infty$, and the following holds true. The positive spectrum of $P_{\rm sa}$ above ${{E_*}}$ consists of simple eigenvalues $E_n$, $n\in\frac12\mathbb Z$, $n\gg1$, satisfying
\begin{align}\label{BS}
2I\biggl(1+\frac{E_n}{4\pi^2}\biggr)+\frac14+R(E_n)
=n+O(E_n^{-\infty}).
\end{align}
\end{thm}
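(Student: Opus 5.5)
We reduce the eigenvalue problem $P_{\rm sa}u=Eu$ to a semiclassical ODE problem on the half-line $x>1$. Set $h=E^{-1/2}$ and rescale the spectral parameter as $a=1+E/(4\pi^2)$, so that the equation reads
\begin{align*}
\bigl(-\partial_x(x^2-1)\partial_x-4\pi^2(x^2+a-1)\bigr)u=0,
\end{align*}
with $4\pi^2(a-1)=E$ playing the role of a large parameter. The operator commutes with $x\mapsto -x$, so we split into even and odd eigenfunctions. In each parity class we only need to match solutions on $[0,\infty)$: the solution on $(-1,1)$ is regular at $x=1$ by \eqref{eq:prolate-no-log}, and the solution on $(1,\infty)$ has the prescribed sine/cosine asymptotics at infinity. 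The boundary data of \S\ref{subsection:boundary-data} relate the coefficient of the regular solution at $x=1^\pm$ on both sides. The counting by $n\in\frac12\mathbb Z$ reflects the two parities, which give interlacing quantization conditions differing by $\frac12$.

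\textbf{Outer region $x>1$.} The principal symbol is $(x^2-1)\xi^2-4\pi^2(x^2+a-1)$, and the characteristic set is $\xi=\pm2\pi\sqrt{(x^2+a-1)/(x^2-1)}$. This set is a graph over $x>1$ with no turning points, blowing up like $(x-1)^{-1/2}$ at $x=1$ and tending to $\pm2\pi$ at infinity. WKB therefore gives exact solutions of the form
\begin{align*}
u_\pm = (x^2-1)^{-1/4}(x^2+a-1)^{-1/4}\, e^{\pm i\phi(x)}\bigl(1+O(E^{-\infty})\bigr),
\end{align*}
with the full symbol constructed by the standard transport recursion. The phase is normalized against $2\pi x$ at infinity; this is exactly where the renormalization enters, since the phase difference between $x=1$ and $\infty$ after subtracting $2\pi x$ produces $2\pi I(a)$. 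The sine/cosine condition at infinity then fixes the ratio of $u_+$ to $u_-$ up to the phase $2I(a)$, in units of $\pi$.

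\textbf{Near $x=1$: the main obstacle.} The WKB ansatz degenerates at $x=1$, where $x-1\sim E^{-1}$ is the transition scale. In the variable $y=E(x-1)$, the equation becomes to leading order a Bessel equation
\begin{align*}
\partial_y\bigl(2y\,\partial_y u\bigr)+u=0,
\end{align*}
whose solutions are $J_0(\sqrt{2y})$ and $Y_0(\sqrt{2y})$. Here $J_0$ is the regular solution and $Y_0$ the logarithmic one, and the analogous statement holds for $x<1$ with $I_0/K_0$. I would construct quasimodes in this inner zone, to all orders in $E^{-1}$ with corrections in powers of $y/E$. The large-$y$ asymptotics of $J_0$ supply the Maslov-type phase $\frac14$, while $Y_0$ contributes a $\log$ term. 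Matching the inner solutions with the WKB solutions in an overlap region $E^{-1}\ll x-1\ll 1$ then yields the correction $R(E)$. Its leading term $-\frac1{16\pi}h\log E$ comes from the subleading expansion of the phase $\phi$ near $x=1$: there is a $\log(x-1)$ term in the $h^1$ correction, evaluated at the matching scale $x-1\sim E^{-1}$. To make the matching rigorous, rather than relying on formal asymptotics, I would use radial estimates at the radial points of the lifted flow over $x=1$ together with Lagrangian regularity. These show that the exact solution regular at $1^\pm$ lies microlocally on the incoming Lagrangian, so it agrees with the glued quasimode up to $O(E^{-\infty})$.

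\textbf{Concluding the count.} On the inner interval $(-1,1)$ the symbol is elliptic for $E$ large, since $(1-x^2)\xi^2+4\pi^2x^2+E>0$. The regular solution there is therefore exponentially controlled, and its contribution to the quantization condition is only a parity-dependent constant absorbed into $n\in\frac12\mathbb Z$. Combining this with the outer phase gives the quantization condition \eqref{BS}, written as
\begin{align*}
F(E):=2I\Bigl(1+\frac{E}{4\pi^2}\Bigr)+\frac14+R(E)\in\tfrac12\mathbb Z+O(E^{-\infty}).
\end{align*}
Since $I'(a)\sim c\,a^{-1/2}\log a$, $F$ is strictly increasing, and the bound $R'=O(E^{-3/2}\log E)$ is smaller than $F'$, so each $n$ gives exactly one $E_n$. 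Simplicity of these eigenvalues follows because the solution space with the prescribed boundary behaviour is one-dimensional in each parity class. Completeness, meaning that no other eigenvalues exist above $E_*$, follows because any eigenfunction must coincide with the glued solution, and the glued solution satisfies the boundary conditions only when the quantization condition holds.
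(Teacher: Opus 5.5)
\textbf{Verdict: genuine gap.} Your parity split and your Bessel zone at $x=1$ are sound. With $s=h^{-1}\int_1^x\xi\,dy$, the solution without logarithmic term is $J_0(s)$ to leading order, and $J_0(s)\sim\sqrt{2/(\pi s)}\cos(s-\pi/4)$ gives the $\frac14$; together with the renormalized action this reproduces $2I+\frac14$. The gap is in the correction $R(E)$, whose leading term is part of the statement.

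\emph{Where the logarithm does not come from.} There is no $\log(x-1)$ term at order $h$ near $x=1$. With $a_0=|g\varphi'|^{-1/2}$, the first transport integrand $Q=(ga_0''+g'a_0')|g\varphi'|^{-1/2}$ behaves there like $\frac14(2(x-1))^{-3/2}$. So the $h^1$ correction is $\sim h(x-1)^{-1/2}$, which is just the $-\frac{i}{8s}$ term of the Hankel expansion. The inner matching therefore produces only powers of $h$; in the paper, $b_j\in S^{-1-j}$ and $\widetilde a(3,h)=\widetilde a_0(3,h)(1+O(h))$.

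\emph{Where it does come from.} The logarithm comes from the other end of $\Lambda_{+,+}$. On $3\le x\le(2\pi h)^{-1}$ one has $Q=-\frac1{4x}+O(x^{-3})+O(h^2x)$. Integrating the transport equation in from infinity gives $a_1(3,h)/a_0(3,h)=\frac i8\log(1/h)+O(1)$. Hence $d(h)=-\frac h{8\pi}\log(1/h)+O(h)$ and $R(E)=d(E^{-1/2})$.

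\emph{Consequence for your outer step.} Your claim of solutions $(x^2-1)^{-1/4}(x^2+a-1)^{-1/4}e^{\pm i\phi}(1+O(E^{-\infty}))$ from the standard recursion hides exactly this term. The first correction at $x=3$ has relative size $E^{-1/2}\log E$, which is precisely the size of $R$. You must also control the recursion, and its $h$-derivatives (needed for $R'(E)=O(E^{-3/2}\log E)$), uniformly across the two scales $x\sim1$ and $x\sim E^{1/2}$. That requires something like the logarithmically weighted classes $\mathcal S^\ell_k$ of Definition~\ref{def:symbolclasses}, where each order costs a factor $hL_h/Y$. As written, your plan neither locates nor controls the term producing $-\frac1{16\pi}E^{-1/2}\log E$.

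\emph{Your treatment of $x=\pm1$ misreads the domain.} Condition \eqref{eq:bd1} only removes the logarithmic term. Only $(x^2-1)u'$ is continuous across $\pm1$, so the regular parts on the two sides are unrelated, and a jump $a^{(1)}_\sigma H(x-\sigma)$ is allowed. Thus $(-1,1)$ decouples. Since $P\le0$ there under \eqref{eq:prolate-no-log}, an eigenfunction with $E>0$ vanishes identically on $(-1,1)$. The interior contributes no ``parity-dependent constant''. The offset $\frac12$ between even (integer) and odd (half-integer) quantization comes solely from the sine versus cosine selection at infinity.

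\emph{The regular solution is not on a single incoming Lagrangian.} By Lemma~\ref{lem:quasimodeboundary}, $u_{+,+}$ and $u_{+,-}$ carry the same logarithmic coefficient at $x=1$. So the regular solution is $u_{+,+}-u_{+,-}$ modulo $O(h^\infty)$ and lives on both $\Lambda_{+,\pm}$. This reflection at the radial points is what turns the boundary conditions into $\mathfrak c_{+,+}\in\mathbb R\cup i\mathbb R$.

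With these corrections, your one-dimensional route is a reasonable alternative to the paper's four-quasimode completeness argument. Simplicity and completeness would then follow from the one-dimensionality of the no-log solution space on $(1,\infty)$ and the interlacing of the two parity conditions.
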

As a corollary, we obtain the following logarithmic Weyl law for the positive spectrum of $P_{\rm sa}$; see \S\ref{ss:finalproof} for the precise argument.

\begin{cor}\label{cor:Weyllaw}
Let $N_+(E)$ denote the number of positive eigenvalues of $P_{\rm sa}$ in $(0,E]$, counted with multiplicity. Then
\begin{equation}\label{eq:weyl}
N_+(E)=4\frac{\sqrt E}{2\pi}
\biggl(\log\frac{\sqrt E}{2\pi}-1+2\log2\biggr)+O(1),
\quad E\to\infty.
\end{equation}
\end{cor}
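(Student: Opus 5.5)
The corollary follows from Theorem~\ref{thm:BSthm} by counting solutions of \eqref{BS}; the one real input is an asymptotic expansion of $I(a)$ as $a\to\infty$. Set $k=\sqrt E/2\pi$, so that $a=1+E/4\pi^2=1+k^2$, and define
\[
F(E)=2I\bigl(1+\tfrac{E}{4\pi^2}\bigr)+\tfrac14+R(E),\qquad E\ge E_*.
\]
The plan has four steps:
\begin{enumerate}
\item show that $F$ is strictly increasing for large $E$;
\item use this to show that, up to $O(1)$, the positive eigenvalues in $(0,E]$ are exactly the $E_n\le E$ with $n\in\frac12\mathbb Z$;
\item deduce $N_+(E)=2F(E)+O(1)$;
\item expand $I(1+k^2)$ to order $O(1)$.
\end{enumerate}

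\emph{Monotonicity.} Differentiating under the integral gives
\[
\frac{d}{da}I(a)=\frac12\int_1^\infty\frac{dx}{\sqrt{(x^2-1)(x^2+a-1)}}>0 .
\]
This is of size $\gtrsim k^{-1}\log k$. Since $da/dE=1/4\pi^2$, we get $\frac{d}{dE}\,2I\gtrsim E^{-1/2}\log E$. By Theorem~\ref{thm:BSthm}, $R'(E)=O(E^{-3/2}\log E)$, which is much smaller, so $F'>0$ for $E$ large. Enlarging $E_*$ if needed, $F$ is a bijection from $[E_*,\infty)$ onto its image.

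\emph{Counting.} By Theorem~\ref{thm:BSthm}, the eigenvalues above $E_*$ are simple and indexed by $n\in\frac12\mathbb Z$, $n\ge n_0$, with $F(E_n)=n+O(E_n^{-\infty})$. Strict monotonicity together with the $O(E^{-\infty})$ error gives two facts. First, $E_n\le E$ holds for every $n\le F(E)-1$ and fails for every $n\ge F(E)+1$, for all large $E$. Second, there are finitely many positive eigenvalues in $(0,E_*]$, since the spectrum is discrete. Consequently
\[
N_+(E)=\#\bigl\{n\in\tfrac12\mathbb Z:\ n_0\le n\le F(E)\bigr\}+O(1)=2F(E)+O(1)=4I(1+k^2)+O(1),
\]
where the last equality uses $R=o(1)$.

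\emph{Asymptotics of $I$.} It remains to prove
\[
I(1+k^2)=k\bigl(\log k-1+2\log2\bigr)+O(1).
\]
Split the integral defining $I$ at a point $x=\delta k$ with $\delta$ small and fixed.
\begin{itemize}
\item On $x\ge\delta k$, substitute $x=kt$. The integrand becomes $k$ times $\sqrt{t^2+1}/t-1$, up to an $O(k^{-2}t^{-2})$ relative error, which contributes $O(1)$ after integration. Use the antiderivative $\sqrt{t^2+1}-\operatorname{arsinh}(1/t)-t$ of $\sqrt{t^2+1}/t-1$. Its boundary term at $t=\delta$ contributes $k\bigl(\log(2/\delta)-1\bigr)+O(k\delta)$.
\item On $1\le x\le\delta k$, write
\[
\sqrt{\frac{x^2+k^2}{x^2-1}}=\frac{k}{\sqrt{x^2-1}}\bigl(1+O(x^2/k^2)\bigr).
\]
Integrating gives $k\,\operatorname{arcosh}(\delta k)+O(k\delta^2)+O(\delta k)$, and $k\,\operatorname{arcosh}(\delta k)=k\log(2\delta k)+O(1/k)$.
\end{itemize}
Adding the two pieces, the $\log\delta$ terms cancel and the sum is $k(\log k+2\log2-1)+O(k\delta)$.

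This last step is the main obstacle. With $\delta$ fixed, the leftover is $O(k\delta)$, which is not $O(1)$. To fix this, I would avoid a hard cutoff altogether and compare with the explicit model integrand $k/\sqrt{x^2-1}\cdot\sqrt{1+x^2/k^2}\cdot x/\sqrt{x^2}$-type expressions. Concretely, the plan is to estimate directly
\[
\int_1^\infty\Bigl(\sqrt{\tfrac{x^2+k^2}{x^2-1}}-\sqrt{\tfrac{x^2+k^2}{x^2}}\Bigr)\,dx=k\log2+O(1),
\]
and then to evaluate $\int_1^\infty\bigl(\sqrt{1+k^2/x^2}-1\bigr)\,dx$ in closed form, which gives $k(\log k+\log2-1)+O(1)$. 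The difficulty in the first integral is the uniform control of the difference near $x\sim k$, where the two expansions overlap. There the difference is $O(k/x^3)\cdot\sqrt{x^2+k^2}$, which integrates to $O(1)$ over $x\gtrsim1$ away from the endpoint, and the endpoint region contributes $k\log 2$. Substituting $I(1+k^2)=k(\log k-1+2\log2)+O(1)$ into $N_+(E)=4I(1+k^2)+O(1)$ gives \eqref{eq:weyl}.
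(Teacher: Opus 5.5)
Your argument is correct and follows the paper's proof. The steps match: monotonicity of the left-hand side of \eqref{BS} (the computation of Lemma~\ref{lem:BSmonotone}), counting half-integers to get $N_+(E)=2\bigl(2I(1+\frac{E}{4\pi^2})+\frac14+R(E)\bigr)+O(1)$, and then the large-$a$ behaviour of $I$. The only difference is in that last step. The paper quotes $I(a)=\frac12\sqrt a(\log a-2+4\log2)+1+o(1)$ from Connes--Moscovici (their formula (25)). You instead prove the weaker $I(1+k^2)=k(\log k-1+2\log2)+O(1)$ by hand, which is all the $O(1)$ Weyl law needs and makes the argument self-contained.

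Your final decomposition is the right one, and your closed-form evaluation of $\int_1^\infty(\sqrt{1+k^2/x^2}-1)\,dx$ is correct. However, the reason you give for
\[
\int_1^\infty\sqrt{x^2+k^2}\,\Bigl(\frac{1}{\sqrt{x^2-1}}-\frac1x\Bigr)\,dx=k\log2+O(1)
\]
is wrong as stated. For $x\geq2$ the integrand is comparable to $\sqrt{x^2+k^2}\,x^{-3}$, with no extra factor $k$. Even this integrates over $[X,\infty)$, $1\ll X\ll k$, to roughly $k/(4X^2)$, so the $k\log2$ comes from $x=O(1)$; the region $x\sim k$ is harmless.

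A clean proof goes as follows. First, $\int_1^\infty\bigl(\frac{1}{\sqrt{x^2-1}}-\frac1x\bigr)dx=\bigl[\operatorname{arcosh}x-\log x\bigr]_1^\infty=\log2$ exactly. So the integral equals $k\log2$ plus $\int_1^\infty(\sqrt{x^2+k^2}-k)\bigl(\frac{1}{\sqrt{x^2-1}}-\frac1x\bigr)dx$. To bound this remainder, use $0\leq\sqrt{x^2+k^2}-k\leq\min(x,x^2/(2k))$ together with $0\leq\frac{1}{\sqrt{x^2-1}}-\frac1x\lesssim x^{-3}$ on $[2,\infty)$. On $[1,2]$ the factor $\sqrt{x^2+k^2}-k=O(1/k)$ multiplies an integrable function. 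Together these give a remainder of $O(k^{-1}\log k)$.

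Your abandoned cutoff argument also works. The two $O(k\delta)$ terms are exactly $+k\delta$ (from the $-t$ in the antiderivative at $t=\delta$) and $-k\delta$ (from integrating $-1$ over $[1,\delta k]$), so they cancel. The genuine errors are $O(k\delta^2+k^{-1}\delta^{-2})$, and choosing $\delta=k^{-1/2}$ makes them $O(1)$.
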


The non-classical Weyl law \eqref{eq:weyl} was conjectured by Connes and Moscovici, who motivated it in \cite[\S5]{CM} by a heuristic phase space volume argument. We remark that Theorem \ref{thm:BSthm} gives more precise information than the Weyl law itself. To prove these results one cannot use classical results in Sturm--Liouville theory such as  \cite[(1.4)]{Heywood}, \cite[(1.4)]{NR}, \cite[Thm.~1]{AF}, as the hypotheses are not satisfied by the corresponding reduced Schr\"odinger operator at the relevant endpoints.

\begin{rem}\label{rem:Maslov-quarter}
The term $\frac14$ in the Bohr--Sommerfeld condition \eqref{BS} comes from the Maslov (Morse) index in the stationary phase computation in \eqref{eq:stationary-phase-match}. Neither this term nor the remainder $R(E)$ contributes to the leading Weyl asymptotics.
\end{rem}

\begin{rem}\label{rem:general-mu}
The value $4\pi^2$ is distinguished by Fourier transform commutation properties, see \cite{CM}, but it is not essential for our arguments. If $4\pi^2x^2$ is replaced by $\mu x^2$, $\mu>0$, and the conditions  at infinity are modified by replacing $2\pi$ with $\sqrt\mu$, the same proof applies.  The quantization rule becomes
\begin{align*}
\frac{\sqrt\mu}{\pi}\left[ I\left(1+\frac{E_n}{\mu} \right)-1 \right]+\frac14+R_\mu(E_n)
=n+O(E_n^{-\infty}),
\end{align*}
where $R_\mu(E)=-\frac{1}{16\pi}E^{-1/2}\log E+O(E^{-1/2})$ (with a $\mu$-dependent $O(E^{-1/2})$ remainder). Consequently, a repetition of the arguments in \S\ref{ss:finalproof} gives the more general Weyl law
\begin{align*}
N_{+,\mu}(E)=\frac{2\sqrt E}{\pi}
\biggl(\log\frac{\sqrt E}{\sqrt\mu}-1+2\log2\biggr)+O(1).
\end{align*}
\end{rem}

\begin{rem}\label{rem:cpo}
The $O(1)$ remainder in the Weyl law \eqref{eq:weyl} cannot be replaced by $c+o(1)$ for any constant $c$. Indeed, suppose that $N_+(E)=F(E)+c+o(1)$, where
\[
F(E)=4\frac{\sqrt{E}}{2\pi}\left(\log\left(\frac{\sqrt{E}}{2\pi}\right)-1+2\log 2\right).
\]
Let $E\gg1$ be an eigenvalue of $P_{\rm sa}$. Since the spectrum is discrete and the eigenvalues are simple, we may choose $0<\varepsilon_E\leq1$ such that $E$ is the only eigenvalue in $(E-\varepsilon_E,E+\varepsilon_E)$. Thus $N_+(E+\varepsilon_E)-N_+(E-\varepsilon_E)=1$. On the other hand, the assumed asymptotics and the mean value theorem give
\[
N_+(E+\varepsilon_E)-N_+(E-\varepsilon_E)
=F(E+\varepsilon_E)-F(E-\varepsilon_E)+o(1)
=O\left(\varepsilon_E\frac{\log E}{\sqrt E}\right)+o(1)
=o(1),
\]
as $E\to +\infty$, which is a contradiction.
\end{rem}

\subsection{Structure of the proof}\label{subsection:proof-structure}
Our approach has the advantage that it can handle the original problem directly and clarifies how microlocal analysis techniques can be applied in this singular setting. 
Our original motivation comes in fact from the relation between the Connes-Moscovici operator and Vasy's method \cite{VasyKerrdS, ZwVa}. 
Our proof is robust enough to be extended to more general, higher order operators (with the exception of a Wronskian argument, which can be replaced by a more robust, albeit complicated, technique) 
and different boundary conditions.

\medskip

We first introduce a small semiclassical parameter $h>0$ by setting $E=h^{-2}$ and define $P(h):=h^2(P-h^{-2})$. The zero set $\{p_h=0\}$ of the $h$-dependent symbol introduced in \S\ref{subsection:semiclassical-dynamics} has four components $\Lambda_{\sigma_1,\sigma_2}$ labeled by $\sigma_1,\sigma_2\in \{+,-\}$, extending from spatial infinity
\[
\sigma_1x\to\infty,\quad \xi\to2\pi h\sigma_2,
\]
where solutions have oscillatory asymptotics, to the degeneracy at $x=\sigma_1$, where $\sigma_2\xi\to+\infty$ and the Hamilton flow becomes radial.  We stress that although each $\Lambda_{\s_1,\s_2}$ is Lagrangian for fixed $h$, the
$h$-dependent family is neither a fixed Lagrangian of the standard
semiclassical calculus nor does the family extend smoothly as a semiclassical scattering
Lagrangian at spatial infinity; see
\S\ref{subsection:semiclassical-dynamics}.

\medskip

The argument is organized as follows.

\medskip

 First, on each component $\Lambda_{\sigma_1,\sigma_2}$, we construct a ``model quasimode'' (meaning a solution modulo $O(h^\infty)$ that is not necessarily in the domain of $P_{\rm sa}$). More precisely, near spatial infinity we use to that end a WKB form, but near the degeneracy we need instead a local Lagrangian distribution representation better adapted to the radial behavior. We then use a stationary phase argument on an intermediate interval to
 connect the two representations and obtain a connection formula involving
 a phase shift and a relative $O(h\log(1/h))$ correction.

   We remark that due to the non-compactness of $\Lambda_{\sigma_1,\sigma_2}$, standard WKB estimates on fixed compact sets do \textit{not} provide uniform control  along the whole component. We carry out the WKB transport estimates in a special two-scale symbol class with logarithmic weights, and then  comparison with the Lagrangian distribution  leads  us to introduce a renormalized action which replaces the formal divergent integral ``$\int_{\Lambda_{\sigma_1,\sigma_2}}\xi\,dx$''. The borderline $|x|^{-1}$ decay of the transport terms on $1\ll |x|\ll h^{-1}$ produces successive powers of $\log(1/h)$ and, in particular, the $h\log(1/h)$ correction.

\medskip

 Next, the crucial step is to prove completeness of the four model quasimodes. Arguments based on the Wronskian allow us to express any exact eigenfunction as a linear combination of model quasimodes modulo a remainder that is rapidly decreasing at spatial infinity and $O_{C^\infty}(h^\infty)$ on compact subsets of $\mathbb R\setminus\{\pm1\}$. To tackle the remaining problem at $x=\pm 1$ we first use a below-threshold radial estimate, in the form of \cite[Thm.~E.54]{DZbook}, to propagate $O(h^\infty)$ smallness of remainders in the sense of $H^{1/2-0}_h$ Sobolev norms. A  module regularity argument for fixed $h$, in the spirit of Haber--Vasy \cite{HV} and Dyatlov--Zworski \cite{DZ}, followed by a use of transport equations near fiber infinity and a bootstrap argument for the behaviour in $h$, then improves this to $O(h^\infty)$ in a stronger sense, namely, at the level of boundary data.

\medskip

 Quasimodes satisfying the self-adjoint boundary conditions are then constructed as linear combinations of the ``model quasimodes''. Imposing the conditions amounts to solving a finite-dimensional system, and equating the corresponding determinant to zero gives rise to the Bohr--Sommerfeld quantization condition.  Finally, standard arguments allow to go from quasimode analysis to statements on the spectrum.

\subsection{Bibliographical remarks} We comment below on various related results in the literature, either on the spectral theory of the operator $P_{\rm sa}$ or on non-classical Weyl laws.  

\subsubsection{The Connes--Moscovici operator}\label{subsection:CM-conventions}

We write $D_x=i^{-1}\partial_x$. Connes and Moscovici show that $P$ has a self-adjoint  extension $P_{\rm sa}$ uniquely determined by the property of commutation with both the multiplication operator ${\mathbf{1}}_{[-1,1]}(x)$ and the Fourier multiplier ${\mathbf{1}}_{[-2\pi,2\pi]}(D_x)$  \cite{CM}. They also prove that  $P_{\rm sa}$ has discrete spectrum, unbounded in both directions. Furthermore, it is shown in \cite{CM} that the eigenfunctions of $P_{\rm sa}$ with positive eigenvalues are $L^2$ eigenfunctions of $P$ which lie in the \emph{Sonin space} 
\begin{equation}\label{eq:sonin}
\Ker {\mathbf{1}}_{[-1,1]}(x)\cap \Ker {\mathbf{1}}_{[-2\pi,2\pi]}(D_x).
\end{equation}

\begin{rem}Note that the operator considered in \cite{CM} is $W_{\rm sa}=-P_{\rm sa}$, hence the terminology there refers instead to \emph{negative}  eigenvalues. Note also that the parameter denoted by $E$ in \cite{CM} is the square root of our eigenvalue variable.\end{rem}


\medskip

The work of Connes and Moscovici is motivated by the problem of finding a spectral realization of the non-trivial zeros of the Riemann zeta function in the spirit of the Hilbert--P\'olya conjecture. We do not address this problem here: although Corollary~\ref{cor:Weyllaw} confirms the Weyl law predicted by \cite{CM}, we stress that the comparison with the zeta zeros that one can infer at this point concerns only the leading asymptotics of their counting function, as given by the Riemann--von Mangoldt formula. Our result neither identifies individual (square roots of)  eigenvalues of $P_{\rm sa}$ with the zeta zeros nor reproduces the lower-order oscillations in the zero-counting function. A spectral realization of the zeta zeros, as sought in the broader programme of Connes and collaborators \cite{CCM, CM}, would also have to account for these oscillations.

Several recent works develop complementary aspects of the analysis. The Sonin space occurs independently in the approach of Connes and Consani to Weil positivity \cite{ConnesConsaniWeil}, which has further motivated considering the prolate operator $P_{\rm sa}$ in a number-theoretical context. In a subsequent work \cite{CCM}, Connes--Consani--Moscovici introduce semilocal analogues of the prolate operator $P_{\rm sa}$ and of the Sonin space, incorporating in addition local components associated with finitely many primes. They show that the resulting generalized Sonin spaces remain isomorphic when further primes are added, and realize them as Hilbert spaces of entire functions. Ramis--Richard-Jung--Thomann develop an alternative complex-analytic description of the eigenvalues of $P_{\rm sa}$ that do not arise from the restricted problem $P|_{[-1,1]}$ \cite{RRT}. More precisely, they construct spectral determinants, characterize these eigenvalues through decay properties of solutions analytically continued to the complex plane, prove that they are all positive (thus, they are \emph{negative} eigenvalues in the terminology for $W_{\rm sa}=-P_{\rm sa}$), and give a fast method for their numerical computation.  Our Bohr--Sommerfeld formula complements this characterization by
providing high-energy asymptotics (which prove in particular that the bound on determinant orders in \cite[Prop.~16]{RRT} is sharp).

\subsubsection{Degenerate operators and logarithmic Weyl laws}\label{subsection:logarithmic-weyl-laws}

In this  subsection,  $N(E)$ denotes the number of eigenvalues, counted with multiplicity, in $[0,E]$ for the self-adjoint operator specified in the surrounding sentence. We write $N_+(E)$ to mean the number of positive eigenvalues in $(0,E]$ when we want to stress that the operator is not bounded below.

Recall that for a positive elliptic differential operator $A$ of order $m$ on a closed $d$-dimensional manifold $X$, the classical Weyl law reads
\begin{align*}
N(E)\sim(2\pi)^{-d}\vol\{(x,\xi)\in T^*X\mid a_m(x,\xi)\leq E\},
\end{align*}
where $a_m$ is the principal symbol of $A$. The homogeneity of $a_m$ in $\xi$ is  then responsible for  the familiar power law $E^{d/m}$. If the highest-order coefficients degenerate, however, the formal phase-space volume may  diverge, the leading contribution may concentrate near a singular stratum, and additional powers or logarithms may occur. 

There is by now a broad, but necessarily heterogeneous, theory. Strong power degeneracies at a boundary were studied by Vulis--Solomyak \cite{VulisSolomyak}; multiple characteristics and hypoelliptic operators were studied by M\'etivier and Menikoff--Sj\"ostrand \cite{Metivier,MenikoffSjostrand}, with a wider microlocal perspective given by Ivrii in \cite{Ivrii}. Non-compact classically allowed regions lead to the non-classical laws of Simon and Aramaki--Nurmuhammad \cite{Si,AN}, see below. More recent work treats sub-Riemannian and non-equiregular geometries \cite{CHT}, almost-Riemannian Laplacians \cite{BPS}, singular Riemannian manifolds \cite{CPR}, singular metric-measure spaces \cite{DHPW}, and boundary degeneracies with a variable exponent \cite{CDT}.

A recent work by Colin de Verdi\`ere studies sign-changing Laplace operators defined by an elliptic transmission problem across a smooth hypersurface \cite{CdVsign}. The author proves separate classical Weyl laws for the positive and negative spectra and constructs modes concentrated near the interface. The sign-changing coefficient is a feature shared with the prolate operator $P$, although both the geometric setting and the spectral mechanism are actually quite different.

Seemingly closer to the one-dimensional setting considered here, Hartmann--Lesch-\-Vertman \cite{HLV} study Sturm--Liouville operators on a half-line of the form $-\partial_x(x^2\partial_x)+\mu^2x^2-\frac14+V(x)$ and, among other results, prove the logarithmic Weyl law $N(E)\sim (2\pi)^{-1}\sqrt E\log E$. The sign of the quadratic potential is however \textit{opposite} to that of $P$, and $x^2$ has \textit{no zero} on the interval under consideration. After a logarithmic change of variables the model reduces to $-\partial_t^2+\mu^2e^{2t}$, so unlike in our problem, infinity is confining and limit point with a distinguished decaying solution.

Logarithmic factors are especially characteristic of borderline regimes, but they arise from different mechanisms. For $\alpha>0$, Simon  \cite{Si}  considers the Schr\"odinger operator $-\Delta+|xy|^\alpha$ on $L^2(\mathbb R^2)$ and proves that its counting function satisfies
\begin{align*}
N(E)=\frac1\pi E^{\frac{\alpha+1}{\alpha}}\log E
+o\bigl(E^{\frac{\alpha+1}{\alpha}}\log E\bigr),
\end{align*}
 For the Dirichlet Laplacian on the infinite-volume horn $\Omega=\{(x,y)\in\mathbb R^2\mid |xy|\leq1\}$, Aramaki--Nurmuhammad show that the counting function satisfies
\begin{align*}
N(E)=\frac1\pi E\log E+o(E\log E),
\end{align*}
with related higher-dimensional results \cite{AN}. Fox--Strichartz consider the wave operator $\partial_x^2-\partial_y^2$ on the flat two-torus $\mathbb{T}^2$ and show 
\begin{align*}
N_+(E)=E\log E+(2\gamma-1)E+O(E^{1/2})
\end{align*}
by an arithmetic lattice-counting argument \cite[Thm.~3.2]{FS}. For the Laplacian on a compact two-dimensional singular metric-measure space with a synthetic lower Ricci-curvature bound, constructed by Dai--Honda--Pan--Wei, one has
\begin{align*}
N(E)=\frac1{4\pi}E\log E+o(E\log E)
\end{align*}
\cite{DHPW}. In the problem with variable boundary degeneracy considered by Colin de Verdi\`ere--Dietze--Tr\'elat \cite{CDT}, the leading regime is determined by whether the maximal degeneracy exponent lies below, at, or above a critical value; under a Morse--Bott hypothesis on its maximum set, the asymptotics acquire logarithmic factors whose form depends on the codimension of that set.

\medskip

From the microlocal point of view, the behavior near $x=\pm1$
is closest to Vasy's analysis of radial sets conormal to a hypersurface in asymptotically hyperbolic and Kerr--de Sitter problems
\cite{VasyKerrdS,VasyPoisson}; see also 
\cite{ZwVa}. The module regularity argument that we use is a simple version of  Haber and Vasy's \cite{HV}, while the passage from radial estimates to a conormal oscillatory representation and  the subsequent analysis of the transport equations follows the pattern of Dyatlov--Zworski \cite{DZ}. The $h$-dependent symbol classes that we use do not seem to have entered the literature so far, we remark however that there are similarities with Hintz's semiclassical cone calculus and its second microlocal refinement
\cite{HintzCone,HintzConePropagation}. This suggests an alternative approach based on blowing up a corner of compactified
semiclassical phase space, which we do not pursue here.

\subsection{Organization of the paper}

In \S\ref{section:geometry} we introduce preliminaries on the semiclassical analysis of $P(h)$ and boundary conditions. In \S\ref{section:models} we construct the four model quasimodes and compute the connection formula. In \S\ref{section:completeness} we prove the completeness of model quasimodes using  arguments based on the Wronskian, radial estimates, conormal regularity, and transport equations at fiber infinity. Then in \S\ref{section:Mainproof} we impose the boundary conditions, we prove spectral localization and simplicity, and we conclude the Weyl law. Appendix \ref{app:saP} reviews the description of self-adjoint extensions of $P$ on the real line.   Finally, the symbolic estimates used in the quasimode construction are collected in Appendix \ref{appendix:symbolic-estimates}.

\section{Preliminaries}\label{section:geometry}

\subsection{Semiclassical dynamics}\label{subsection:semiclassical-dynamics}

To study the eigenvalue problem for $E\to\infty$, we introduce a small semiclassical parameter $h>0$ by setting $E=h^{-2}$ and write
\begin{align}\label{eq:P(h)def}
P(h)=h^2(P-h^{-2})=-h^2\partial_x(x^2-1)\partial_x-4\pi^2h^2x^2-1.
\end{align}
We denote by
\begin{align}\label{eq:full-symbol}
p_h(x,\x)=(x^2-1)\x^2-4\pi^2h^2x^2-1
\end{align}
the real $h$-dependent symbol associated to $P(h)$ that will be used later in the global eikonal equation. On compact subsets of the $x$-line it differs by $O(h^2)$ from the semiclassical principal symbol $(x^2-1)\x^2-1$, but the additional term $-4\pi^2h^2x^2$ becomes relevant at the scale $|x|\sim h^{-1}$.

The zero set $\{p_h=0\}$ has four connected components
\begin{align}\label{eq:characteristic-components}
\La_{\s_1,\s_2}
=\{(x,\x)\mid p_h(x,\x)=0,\ \s_1x>1,\ \s_2\x>0\},
\quad \s_1,\s_2\in\{\pm\}.
\end{align}
On $\{p_h=0\}$ we have
\begin{align*}
(x^2-1)\x^2=1+4\pi^2h^2x^2,
\end{align*}
 so at spatial infinity in $\Lambda_{\sigma_1,\sigma_2}$, $\s_1x\to\infty$ and $\x\to2\pi h\s_2$, and on the other hand, when approaching the degeneracy  $x\to\s_1$ we have  $\s_2\x\to+\infty$. 
 
 \begin{rem}
 For each fixed $h>0$, $\Lambda_{\s_1,\s_2}$ is a smooth exact
 Lagrangian curve in $T^*\mathbb R$. The family is nevertheless not a
 fixed global Lagrangian for the standard semiclassical calculus, nor
 does it extend as a smooth semiclassical scattering Lagrangian family
 at spatial infinity. Indeed, with
 $r=(\s_1x)^{-1}$ and scattering fiber coordinate
 $\eta=-\s_1\x$, one has
 \[
 \eta
 =-\s_1\s_2
 \sqrt{\frac{r^2+4\pi^2h^2}{1-r^2}},
 \]
 and the leading term $\sqrt{r^2+4\pi^2h^2}$ is not smooth at the corner
 $h=r=0$. 
 \end{rem}

Following \cite[Appendix E]{DZbook}, let $\overline{T^*\mathbb R}$ be the
fiber-radial compactification of $T^*\mathbb R$, whose boundary
$\partial\overline{T^*\mathbb R}$ represents fiber infinity. The two boundary
points in each fiber are labeled by
$\widehat\x:=\x/|\x|\in\{\pm1\}$. We denote the endpoint of $\Lambda_{\s_1,\s_2}$ at fiber infinity by 
\begin{align}\label{eq:radial-set}
\mathcal L_{\s_1,\s_2}
:=\overline{\La_{\s_1,\s_2}}\cap\partial\overline{T^*\mathbb R}
=\{x=\s_1,\ \widehat\x=\s_2\},
\end{align}
where the closure is taken in $\overline{T^*\mathbb R}$. In the boundary chart
$\s_2\x>0$, the function
\[
\rho=(\s_2\x)^{-1}
\]
is a defining function of $\partial\overline{T^*\mathbb R}$, and
$\mathcal L_{\s_1,\s_2}=\{x=\s_1,\rho=0\}$.

The Hamilton vector field is
\begin{align}\label{eq:Hamilton-vector-field}
H_{p_h}
=2(x^2-1)\x\,\partial_x
 -(2x\x^2-8\pi^2h^2x)\partial_\x.
\end{align}
In the coordinates $(x,\rho)$, its rescaling by $\rho$ reads
\begin{align}\label{eq:radial-linearization}
\rho H_{p_h}
=2\s_2(x^2-1)\partial_x
 +2\s_2x\rho(1-4\pi^2h^2\rho^2)\partial_\rho.
\end{align}
Thus $\rho H_{p_h}$ extends smoothly to the fiber boundary and vanishes at
$\mathcal L_{\s_1,\s_2}$. Its linear part there is
\[
4\s_1\s_2(x-\s_1)\partial_x
 +2\s_1\s_2\rho\partial_\rho.
\]
Consequently, $\mathcal L_{\s_1,\s_2}$ is a radial source when
$\s_1\s_2=1$ and a radial sink when $\s_1\s_2=-1$, in the terminology of
\cite[Def.~E.50]{DZbook}. The direction corresponding to moving  forward along the flow is indicated by an arrow in Figure \ref{fig:phase-portrait}.

\begin{figure}[t]
  \centering
 \begin{tikzpicture}[>=Latex,scale=1.08,every node/.style={font=\small},
   axis/.style={black!70,line width=0.55pt},
   boundary/.style={black!45,line width=0.45pt},
   char/.style={line width=0.95pt},
   guide/.style={black!35,dashed,line width=0.4pt},
   flow/.style={line width=0.6pt,postaction={decorate},
     decoration={markings,mark=at position 0.58 with {\arrow{Latex[length=2.0mm,width=1.2mm]}}}},
   radial/.style={circle,fill=black,inner sep=1.3pt}]
   \def\c{0.33}\def\Y{2.25}\def\X{4.45}
   \pgfmathsetmacro{\ya}{\Y*atan(\c)/90}
   \newcommand{\prof}{\Y*(1-atan(sqrt(((\x*\x)-1)/(1+\c*\c*\x*\x)))/90)}
 
   \draw[boundary] (-\X,\Y) -- (\X,\Y); \draw[boundary] (-\X,-\Y) -- (\X,-\Y);
   \node[anchor=west,black!55] at (2.45,\Y+0.23) {$\partial\overline{T^*\mathbb R}$};
   \node[anchor=west,black!55] at (2.45,-\Y-0.23) {$\partial\overline{T^*\mathbb R}$};
 
   \draw[axis,->] (-\X,0) -- (\X+0.12,0) node[below right,yshift=1.1mm] {$x$};
   \draw[axis] (0,-\Y) -- (0,\Y);
 
   \foreach \s in {-1,1} {\draw[guide] (\s,-\Y) -- (\s,\Y); \node[below=1pt] at (\s,0) {$\s$};}
   \draw[guide] (-\X+0.15,\ya) -- (\X-0.15,\ya); \draw[guide] (-\X+0.15,-\ya) -- (\X-0.15,-\ya);
 
   \foreach \a/\b in {1/4.15,-4.15/-1}{
     \draw[char] plot[domain=\a:\b,samples=180,smooth] ({\x},{\prof});
     \draw[char] plot[domain=\a:\b,samples=180,smooth] ({\x},{-\prof});
   }
 
   \draw[flow] plot[domain=1.34:2.75,samples=80] ({\x},{\prof});
   \draw[flow] plot[domain=2.75:1.34,samples=80] ({\x},{-\prof});
   \draw[flow] plot[domain=-2.75:-1.34,samples=80] ({\x},{\prof});
   \draw[flow] plot[domain=-1.34:-2.75,samples=80] ({\x},{-\prof});
 
   \node[radial,label={[above left=-1pt]{$\mathcal L_{-+}$}}] at (-1,\Y) {};
   \node[radial,label={[below left=-1pt]{$\mathcal L_{--}$}}] at (-1,-\Y) {};
   \node[radial,label={[above right=-1pt]{$\mathcal L_{++}$}}] at (1,\Y) {};
   \node[radial,label={[below right=-1pt]{$\mathcal L_{+-}$}}] at (1,-\Y) {};
 
   \node at (2,1.3) {$\Lambda_{++}$}; \node at (2,-1.3) {$\Lambda_{+-}$};
   \node at (-2,1.3) {$\Lambda_{-+}$}; \node at (-2,-1.3) {$\Lambda_{--}$};
 
 
   \node[anchor=west,black!55] at (4.2,\ya) {$\xi=2\pi h$};
   \node[anchor=west,black!55] at (4.2,-\ya) {$\xi=-2\pi h$};
      \node[anchor=west,black!55] at (-6,\ya) {$\phantom{\xi=2\pi h}$};
      \node[anchor=west,black!55] at (-6,-\ya) {$\phantom{\xi=-2\pi h}$};
 \end{tikzpicture}
  \caption{%
    Schematic phase portrait of the zero set $\{p_h=0\}$ in the fiber-radial compactification $\overline{T^*\mathbb R}$. The four
    connected components $\Lambda_{\sigma_1,\sigma_2}$ lie over
    $|x|>1$ and approach, as $x\to\pm\infty$, the limiting
    levels $\xi=2\pi h$ or $\xi=-2\pi h$. The radial points
    $\mathcal L_{\sigma_1,\sigma_2}$ lie over $x=\pm1$ at fiber infinity
    $\partial\overline{T^*\mathbb R}$. }
  \label{fig:phase-portrait}
\end{figure}

\subsection{Renormalized action}\label{subsection:renormalized-action}

In the standard one-dimensional Bohr--Sommerfeld setting, suppose that the energy surface $\{q=E\}$ has a compact connected component $\Lambda$ on which $dq\neq0$. Then $\Lambda$ is a periodic Hamiltonian orbit. On any portion of $\Lambda$ that is locally a graph $\xi=\xi(x)$, a WKB ansatz has the form $a(x;h)e^{i\varphi(x)/h}$, and the eikonal equation implies $\varphi'(x)=\xi(x)$. The phase accumulated over one period is therefore the classical action $\int_{\Lambda}\xi\,dx$. Requiring the local WKB solutions to match after continuation around $\Lambda$ gives the Bohr--Sommerfeld condition, with the Maslov index providing the first phase correction; see \cite[\S\S5--7]{VuNgocWKB} for a modern review and \cite{C1,ILR} for other approaches.

\medskip

In the present problem, the components $\Lambda_{\sigma_1,\sigma_2}$ are non-compact and the usual action integral along each component diverges. For instance, on $\La_{+,+}$, the fiber variable is
\begin{align*}
\x(x)=\sqrt{\frac{1+4\pi^2h^2x^2}{x^2-1}},
\end{align*}
so $\x(x)-2\pi h=O_h(x^{-2})$ as $x\to\infty$. Thus the ordinary action integral along $\La_{+,+}$ diverges linearly, whereas subtracting the limiting value gives the convergent integral
\begin{align}\label{eq:renormalized-action}
\int_1^\infty\biggl(\sqrt{\frac{1+4\pi^2h^2x^2}{x^2-1}}-2\pi h\biggr)dx
=2\pi h I\biggl(1+\frac{1}{4\pi^2h^2}\biggr).
\end{align}
We call this the \emph{renormalized action} associated with $\La_{+,+}$. The corresponding quantities on the other components are given by the same formula with the appropriate sign.

\subsection{Notation}
 For $k\in\mathbb R$, $b\in S^k$ means that $b\in C^\infty(\mathbb R\times[0,1]_h)$ and
\begin{align*}
|\partial_\xi^\alpha b(\xi,h)|\lesssim\langle\xi\rangle^{k-\alpha},
\quad \alpha\in\mathbb N_0,
\end{align*}
uniformly in $\xi$ and $h$. We use the same notation on subsets of the $\xi$-line. We write $f=O_{\mathcal S(\Omega)}(h^\infty)$ if, for every $N\geq0$ and $\alpha,\beta\in\mathbb N_0$,
\begin{align*}
\sup_{x\in\Omega}\langle x\rangle^\beta|\partial_x^\alpha f(x;h)|\leq C_{N\alpha\beta}h^N.
\end{align*}
The notation $O(E^{-\infty})$ has the analogous meaning. For non-negative quantities $A=A(h)$ and $B=B(h)$, we write $A\sim B$ if $C^{-1}B\leq A\leq CB$ for some constant $C>0$ independent of $h$; when used for a formal series, $\sim$ has its usual asymptotic-expansion meaning.

\subsection{Boundary data}\label{subsection:boundary-data}

For later reference, we write the domain $D(P_{\rm sa})$ of $P_{\rm sa}$ explicitly. We set $u_{\rm even}(x)=\frac{1}{2}(u(x)+u(-x))$ and $u_{\rm odd}(x)=\frac{1}{2}(u(x)-u(-x))$. Then $D(P_{\rm sa})$ consists of elements of the maximal domain
\[
D_{\rm max}(P)=\{u\in L^2(\mathbb R)\mid Pu\in L^2(\mathbb R)\}
\]
which satisfy
\begin{align}
&\lim_{x\to\pm1}(x\mp1)\partial_xu(x)=0,\label{eq:bd1}\\
&\lim_{|x|\to\infty}\bigl(x\sin(2\pi x)\partial_xu_{\rm even}(x)
 -(2\pi x\cos(2\pi x)-\sin(2\pi x))u_{\rm even}(x)\bigr)=0,\label{eq:bd2}\\
&\lim_{|x|\to\infty}\bigl(x\cos(2\pi x)\partial_xu_{\rm odd}(x)
 +(2\pi x\sin(2\pi x)+\cos(2\pi x))u_{\rm odd}(x)\bigr)=0.\label{eq:bd3}
\end{align}
This direct description is more useful for our purposes than the Sonin space characterization outlined in the introduction.

Let $u\in D_{\rm max}(P)$. Boundary data are required at the two zeros $x=\pm1$ of the leading coefficient and at the two ends of the real line. Near $x=\s\in\{\pm\}$, the identity
\[
\partial_x\bigl((x^2-1)\partial_xu\bigr)
=-Pu-4\pi^2x^2u
\]
shows that $w=(x^2-1)\partial_xu$ belongs to $H^1_{\rm loc}$ and hence has a continuous representative. Since $x^2-1$ has a simple zero at $x=\s$, subtracting the logarithmic term determined by $w(\s)$ and using the one-dimensional Hardy inequality gives
\begin{align}\label{eq:max-domain-boundary-form}
u(x)=a_\s^{(0)}\log|x-\s|+a_\s^{(1)}H(x-\s)
+u_\s^{\rm reg}(x),
\quad u_\s^{\rm reg}\in H^1_{\rm loc}.
\end{align}
Condition \eqref{eq:bd1} is equivalent to $a_\s^{(0)}=0$.

At infinity, for fixed $z\in\mathbb C$, a basis of solutions of $(P-z)u=0$ on either exterior half-line has leading behavior
\[
\frac{\sin(2\pi x)}{|x|}+O_z(|x|^{-2}),
\quad
\frac{\cos(2\pi x)}{|x|}+O_z(|x|^{-2}),
\]
see \cite{CM}. Thus \eqref{eq:bd2} selects the sine-type asymptotic in the even sector, whereas \eqref{eq:bd3} selects the cosine-type asymptotic in the odd sector.

\begin{lem}\label{lem:asymxto1}
Let $\s_2\in\{\pm\}$ and $b\in S^{-1}$. Suppose that $b(\x,h)=|\x|^{-1}+O(|\x|^{-2})$ as $\s_2\x\to \infty$ and $\supp b(\cdot,h)\subset \{\x\mid \s_2 \x>0\}$. Then the function
\[
v_{\pm}(x)=\frac{1}{\sqrt{2\pi h}}\int_{\re}b(\x,h)e^{\frac{i}{h}(x\mp 1)\x}d\x
\]
satisfies $\lim_{x\to\pm 1}(x\mp 1)\pa_xv_{\pm}(x)=-1/\sqrt{2\pi h}$.
\end{lem}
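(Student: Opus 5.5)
Set $y=x\mp1$ and $t=y/h$. Differentiating under the integral (justified in the distributional sense, then pointwise for $y\neq0$) gives
\[
y\,\pa_x v_\pm(x)=\frac{1}{\sqrt{2\pi h}}\int_{\re} b(\x,h)\,\frac{i y\x}{h}\,e^{\frac{i}{h}y\x}\,d\x .
\]
This is the Fourier transform of $\x b$, which is a symbol of order $0$, evaluated at frequency $-t$ and multiplied by $t$. The task is therefore to show that $t\,\widehat{(\x b)}(-t)$, understood in the oscillatory sense, tends to $-1$ as $t\to0$, which corresponds to $y\to0$ for fixed $h$.

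The first step is to split $b=b_0+b_1$. Here $b_0(\x)=\chi(\s_2\x)\,|\x|^{-1}$, where $\chi\in C^\infty$ vanishes near $(-\infty,1]$ and equals $1$ on $[2,\infty)$. The remainder $b_1$ then satisfies $b_1=O(\langle\x\rangle^{-2})$ and $\x b_1\in S^{-1}$. For the remainder, integrate by parts once in $\x$, using $\frac{iy}{h}e^{iy\x/h}=\pa_\x e^{iy\x/h}$:
\[
\int \x b_1\,\tfrac{iy}{h}e^{iy\x/h}\,d\x=-\int \pa_\x(\x b_1)\,e^{iy\x/h}\,d\x .
\]
There are no boundary terms, because $\x b_1\to0$ at $\pm\infty$. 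Since $\pa_\x(\x b_1)=O(\langle\x\rangle^{-2})$ is integrable, dominated convergence gives the limit $-\int\pa_\x(\x b_1)\,d\x=0$ as $y\to0$. This uses $\x b_1\to0$ at both ends and that $b_1$ is compactly supported on the side $\s_2\x<0$.

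For the main term, $\x b_0=\s_2\chi(\s_2\x)$. Integrating by parts the same way gives
\[
-\s_2\int \pa_\x\bigl(\chi(\s_2\x)\bigr)\,e^{iy\x/h}\,d\x
=-\int\chi'(\s_2\x)\,e^{iy\x/h}\,d\x .
\]
There is a boundary term at $\s_2\x=+\infty$, where $\chi=1$. It is handled by inserting a convergence factor $e^{-\epsilon|\x|}$, or equivalently by interpreting the integral as an oscillatory integral. Because $y\neq0$, the term $e^{iy\x/h}$ oscillates and the boundary contribution vanishes in that sense. As $y\to0$ the expression tends to $-\int\chi'(s)\,ds=-1$, independently of $\s_2$. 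Combining the two pieces and restoring the prefactor gives the limit $-1/\sqrt{2\pi h}$.

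The main obstacle is making the differentiation under the integral and the integrations by parts rigorous, since $b\in S^{-1}$ is not integrable. I would handle this by inserting $e^{-\epsilon\x^2}$, carrying out the computation, and letting $\epsilon\to0$ for fixed $y\neq0$. The $\epsilon$-uniform bounds come from one more integration by parts using the nonvanishing phase derivative $y/h$. The only remaining point is to check that the limit $y\to0$ commutes with the constructions above, which is plain dominated convergence once everything is written against the integrable density $\pa_\x(\x b)$.
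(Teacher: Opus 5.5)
Your proposal is correct and is essentially the paper's argument: both integrate by parts in $\xi$ using $\frac{iy}{h}e^{iy\xi/h}=\partial_\xi e^{iy\xi/h}$, then let $y\to0$ by dominated convergence to obtain $-\frac{1}{\sqrt{2\pi h}}[\xi b]_{-\infty}^{\infty}=-\frac{1}{\sqrt{2\pi h}}$; your splitting $b=b_0+b_1$ only makes explicit the boundary term at $\sigma_2\xi=+\infty$, which the paper absorbs into the oscillatory-integral interpretation. One small caveat: the pointwise hypothesis $b-|\xi|^{-1}=O(|\xi|^{-2})$ does not by itself give $\xi b_1\in S^{-1}$, but Landau's inequality on dyadic intervals, applied using $\xi b_1\in S^{0}$ and $\xi b_1=O(\langle\xi\rangle^{-1})$, yields $\partial_\xi(\xi b_1)=O(\langle\xi\rangle^{-3/2})$, which is all your dominated convergence step needs (and the paper's one-line limit tacitly uses the same integrability).
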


\begin{proof}
By $(x\mp 1)e^{\frac{i}{h}(x\mp 1)\x}=hD_{\x}e^{\frac{i}{h}(x\mp 1)\x}$ and integration by parts, we have
\begin{align*}
\lim_{x\to \pm 1}(x\mp 1)\pa_xv_{\pm}(x)
&=-\frac{1}{\sqrt{2\pi h}}\lim_{x\to \pm 1}\int_{\re}\pa_{\x}(\x b(\x,h))e^{\frac{i}{h}(x\mp 1)\x}d\x\\
&=-\frac{1}{\sqrt{2\pi h}}\int_{\re}\pa_{\x}(\x b(\x,h))d\x\\
&=\left[-\frac{1}{\sqrt{2\pi h}}\x b(\x,h)\right]_{\x=-\infty}^{\x=\infty}=\frac{-1}{\sqrt{2\pi h}}.
\end{align*}
\end{proof}

\section{Model quasimodes and connection formula}\label{section:models}\label{section:EVnearBS}

\subsection{Local quasimode uniqueness}

We write $g(x)=x^2-1$, so
\begin{align*}
P(h)=-h^2\pa_x(g(x)\pa_x)-4\pi^2h^2x^2-1.
\end{align*}

We use the terminology of \cite[Defs.~E.35--E.36]{DZbook}. If $I\subset \re$ is an open interval, a family $u=u(h)\in\mathcal D'(I)$, $0<h<h_0$, is $h$-tempered if, for every $\chi\in C_{\rm c}^\infty(I)$, there are $C,N>0$ such that
\begin{align*}
\|\chi u\|_{H_h^{-N}(I)}\leq Ch^{-N}.
\end{align*}
Let $\Psi_h^m(I)$ denote the properly supported local semiclassical pseudodifferential calculus on the open interval $I$, obtained by localizing the usual uniform calculus on $\re$; see \cite[\S4.1]{Z} and \cite[\S E.1.2]{DZbook}. For $A\in\Psi_h^m(I)$, we denote its semiclassical principal symbol by $\sigma_{{\rm pr},h}(A)$. For an $h$-tempered family, $(x_0,\xi_0)\notin\WF_h(u)$ if there exists $A\in\Psi_h^0(I)$, elliptic at $(x_0,\xi_0)$, such that $Au=O_{C^\infty}(h^\infty)$.

We will need the following local uniqueness statement to match different quasimode constructions.

\begin{thm}\label{thm:WKBuniquethm}
Let $I\Subset (-\infty,-1)\cup (1,\infty)$ be connected and $\s\in\{\pm\}$. Suppose that $u=u(h),w=w(h)\in C^\infty(I)$ are $h$-tempered and satisfy
\begin{align*}
P(h)u=O_{C^{\infty}(I)}(h^{\infty}),\quad P(h)w=O_{C^{\infty}(I)}(h^{\infty}),\quad \WF_h(u)\cup\WF_h(w)\subset \{\s\x>0\}
\end{align*}
and $u(x_0)=w(x_0)+O(h^{\infty})$ at a point $x_0\in I$. Then we have $u(x)=w(x)+O_{C^{\infty}(I)}(h^{\infty})$.
\end{thm}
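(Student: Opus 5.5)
The proof will construct on $I$ a single WKB quasimode $e$ with microsupport in $\{\s\x>0\}$. We will then show that every tempered quasimode with this wavefront condition is $c(h)e+O(h^\infty)$ with a scalar $c(h)$, and finally use $x_0$ to fix $c(h)$. On $\overline I$ we have $g=x^2-1\neq0$, so $P(h)$ is a semiclassical elliptic-in-$x$ second order operator with principal symbol $g(x)\x^2-1$. Its characteristic set over $I$ consists of the two disjoint smooth graphs $\x=\pm\x_0(x)$, where $\x_0=|g|^{-1/2}$ if $g>0$. On $I\subset\{|x|>1\}$ we do have $g>0$, and the extra term $-4\pi^2h^2x^2$ is $O(h^2)$ on compact sets, so it is a lower order perturbation.

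\textbf{Step 1 (WKB basis).} With $\f'=\s\x_0$, solve the transport equations as usual. This gives $e_\s=a(x;h)e^{i\f(x)/h}$ with $a\sim\sum h^ja_j$, $a_0=g^{-1/4}\neq0$, and $P(h)e_\s=O_{C^\infty(I)}(h^\infty)$ after Borel summation; this is standard since $I$ is relatively compact. Then $\WF_h(e_\s)$ lies in $\{\x=\s\x_0\}$.

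\textbf{Step 2 (reduction to the other sheet).} Let $v=u-c(h)e_\s$, where $c(h)=u(x_0)/e_\s(x_0)$. This is well defined because $|e_\s(x_0)|\sim1$, and $c$ is polynomially bounded by temperedness (via Sobolev embedding of the localized $H_h^{-N}$ bound after ellipticity gives smoothness). Then $P(h)v=O(h^\infty)$, $\WF_h(v)\subset\{\s\x>0\}$ and $v(x_0)=O(h^\infty)$. Since $P(h)$ is elliptic off $\{\x=\pm\x_0\}$, we get $\WF_h(v)\subset\{\x=\s\x_0(x)\}$. We will then conjugate: $v=e_\s\,f$. The function $f$ satisfies an ODE $L f=O(h^\infty)$, where, after dividing, $L=-h^2g\pa_x^2-h(\ldots)\pa_x$ with leading part $-2ih g\f'\pa_x+h^2(\dots)\pa_x^2$. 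Equivalently, one can work with a Wronskian-type reduction: the exact solutions of the ODE on $I$ form a $2$-dimensional space spanned by solutions near $e_\s$ and $e_{-\s}$. Concretely, take the exact solution $\tilde e_\pm$ with Cauchy data equal to those of $e_\pm$ at $x_0$. By variation of constants (Duhamel with the exact fundamental matrix, whose norm is $O(h^{-1})$ at worst on the compact $I$, since solutions are oscillatory with $O(1)$ amplitudes), we get $\tilde e_\pm-e_\pm=O_{C^\infty}(h^\infty)$. Likewise $v$ equals an exact solution $\tilde v$ up to $O(h^\infty)$. Write $\tilde v=\a\tilde e_\s+\b\tilde e_{-\s}$.

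\textbf{Step 3 (killing coefficients).} The coefficient $\b$ is controlled by microlocalization. Pick $A\in\Psi^0_h$ elliptic near $\{\x=-\s\x_0\}$ over a small interval around $x_0$ and microsupported in $\{-\s\x>0\}$. Then $A\tilde v=O(h^\infty)$ and $A e_\s=O(h^\infty)$, while $\|A e_{-\s}\|_{L^2}\sim1$. This gives $|\b|=O(h^\infty)$, using polynomial bounds on $\a$ to absorb the errors. Then $v(x_0)=O(h^\infty)$ gives $\a e_\s(x_0)=O(h^\infty)$, so $\a=O(h^\infty)$. Hence $v=O_{C^\infty(I)}(h^\infty)$: the $C^\infty$ bounds follow since each $h\pa_x$ costs nothing on WKB functions and the ODE controls higher derivatives. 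Applying the same argument to $w$ gives $w=c'(h)e_\s+O(h^\infty)$ with $c'=w(x_0)/e_\s(x_0)=c+O(h^\infty)$. The conclusion $u-w=O_{C^\infty(I)}(h^\infty)$ follows.

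\textbf{Main obstacle.} The delicate point is Step 2's Duhamel/Grönwall estimate: one must ensure the propagator of the first-order system for $(u,h\pa_xu)$ is polynomially (indeed uniformly) bounded in $h$ on $I$, so that $O(h^\infty)$ errors remain $O(h^\infty)$. I would handle this by writing the system in the WKB basis $(e_+,e_-)$. In this basis the fundamental matrix is $\mathrm{diag}(e_+,e_-)$ up to $O(h^\infty)$, and the Wronskian $W(e_+,e_-)\sim h^{-1}$ is nonvanishing, which gives bounds of order $h^{-1}$.
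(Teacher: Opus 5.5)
Your argument is correct and follows essentially the same route as the paper. Both use a local WKB basis on $I$, decompose a tempered quasimode in that basis with polynomially bounded coefficients, use the wavefront hypothesis to show that the coefficient on the sheet $\{-\s\x>0\}$ is $O(h^\infty)$, and evaluate at $x_0$ to match the remaining coefficient.

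The only difference is how the decomposition is obtained. You pass through exact solutions and a variation-of-constants bound. The paper instead uses the identity $h\pa_xW_h(f_1,f_2)=(P(h)f_1)f_2-f_1P(h)f_2$ directly: it shows $W_h(u,\gamma_\pm)$ is constant modulo $O(h^\infty)$, and Cramer's rule then gives the coefficients with no propagator estimate. Your ``main obstacle'' argument via the approximate fundamental matrix is in fact this same Wronskian computation.
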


This result is a particular case of uniqueness of microlocal solutions for operators of real principal type; compare \cite[Lem.~18]{CP} and \cite[around (3.10)]{SZ}. In the special case of our one-dimensional problem, we give a direct proof using a local
WKB basis and the Wronskian. By symmetry it is enough to work with $\x>0$ and $I\Subset(1,\infty)$.

\begin{lem}
Let $I\Subset (1,\infty)$ and $x_0\in I$. There are real-valued phases $S_{\pm}\in C^{\infty}((1,\infty))$ with $\pm S_\pm'>0$, and classical symbols $
\z_{\pm}(x;h)\sim \sum_{j\geq0}h^j\z_{\pm,j}(x)$
such that, setting $\c_{\pm}(x)=\z_{\pm}(x;h)e^{\frac{i}{h}S_{\pm}(x)}$, 
\begin{align*}
P(h)\c_{\pm}=O_{C^{\infty}(I)}(h^{\infty}),\quad S_{\pm}(x_0)=0,\quad \z_{\pm}(x_0)=1+O(h^{\infty}).
\end{align*}
In particular, $\c_{\pm}(x_0)=1+O(h^\infty)$.
\end{lem}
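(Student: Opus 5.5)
We construct $\c_\pm$ by the standard WKB procedure on $(1,\infty)$, where $g(x)=x^2-1>0$, so $P(h)$ is of real principal type with principal symbol $g(x)\x^2-1$.

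\textbf{Eikonal equation.} Substituting $\z e^{\frac{i}{h}S}$ into $P(h)$ gives
\begin{align*}
e^{-\frac{i}{h}S}P(h)\bigl(\z e^{\frac{i}{h}S}\bigr)=\bigl(g(S')^2-1-4\pi^2h^2x^2\bigr)\z-ih\bigl(2gS'\z'+(gS')'\z\bigr)-h^2(g\z')'.
\end{align*}
We choose the $h$-independent phase
\[
S_\pm(x)=\pm\int_{x_0}^x g(y)^{-1/2}\,dy .
\]
It is smooth and real on $(1,\infty)$, satisfies $\pm S_\pm'>0$ and $S_\pm(x_0)=0$, and makes $g(S_\pm')^2-1$ vanish identically. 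We move the term $-4\pi^2h^2x^2$ into the transport hierarchy. Since $I\Subset(1,\infty)$, there is no issue with unboundedness, and $4\pi^2x^2$ is just a smooth $O(1)$ coefficient at order $h^2$.

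\textbf{Transport equations.} Write $L_\pm=2gS_\pm'\partial_x+(gS_\pm')'$. Collecting powers of $h$ gives the recursion
\begin{align*}
L_\pm\z_{\pm,0}=0,\qquad -iL_\pm\z_{\pm,j}=(g\z_{\pm,j-1}')'+4\pi^2x^2\z_{\pm,j-1}\quad(j\geq1).
\end{align*}
Since $gS_\pm'=\pm g^{1/2}$ never vanishes on $(1,\infty)$, these are first-order linear ODEs with nonvanishing leading coefficient.

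The first one is solved explicitly by
\[
\z_{\pm,0}=\bigl(g(x_0)/g(x)\bigr)^{1/4}.
\]
For $j\geq1$ we solve each equation by integrating from $x_0$, fixing the constant so that $\z_{\pm,j}(x_0)=0$. Every $\z_{\pm,j}$ is then smooth on $(1,\infty)$, with $\z_{\pm,0}(x_0)=1$.

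\textbf{Resummation.} We Borel-resum $\z_\pm\sim\sum h^j\z_{\pm,j}$ to a smooth function. To get the exact normalization, we multiply the resummed symbol by the constant $1/\z_\pm(x_0;h)=1+O(h^\infty)$, which preserves the expansion.

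Because $L_\pm$ annihilates constants only up to the term $(gS')'$, it is not immediately clear that the rescaled $\z_\pm$ still solves the equation. It does, because $P(h)$ is linear: multiplying a quasimode by an $h$-dependent constant of size $1+O(h^\infty)$ is harmless. This gives $\z_\pm(x_0)=1$ exactly, and hence $\c_\pm(x_0)=1+O(h^\infty)$.

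\textbf{Error estimate.} Truncating at order $N$ leaves a residual $h^{N+1}$ times smooth functions, with all derivatives bounded on compact $I$. Since $S_\pm$ is real, $|e^{iS_\pm/h}|=1$. Derivatives of the oscillatory factor cost only powers of $h^{-1}$, which are absorbed because $N$ is arbitrary. This gives $P(h)\c_\pm=O_{C^\infty(I)}(h^\infty)$.

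This lemma is routine. The only point requiring care is $g>0$ on $\overline I$, which guarantees that the phase and the transport coefficients are smooth and nondegenerate.
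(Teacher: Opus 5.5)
Your proposal is correct and is exactly the standard WKB construction that the paper invokes while omitting the details. Taking the $h$-independent phase $S_\pm=\pm\int_{x_0}^x g^{-1/2}\,dy$ and absorbing $-4\pi^2h^2x^2$ into the transport recursion is legitimate on $I\Subset(1,\infty)$, and normalizing $\z_{\pm,j}(x_0)=\delta_{j0}$ and then rescaling by the constant $\z_\pm(x_0;h)^{-1}=1+O(h^\infty)$ gives the required values at $x_0$ (your remark about $L_\pm$ annihilating constants is garbled, but unnecessary, since linearity of $P(h)$ alone settles that point).
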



\begin{proof}
This lemma is well-known and the proof is based on the standard WKB construction. We omit the details.
\end{proof}

\begin{lem}\label{lem:WKBuniWro}
Let $I\Subset (1,\infty)$ and $x_0\in I$. If an $h$-tempered family $u\in C^{\infty}(I)$ satisfies
\begin{align*}
P(h)u=O_{C^{\infty}(I)}(h^{\infty}),
\end{align*}
then there exist coefficients $c_{\pm} = c_{\pm}(h) \in \mathbb{C}$ such that
\begin{align*}
u(x)=c_{+}\c_+(x) +c_-\c_{-}(x) +O_{C^{\infty}(I)}(h^{\infty})
\end{align*}
for $x\in I$. 
Moreover, we can take $c_{\pm}$ explicitly as
\begin{align}\label{eq:Wronskian-coefficients}
c_+=\frac{W_h(u,\c_-)(x_0)}{W_h(\c_+,\c_-)(x_0)},\quad c_-=-\frac{W_h(u,\c_+)(x_0)}{W_h(\c_+,\c_-)(x_0)}.
\end{align}

\end{lem}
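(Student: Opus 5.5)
Set $f=P(h)u$, so that $f=O_{C^\infty(I)}(h^\infty)$. We write $W_h(a,b)=g(a\pa_x b-b\pa_x a)$ for the weighted Wronskian; the precise normalization of $W_h$ is irrelevant, since it cancels in \eqref{eq:Wronskian-coefficients}. Two facts drive the argument. For any $a,b$,
\[
\pa_x W_h(a,b)=h^{-2}\bigl(a\,P(h)b-b\,P(h)a\bigr),
\]
because $P(h)=-h^2\pa_x g\pa_x+V$ is formally symmetric. Hence for the quasimodes $\c_\pm$ and $u$ the Wronskians are constant up to $O(h^\infty)$ on $I$, with derivatives of all orders also $O(h^\infty)$, using the $h$-temperedness of $u$ to absorb polynomial losses.

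First compute $W_h(\c_+,\c_-)(x_0)$. Since $\c_\pm(x_0)=1+O(h^\infty)$ and $\pa_x\c_\pm=(i/h)S_\pm'\c_\pm+O(1)$,
\[
W_h(\c_+,\c_-)(x_0)=\frac{i}{h}\,g(x_0)\bigl(S_-'(x_0)-S_+'(x_0)\bigr)+O(1).
\]
This is of size $\sim h^{-1}$ because $S_-'<0<S_+'$ and $g(x_0)>0$. In fact the eikonal equation gives $S_\pm'=\pm\sqrt{(1+4\pi^2h^2x^2)/g}$ to leading order, so the Wronskian is nondegenerate, with lower bound $\gtrsim h^{-1}$ uniformly on $I$.

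Next define $c_\pm$ by \eqref{eq:Wronskian-coefficients} and set $r=u-c_+\c_+-c_-\c_-$. Then $P(h)r=O(h^\infty)$, because $c_\pm=O(h^{-N})$ by temperedness (bound $u,\pa_xu$ at $x_0$ via Sobolev embedding on a slightly larger interval). By construction $W_h(r,\c_\pm)(x_0)=O(h^\infty)$, since the combination exactly reproduces these Wronskians, and by the first fact $W_h(r,\c_\pm)(x)=O(h^\infty)$ uniformly on $I$ up to polynomial factors in $h^{-1}$ from $r$. The pair of equations $W_h(r,\c_+)=\epsilon_+$, $W_h(r,\c_-)=\epsilon_-$ is a linear system for $(r,\pa_xr)$ at each $x$, with matrix determinant $\pm g\,W_h(\c_+,\c_-)/g\sim h^{-1}$. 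Concretely,
\[
r=\frac{\epsilon_-\c_+-\epsilon_+\c_-}{W_h(\c_+,\c_-)}
\]
up to sign conventions, and similarly for $\pa_xr$. Hence $r,\pa_xr=O(h^\infty)$ uniformly on $I$. Higher derivatives follow from the equation $P(h)r=O_{C^\infty}(h^\infty)$: solving for $\pa_x^2r=(g h^2)^{-1}(\dots)$ gains control of each further derivative at the cost of $h^{-2}$, which is harmless against $O(h^\infty)$, so $r=O_{C^\infty(I)}(h^\infty)$.

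The main obstacle is the bookkeeping of polynomial losses in $h$. We need an a priori bound $\|u\|_{C^k}=O(h^{-N})$ from $h$-temperedness, obtained by semiclassical elliptic regularity away from the characteristic set and Sobolev embedding on a slightly enlarged interval inside $I$. This requires shrinking $I$ slightly or working on an interval $I'\Supset I$; I would first run the argument on such an $I'$ where the hypotheses still hold. With that bound in hand, every error term $O(h^\infty)\cdot O(h^{-N})$ remains $O(h^\infty)$, and the proof closes.
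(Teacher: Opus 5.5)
Your proposal is correct and is essentially the paper's argument: the identity $h\partial_x W_h(f_1,f_2)=(P(h)f_1)f_2-f_1P(h)f_2$ together with temperedness makes $W_h(u,\gamma_\pm)$ constant modulo $O(h^\infty)$, $W_h(\gamma_+,\gamma_-)(x_0)$ is nondegenerate because $S_+'(x_0)\neq S_-'(x_0)$, and Cramer's rule gives the expansion (your solve for $(r,\partial_x r)$, followed by using the equation for higher derivatives, is exactly that step). Two slips are cosmetic: your formula for $\partial_x W_h$ has the wrong overall sign, and since temperedness only yields polynomial bounds locally, the right fix is to work on compact subintervals of $I$, not on an enlarged $I'\Supset I$ where the hypotheses need not hold.
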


\begin{proof}
We introduce the weighted Wronskian
\begin{align*}
W_h(f_1,f_2)=g(x)\bigl(f_1h\pa_xf_2-(h\pa_xf_1)f_2\bigr).
\end{align*}
Then
\begin{align}\label{eq:Wronskian-identity}
h\pa_xW_h(f_1,f_2)=(P(h)f_1)f_2-f_1P(h)f_2.
\end{align}
We apply this identity with $(f_1,f_2)=(u,\c_\pm)$. Using temperedness we get that $W_h(u,\c_\pm)$ is constant modulo $O(h^\infty)$ on $I$, while $W_h(\c_+,\c_-)(x_0)$ is bounded away from zero for small $h$ since $S_+'(x_0)\neq S_-'(x_0)$. By applying Cramer's rule we obtain the formulas for coefficients, and hence the asserted expansion.
\end{proof}

\begin{proof}[Proof of Theorem \ref{thm:WKBuniquethm}]
We consider the case $\s=+$ only.
By Lemma \ref{lem:WKBuniWro}, we have
\begin{align*}
u(x)=c_+\c_+(x)+c_-\c_-(x)+O_{C^{\infty}(I)}(h^{\infty}),\,\,
w(x)=d_+\c_+(x)+d_-\c_-(x)+O_{C^{\infty}(I)}(h^{\infty})
\end{align*}
for $x\in I$.
From the coefficient formulas \eqref{eq:Wronskian-coefficients}, applied to $u$ and $w$, we deduce that $|c_{\pm}|+|d_{\pm}|\lesssim h^{-M}$ for some $M>0$. By the assumption $\WF_h(u)\cup \WF_h(w)\subset \{(x,\xi)\mid \x>0\}$ and $\WF_h(\c_-)\subset \{(x,\xi)\mid \x<0\}$, we obtain $c_-=d_-=O(h^{\infty})$. Hence
\begin{align*}
u(x)=c_+\c_+(x)+O_{C^{\infty}(I)}(h^{\infty}),\,\,
w(x)=d_+\c_+(x)+O_{C^{\infty}(I)}(h^{\infty}).
\end{align*}
Since $\c_{\pm}(x_0)=1+O(h^\infty)$, we have $u(x_0)=c_++O(h^{\infty})$ and $w(x_0)=d_++O(h^{\infty})$. Now our assumption implies $u(x_0)=w(x_0)+O(h^{\infty})$ and hence $c_+=d_++O(h^{\infty})$, which implies $u=w+O_{C^{\infty}(I)}(h^{\infty})$. This completes the proof.
\end{proof}

\subsection{Phases and model quasimodes}\label{subsection:phases-model-quasimodes}

 We define 
\begin{align}
\f_{\s_1,\s_2}(x)
&:=\s_2\biggl(\int_{\s_1\infty}^x
\biggl(\sqrt{\frac{1+4\pi^2h^2y^2}{y^2-1}}-2\pi h\biggr)dy+2\pi hx\biggr),
&&\s_1x>1,\label{eq:phase1}\\
F_{\s_1,\s_2}(\x)
&:=\s_1\biggl(\int_{\s_2 \infty}^{\x}
\biggl(\sqrt{\frac{\y^2+1}{\y^2-4\pi^2h^2}}-1\biggr)d\y+\x\biggr),
&&\s_2\x>2\pi h.\label{eq:phase2}
\end{align}
Note that $\La_{\s_1,\s_2}$, $\f_{\s_1,\s_2}$, and $F_{\s_1,\s_2}$ depend on $h$ (despite the notation not indicating this explicitly).

Geometrically, $\f_{\s_1,\s_2}$, and $F_{\s_1,\s_2}$  generate the component $\La_{\s_1,\s_2}$ introduced in \eqref{eq:characteristic-components} in the following sense:
\begin{align}
\La_{\s_1,\s_2}
&=\left\{(x,\pa_x\f_{\s_1,\s_2}(x))\mid \s_1x>1\right\}
\label{Lagmfddef}\\
&=\left\{(\pa_{\x}F_{\s_1,\s_2}(\x),\x)\mid \s_2\x>2\pi h\right\}.
\nonumber
\end{align}
The first parametrization is adapted to $|x|\to\infty$, while the second is adapted to approaching the radial point $\mathcal L_{\s_1,\s_2}$, i.e., to the regime $x\to\s_1$ and $|\x|\to\infty$.

\begin{defn}\label{def:symbolclasses}
Let $k,\ell\in \re$. We write $X=1+|x|$, $Y=1+h|x|$, and $L_h(x):=1+\log(Y/(hX))$. For $a\in C^{\infty}(\re\times (0,1]_h)$, we say that $a\in \mathcal S_{k}^{\ell}$ if for all $\a,\b\in\mathbb{N}_0$,
\begin{align*}
|\pa_x^{\a}\pa_h^{\b}a(x,h)|\lesssim 
h^{-\b}X^{-\frac12-\alpha}Y^{k}
L_h(x)^{\ell}
\end{align*}
uniformly in $x\in \re$ and $0< h\leq 1$. We write $a\in \mathcal S_{k}^{\ell}$ for $x\in \Omega$ with $\Omega\subset \re$ when $a$ satisfies these estimates uniformly on $x\in \Omega$ and $0< h\leq 1$.
\end{defn}

%

\begin{prop}\label{prop:quasimode}
For $\s_1,\s_2\in\{\pm\}$, there exist functions $u_{\s_1,\s_2}\in L^2(\re)$ satisfying
\begin{align}\label{eq:quasimodeeq}
P(h)u_{\s_1,\s_2}=O_{\mathcal{S}(\re)}(h^{\infty})
\end{align}
 as well as the following properties.\smallskip

\noindent$(i)$ For $\s_1x\geq2$,
\begin{align}\label{eq:quasimodeWKB}
u_{\s_1,\s_2}(x)
=\mathfrak{c}_{\s_1,\s_2}(h)a_{\s_1,\s_2}(x,h)e^{\frac{i}{h}\f_{\s_1,\s_2}(x)}
+O_{\mathcal{S}}(h^{\infty}).
\end{align}
For $\frac12\leq\s_1x\leq4$,
\begin{align}\label{eq:quasimodeLagrangian}
u_{\s_1,\s_2}(x)
=\frac{1}{\sqrt{2\pi h}}\int_{\re}b_{\s_1,\s_2}(\x,h)
e^{\frac{i}{h}(x\x-F_{\s_1,\s_2}(\x))}d\x
+O_{C^{\infty}}(h^{\infty}).
\end{align}
Moreover, $u_{\s_1,\s_2}(x)=0$ when $\s_1x\leq\frac14$. Above, $a_{\s_1,\s_2}\in \mathcal{S}_{-\frac{1}{2}}^{0}$ and $b_{\s_1,\s_2}\in S^{-1}$. There are constants $0<\x_0<\x_1$, independent of $h$, such that
\begin{align*}
\supp b_{\s_1,\s_2}(\cdot,h)\subset\{\x\in\re\mid \s_2\x\geq\x_0\},
\end{align*}
and
\begin{align}
&a_{\s_1,\s_2}(x,h)
-\frac{1}{|g(x)\f_{\s_1,\s_2}'(x)|^{\frac{1}{2}}}
\in h\mathcal{S}_{-\frac{3}{2}}^1,
\nonumber\\
&b_{\s_1,\s_2}(\x,h)
-|(\pa_xp_h)(F_{\s_1,\s_2}'(\x),\x)|^{-\frac{1}{2}}
\in hS^{-2},
\quad \s_2\x\geq\x_1,
\nonumber\\
&\mathfrak{c}_{+,+}(h)=\frac{1}{\sqrt{2}}
e^{2\pi iI(1+\frac{1}{4\pi^2h^2})+\frac{\pi i}{4}}
(1+r_{+,+}(h)),
\label{eq:connformula}\\
&\mathfrak{c}_{+,-}=\mathfrak{c}_{-,+}
=\overline{\mathfrak{c}_{-,-}}
=\overline{\mathfrak{c}_{+,+}}.
\nonumber
\end{align}
Here $r_{+,+}$ satisfies $|\pa_h^{\b}r_{+,+}(h)|\lesssim h^{1-\b}\log(1/h)$, and we recall that $I$ is defined in \eqref{eq:Iadef}.

\noindent$(ii)$ $\|u_{\s_1,\s_2}\|_{L^2(\re)}^2\sim\log(1/h)$.
\end{prop}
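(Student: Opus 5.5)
By the symmetries $x\mapsto -x$ and complex conjugation, which map $\Lambda_{\s_1,\s_2}$ to $\Lambda_{-\s_1,\s_2}$ (up to relabelling) and $\Lambda_{\s_1,-\s_2}$ respectively, and which commute with the real operator $P(h)$, it suffices to construct $u_{+,+}$. The other three are then obtained by reflection and/or conjugation, and this also yields the stated relations between the constants $\mathfrak c_{\s_1,\s_2}$. The construction has three steps: a WKB quasimode on $x\geq 2$ (extended down to $x\ge 1+\delta$), a Lagrangian (Fourier-integral) quasimode on $\frac12\le x\le4$, and a matching on an overlap $x\in[2,4]$ via Theorem \ref{thm:WKBuniquethm}. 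A cutoff then glues these into a global function.

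\textbf{WKB part.} We make the ansatz $a e^{i\f_{+,+}/h}$. Since $g\f'^2=1+4\pi^2h^2x^2$ exactly, the eikonal equation has no error, and the transport equations read
\[
2g\f' a_j'+(g\f')'a_j=i\,\partial_x(g\partial_x a_{j-1})+4\pi^2x^2\cdot 0\ \text{(already absorbed)}.
\]
The leading solution is $a_0=|g\f'|^{-1/2}$. Because $g\f'\sim X Y$, this gives $a_0\in\mathcal S^0_{-1/2}$. Each further $a_j$ is obtained by integrating from $x=+\infty$, i.e.\ $a_j(x)=-\int_x^\infty(\dots)$. The source $\partial_x(g\partial_x a_{j-1})$ gains $X^{-2}$ relative to $g\f'\,a_{j-1}/X$. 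On the region $1\ll x\ll h^{-1}$, where $g\f'\sim x$, the integrand is only of size $x^{-1}$ times the previous symbol, so integrating from scale $h^{-1}$ down produces one factor $\log(Y/(hX))=L_h$ per step. The claim to verify is the inductive bound $h^j a_j\in h^j\mathcal S^{j}_{-1/2-j}$. Here each step gains $h/X$ but loses one power of $L_h$; for $x\gtrsim h^{-1}$ the $Y$-weight decreases instead. Borel summation, using $hL_h/X\lesssim h\log(1/h)$, then gives $a\in\mathcal S^0_{-1/2}$ with $a-a_0\in h\mathcal S^1_{-3/2}$ and $P(h)(ae^{i\f/h})=O_{\mathcal S}(h^\infty)$. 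The $X^{-1/2-\alpha}$ decay ensures the remainders are rapidly decaying in the sense of $\mathcal S$. I expect this symbol-class bookkeeping, presumably the content of Appendix \ref{appendix:symbolic-estimates}, to be the main technical obstacle: the $\partial_h$ derivatives of $\f$ hit $x$-growing terms, which is why the $h^{-\beta}$ loss is built into the class.

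\textbf{Lagrangian part.} Near $x=1$, we conjugate by the semiclassical Fourier transform. The operator becomes a first-order operator in $\x$ (plus lower order) whose symbol is $p_h(x,\x)$ with $x\mapsto -hD_\x$. We seek $b(\x)e^{-iF_{+,+}(\x)/h}$ on $\x\ge\x_0$. The eikonal equation is solved by $F$, and the transport equation along $\Lambda_{+,+}$, parametrized by $\x$, has leading solution $|\partial_xp_h(F',\x)|^{-1/2}\sim (2\x^2)^{-1/2}\sim \x^{-1}$. Higher terms are in $h\,S^{-2}$, since each step gains $\x^{-1}$ and $h$, and the coefficient $(x^2-1)$ is smooth. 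We cut off smoothly near $\x_0$, where the phase is nonstationary for $x$ in the given range. Cutting off for $x\le\frac14$ and for $x$ large costs only $O(h^\infty)$, because the wavefront set lies over $\Lambda_{+,+}$.

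\textbf{Matching.} On $x\in[2,4]$, we apply stationary phase to the oscillatory integral. The critical point is $\x=\f'(x)$, and the critical value is $x\f'-F(\f')$. By the Legendre relation this equals $\f(x)-C(h)$, where $C(h)$ is the constant defined by $\f(x)=\int_1^x\f'\,dy+(\text{boundary term})$ together with the normalisation of $F$ at infinity. Computing it, one obtains $C(h)=-2\pi h\, I(1+\frac1{4\pi^2h^2})+O(h^2\log(1/h))$ plus an $h$-multiple with explicit value. The Hessian is $-F''<0$ and contributes the phase $e^{i\pi/4}$, and the amplitude $b_0$ times the Hessian factor equals $\sqrt{2}\,a_0$ at the leading order. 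This fixes $\mathfrak c_{+,+}$ and uses Theorem \ref{thm:WKBuniquethm} to upgrade the pointwise agreement to $O(h^\infty)$ agreement on $[2,4]$. The hard subpoint is showing $r_{+,+}=O(h\log(1/h))$. The correction $a-a_0$ evaluated at $x\sim1$ is only $O(h\log(1/h))$, and the renormalized-action comparison between the $\f$ and $F$ normalizations at infinity produces a term of order $h\log h$ (the source of $R(E)$); these must be tracked together with their $h$-derivatives.

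\textbf{$L^2$ norm.} Part (ii) follows from $|a|^2\sim (XY)^{-1}$. This gives $\int_2^\infty |a|^2\,dx\sim\int_2^{h^{-1}} x^{-1}\,dx+O(1)\sim\log(1/h)$. The contribution from $x\le 4$ is $O(1)$, since the logarithmic singularity of the $\x^{-1}$ symbol is integrable, and $|\mathfrak c|\sim1$.
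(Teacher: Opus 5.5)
Your architecture is the paper's: reduction to $u_{+,+}$ by conjugation and reflection, a Fourier-side ansatz $J(b)$ near $x=1$ with $b_j\in S^{-1-j}$, a WKB ansatz on $x\geq2$ with $a_j\in\mathcal S^j_{-1/2-j}$ obtained by integrating from $+\infty$, stationary-phase matching on $[2,4]$ upgraded by Theorem~\ref{thm:WKBuniquethm}, gluing, and $\int_2^\infty x^{-1}(1+hx)^{-1}\,dx\sim\log(1/h)$ for (ii). However, the matching computation, which is precisely what yields \eqref{eq:connformula}, contains errors. On $\Lambda_{+,+}$ one has $F''=-\partial_\x p_h/\partial_x p_h$. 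Hence $b_0(\x(x))|F''(\x(x))|^{-1/2}=|\partial_\x p_h(x,\f'(x))|^{-1/2}=(2g\f')^{-1/2}=a_0/\sqrt2$, not $\sqrt2\,a_0$. Your factor would give $|\mathfrak c_{+,+}|\approx\sqrt2$ instead of $1/\sqrt2$. Also, $F'$ is decreasing, so the Hessian $-F''$ is positive. You wrote $-F''<0$, which would produce $e^{-\pi i/4}$ rather than the Maslov factor $e^{\pi i/4}$.

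More importantly, the phase constant is exact: there is no $O(h^2\log(1/h))$ error, and the action comparison produces no $h\log h$ term. Indeed $\partial_x\bigl(x\x(x)-F(\x(x))-\f(x)\bigr)=\x'(x)\bigl(x-F'(\x(x))\bigr)=0$. Letting $x\to\infty$, where $x\x(x)-\f(x)\to0$ and $\x(x)\to2\pi h$, gives exactly $-F(2\pi h)=2\pi hI(1+\frac{1}{4\pi^2h^2})-2\pi h$, and the $-2\pi h$ drops out since $e^{-2\pi i}=1$. This is what fixes the phase of $\mathfrak c_{+,+}$ as exactly $2\pi I+\frac\pi4$; your unspecified explicit $h$-multiple must be shown to lie in $2\pi h\mathbb Z$. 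The entire $h\log(1/h)$ in $r_{+,+}$ comes from the amplitude ratio at $x=3$. The factor $\widetilde a(3,h)=\widetilde a_0(3,h)(1+O(h))$ carries no logarithm, since the $b_j$ are $h$-uniform symbols. By contrast, $a_1(3,h)/a_0(3,h)=\frac i8\log(1/h)+O(1)$, because the transport integrand behaves like $-\frac1{4x}$ on $3\leq x\lesssim h^{-1}$. For the bound on $r_{+,+}$ claimed here your misattribution is harmless, but this is the point that later determines $R(E)$. Relatedly, in the WKB bookkeeping the relative gain per step is $hL_h/Y$, not $h/X$: on $1\ll x\ll h^{-1}$ no decay is gained, only $h$ times a logarithm. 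Summing into $h\mathcal S^1_{-3/2}$ therefore needs boundedness of $hL_h/Y$, which is stronger than your bound on $hL_h/X$ since $X\geq Y$.
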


\begin{rem}
\noindent$(1)$  Microlocally away from the two ends of $\La_{\s_1,\s_2}$,
$u_{\s_1,\s_2}$ is a semiclassical Lagrangian distribution associated
with $\La_{\s_1,\s_2}$. Near $x=\s_1$, and for fixed $h$, it is conormal to $\{x=\s_1\}$ microlocally in the region $\s_2\x>0$. However, we stress that it is not globally described by a single standard semiclassical or scattering Lagrangian class.

\noindent$(2)$ One has $u_{\s_1,\s_2}\in H^{\frac12-0}(\re)\cap C^\infty(\re\setminus\{|x|=1\})$, but $u_{\s_1,\s_2}\notin H^{\frac12}_{\rm loc}(\re)$. 
\end{rem}

\proofsubtitle{Proof of Proposition \ref{prop:quasimode}.} Here, we construct $u_{+,+}$. The other $u_{\s_1,\s_2}$ can be constructed by setting
\begin{align*}
u_{+,-}(x):=\overline{u_{+,+}(x)},\quad u_{-,+}(x):=\overline{u_{+,+}(-x)},\quad u_{-,-}(x)=u_{+,+}(-x),
\end{align*}
where we have used that both the conjugation $u\mapsto \overline{u}$ and the reflection $u(x)\mapsto u(-x)$ commute with $P(h)$. In the following, we suppress the subscript $(+,+)$ when no risk of confusion arises.

\proofstep[Lagrangian distribution representation near $x=1$] We construct an ansatz on $\{\frac{1}{2}\leq x\leq 4\}$ in terms of a symbol
$b(\x,h)\sim \sum_{j=0}^{\infty}h^jb_j(\x,h)$ with $b_j\in S^{-1-j}$, supported away from the singular set $\x=2\pi h$.
The function $F=F_{+,+}$ satisfies the eikonal equation $p_h(F'(\x),\x)=0$, which is the graph parametrization of the component $\La_{+,+}$ in \eqref{Lagmfddef}. We set
\begin{align*}
J(b)(x):=\frac{1}{\sqrt{2\pi h}}
\int_{\re}e^{\frac{i}{h}(x\x-F(\x))}b(\x,h)d\x .
\end{align*}
We verify directly the transport formula used below; compare
\cite[Ex.~12.3]{GS} and \cite[Vol.~IV, Thm.~25.2.4]{H} for the general
calculus. Let $\mathcal F_h$ denote the unitary semiclassical Fourier transform
and put $c=2\pi h$. Then
\[
\mathcal F_hP(h)\mathcal F_h^{-1}
=-h^2\partial_\x\bigl((\x^2-c^2)\partial_\x\bigr)-\x^2-1.
\]
Conjugating this operator by $e^{-iF/h}$ and using
$p_h(F'(\x),\x)=0$, we obtain the exact identity
\[
P(h)J(b_j)=ihJ(\mathcal L_{H_{p_h}}b_j)+h^2J(r_j),
\quad r_j=-\partial_\x\bigl((\x^2-c^2)\partial_\x b_j\bigr),
\]
where
\[
\mathcal L_{H_{p_h}}b_j
=\alpha b_j'+\tfrac12\alpha'b_j,
\quad\alpha(\x)=\partial_xp_h(F'(\x),\x).
\]
In particular, $r_j\in S^{-1-j}$ and
$\supp r_j\subset\supp b_j$.
Therefore, for each $N>0$,
\begin{align*}
P(h)J\biggl(\sum_{j=0}^Nh^jb_j\biggr)(x)
={}&ihJ(\mathcal{L}_{H_{p_h}}b_0)+\sum_{j=0}^{N-1}h^{j+2}
\bigl(iJ(\mathcal{L}_{H_{p_h}}b_{j+1})+J(r_j)\bigr)\\
&+h^{N+2}J(r_N)+O_{C^{\infty}}(h^{\infty}).
\end{align*}
Thus, if we solve the equations $\mathcal{L}_{H_{p_h}}b_0=0$ and $i\mathcal{L}_{H_{p_h}}b_{j+1}+r_j=0$ successively, then we obtain a quasimode of $P(h)$ on $\frac{1}{2}\leq x\leq 4$. This will be used later in Lemma \ref{lem:quasiortho} and Lemma \ref{lem:quasimodeboundary}. In our case, after fixing the cutoff below, we solve these equations on the region $\x>\x_0$ concretely as
\begin{align}
b_0(\x)&=\chi(\x)|(\pa_xp_h)(F'(\x),\x)|^{-\frac{1}{2}},\label{eq:b_0}\\ 
b_{j+1}(\x)&=i\chi(\x)|(\pa_xp_h)(F'(\x),\x)|^{-\frac{1}{2}}
\int_{\infty}^{\x}|(\pa_xp_h)(F'(\y),\y)|^{-\frac{1}{2}}
 r_j(\y,h)d\y,
\label{eq:b_j}
\end{align}
where we write $b_j(\x)=b_j(\x,h)$; the symbolic properties are proved in Lemma \ref{lem:transport}. Here, we choose $\chi$ as follows. Let
\begin{align*}
\xi(x):=\sqrt{\frac{1+4\pi^2h^2x^2}{x^2-1}},
\end{align*}
so that $x=F'(\xi(x))$ for $x>1$. Since $F'$ is strictly decreasing on $(2\pi h,\infty)$ and $\xi(4)>1/4$ for $h>0$ small, we may fix
\begin{align*}
\x_0=\frac15,
\quad
\x_1=\frac14,
\quad
2\pi h<\x_0<\x_1<\xi(4)<\xi(2).
\end{align*}
We take $\chi\in C^\infty(\re;[0,1])$ with $\chi(\xi)=0$ for $\xi\leq \x_0$ and $\chi(\xi)=1$ for $\xi\geq \x_1$. Then the stationary points for $x\in[2,4]$ lie in the region where $\chi=1$. Moreover, on $\supp \chi'$ one has $F'(\xi)>4$, and hence $|x-F'(\xi)|\geq F'(\x_1)-4>0$ for $1/2\leq x\leq4$. Thus all terms produced by differentiating $\chi$ are $O_{C^\infty([1/2,4])}(h^\infty)$ by non-stationary phase. Similarly, if $x\in[1/2,2/3]$ and $\xi\geq \x_1$, then $F'(\xi)\geq1$, so $|x-F'(\xi)|\geq1/3$. We observe that
\begin{align}\label{eq:I(b)small}
J(b)(x)=O_{C^{\infty}}(h^{\infty})\quad \text{for}\,\, x\in \left[\frac{1}{2},\frac{2}{3}\right]
\end{align}
by the non-stationary phase argument. This estimate will be used in the final gluing of the two local expressions.

\proofstep[WKB representation at infinity] We construct a WKB ansatz on $\{x\geq2\}$; the uniform estimates through the scale $x\sim h^{-1}$ are proved in Lemma \ref{lem:log-symbol-aj}. Since $\f=\f_{+,+}$ satisfies the eikonal equation $p_h(x,\f'(x))=0$,
\begin{align*}
e^{-\frac{i}{h}\f(x)}P(h)(a(x)e^{\frac{i}{h}\f(x)})
={}&-ih\bigl(2g(x)\f'(x)a'(x)+g(x)\f''(x)a(x)\\
&\hspace{35mm}+g'(x)\f'(x)a(x)\bigr)\\
&-h^2\bigl(g(x)a''(x)+g'(x)a'(x)\bigr).
\end{align*}
where we write $a(x)=a(x,h)\sim \sum_{j=0}^{\infty}h^ja_j(x,h)$ with $a_j\in \mathcal{S}_{-\frac{1}{2}-j}^j$. We solve the transport equations
\begin{align*}
2ig(x)\f'(x)a_0'(x)
+ig(x)\f''(x)a_0(x)
+ig'(x)\f'(x)a_0(x)&=0,\\
2ig(x)\f'(x)a_{j+1}'(x)
+ig(x)\f''(x)a_{j+1}(x)
&=-ig'(x)\f'(x)a_{j+1}(x)\\
&\quad -g(x)a_j''(x)-g'(x)a_j'(x).
\end{align*}
successively as
\begin{equation}
\label{eq:WKBa_0}
a_0(x)=\frac{1}{|g(x)\f'(x)|^{\frac{1}{2}}}
=\frac{1}{(x^2-1)^{\frac{1}{4}}(4\pi^2h^2x^2+1)^{\frac{1}{4}}}
\in \mathcal{S}_{-\frac{1}{2}}^0.
\end{equation}
\begin{align}
a_j(x)
&=\frac{i}{2}|g(x)\f'(x)|^{-\frac{1}{2}}
\int_{\infty}^x
\frac{g(y)a_{j-1}''(y)+g'(y)a_{j-1}'(y)}{|g(y)\f'(y)|^{\frac{1}{2}}}
\,dy
\in \mathcal{S}_{-\frac{1}{2}-j}^j.
\label{eq:WKBa_j}
\end{align}
for $x\geq 2$, $j\geq 1$. The symbolic estimates of $a_j$ are proved in Lemma \ref{lem:log-symbol-aj}. In particular, the estimate $a(x)-a_0(x)\in h\mathcal{S}_{-\frac{3}{2}}^1$ uses the uniform boundedness of
\[
\frac{hL_h(x)}{Y}
=\frac{h}{1+h|x|}\biggl(1+\log\frac{1+h|x|}{h(1+|x|)}\biggr),
\quad 0<h\leq1,\quad |x|\geq2.
\]
Then, we obtain a quasimode of $P(h)$ on $\{x\geq 2\}$, that is, $P(h)(a(x)e^{\frac{i}{h}\f(x)})=O_{\mathcal{S}(\re)}(h^{\infty})$ for $x\geq 2$.

\proofstep[Matching on $2\leq x\leq4$] To match these two quasimodes on the region $2\leq x\leq 4$, we rewrite $J(b)(x)$ by using the stationary phase formula \cite[Thm.~3.11]{Z} as
\begin{align}\label{eq:stationary-phase-match}
J(b)(x)
&=e^{\frac{i}{h}(x\x(x)-F(\x(x)))+\frac{\pi i}{4}\sgn (-(F'')(\x(x))) }
\tilde{a}(x,h)
+O_{C^{\infty}}(h^{\infty})
\quad \text{for }x\in [2,4].
\end{align}
Here $\x(x)$ is the unique solution to $x=F'(\x(x))$ with $\x(x)>0$. We denote the leading amplitude by
\[
\widetilde a_0(x,h):=b_0(\x(x),h)|F''(\x(x))|^{-\frac12}.
\]
Then $\widetilde a$ is smooth for $x\in[2,4]$ and $h\in[0,1]$, and
\[
\widetilde a(x,h)=\widetilde a_0(x,h)+O_{C^\infty}(h)
\quad\text{for }x\in[2,4].
\]
We now compute the leading amplitude and phase more explicitly. By \eqref{eq:phase2}, we see $\sgn(-(F'')(\x(x)))=1$, where we recall $F=F_{+,+}$.
By a direct calculation, we see that the equation $x=F'(\x(x))$ is equivalent to $\x(x)=\f'(x)$. By differentiating $p_h(F'(\x),\x)=0$ with respect to $\x$, we have $F''(\x)=-(\pa_{\x}p_h)(F'(\x),\x)\cdot (\pa_{x}p_h)(F'(\x),\x)^{-1}$ and hence
\begin{align}\label{eq:tildeaexp}
\tilde{a}(x,h)
&=|(\pa_{\x}p_h)(x,\f'(x))|^{-\frac{1}{2}}+O_{C^{\infty}}(h)\nonumber\\
&=\frac{1}{\sqrt{2}|g(x)\f'(x)|^{\frac{1}{2}}}+O_{C^{\infty}}(h),
\quad x\in[2,4],
\end{align}
since $\chi=1$ on the stationary region. Finally, we observe that
\begin{align*}
\pa_x(x\x(x)-F(\x(x))-\f(x))&=\x(x)+x\x'(x)-\x'(x)F'(\x(x))-\f'(x)\\
&=\x'(x)(x-F'(\x(x)))=0
\end{align*}
by $\x(x)=\f'(x)$ and $x=(F')(\x(x))$, which implies $x\x(x)-F(\x(x))-\f(x)$ is independent of $x$. Since $\x(x)=\f'(x)\to2\pi h$ as $x\to\infty$ and
\[
\sqrt{\frac{1+4\pi^2h^2y^2}{y^2-1}}-2\pi h=O_h(|y|^{-2}),
\]
we have $x\x(x)-\f(x)\to0$ as $x\to\infty$. Therefore,
\begin{align*}
\lim_{x\to \infty}\bigl(x\x(x)-F(\x(x))-\f(x)\bigr)
&=-\lim_{x\to \infty}F(\x(x))\\
&=-\int_{\infty}^{2\pi h}
\biggl(\sqrt{\frac{\y^2+1}{\y^2-4\pi^2h^2}}-1\biggr)d\y-2\pi h\\
&=\int^{\infty}_{2\pi h}
\biggl(\sqrt{\frac{\y^2+1}{\y^2-4\pi^2h^2}}-1\biggr)d\y-2\pi h\\
&=2\pi h I\biggl(1+\frac{1}{4\pi^2h^2}\biggr)-2\pi h,
\end{align*}
where we recall that $I$ is defined in \eqref{eq:Iadef}. Since $e^{-2\pi i}=1$, the term $-2\pi h$ does not contribute after division by $h$ and exponentiation. Consequently,
\[
e^{\frac{i}{h}(x\x(x)-F(\x(x)))}
=e^{\frac{i}{h}\f(x)+2\pi i I(1+\frac{1}{4\pi^2h^2})}.
\]
In summary, we obtain
\begin{align*}
J(b)(x)
&=e^{\frac{i}{h}\f(x)+2\pi iI(1+\frac{1}{4\pi^2h^2})+\frac{\pi}{4}i}
\tilde{a}(x,h)+O_{C^{\infty}}(h^{\infty})
\quad \text{for }x\in [2,4].
\end{align*}

Now we take $\mathfrak{c}_{+,+}=\mathfrak{c}_{+,+}(h)$ as
\begin{align*}
\mathfrak{c}_{+,+}(h)
:=a(3,h)^{-1}\tilde{a}(3,h)
e^{2\pi iI(1+\frac{1}{4\pi^2h^2})+\frac{\pi}{4}i}.
\end{align*}
By \eqref{eq:WKBa_0}, \eqref{eq:WKBa_j}, \eqref{eq:tildeaexp}, and the uniqueness of microlocal solutions (Theorem \ref{thm:WKBuniquethm}), we see that
\begin{align}
\mathfrak{c}_{+,+}(h)
&=\frac{1}{\sqrt{2}}
e^{2\pi iI(1+\frac{1}{4\pi^2h^2})+\frac{\pi}{4}i}
\bigl(1+r_{+,+}(h)\bigr),
\nonumber\\
J(b)(x)
&=\mathfrak c_{+,+}(h)a(x)e^{\frac{i}{h}\f(x)} +O_{C^{\infty}}(h^{\infty}).
\label{eq:twoWKBequal}
\end{align}
where $r_{+,+}$ satisfies $|\pa_h^{\b}r_{+,+}(h)|\lesssim h^{1-\b}\log(1/h)$.

\proofstep[Gluing] We define $K_1=[\frac{2}{3},2]$, $K'_1=[\frac{1}{2},4]$, $K_2=[4,\infty)$, and $K'_2=[2,\infty)$.
We take $\chi_1,\chi_2\in C^{\infty}(\re;[0,1])$ with $\chi_j=1$ on $K_j$, $\supp \chi_j\subset K_j'$ ($j=1,2$), and $\chi_1+\chi_2=1$ on $[2,4]$. We define
\begin{align*}
u_{+,+}(x)=\chi_1(x)J(b)(x)+\mathfrak{c}_{+,+}(h)\chi_2(x)a(x)e^{\frac{i}{h}\f(x)}.
\end{align*}
By \eqref{eq:I(b)small} and \eqref{eq:twoWKBequal}, we have $P(h)u_{+,+}=O_{\mathcal{S}(\re)}(h^{\infty})$. This completes the construction of $u_{+,+}$.

\proofstep[$L^2$ size] Since $|g(x)\f_{\s_1,\s_2}'(x)|^{\frac{1}{2}}\sim |x|^{\frac{1}{2}}(1+h|x|)^{\frac{1}{2}}$, we have
\begin{align*}
\|u_{\s_1,\s_2}\|_{L^2(\{\s_1x\geq2\})}^2
&\sim\int_{\s_1x\geq2}|x|^{-1}(1+h|x|)^{-1}dx
\sim\log(1/h),\\
\|u_{\s_1,\s_2}\|_{L^2(\{|x|\leq2\})}^2&=O(1).
\end{align*}
\qed\medskip

\subsection{Almost orthogonality} Finally, we show that the quasimodes constructed above are almost orthogonal to each other.

\begin{lem}[Almost orthogonality of quasimodes]\label{lem:quasiortho}
Let $u_{\s_1,\s_2}$ be as in Proposition \ref{prop:quasimode}. Then,
\begin{align*}
\langle u_{\s_1,\s_2},u_{\s_1',\s_2'}\rangle_{L^2}=O(h^{\infty})
\end{align*}
when $(\s_1,\s_2)\neq (\s_1',\s_2')$.
\end{lem}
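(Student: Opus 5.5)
The plan is a case split according to whether the two indices differ in $\s_1$ or only in $\s_2$.

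\emph{Case $\s_1\neq\s_1'$.} By Proposition \ref{prop:quasimode}, $u_{+,\s_2}$ vanishes for $x\leq\frac14$ and $u_{-,\s_2'}$ vanishes for $x\geq-\frac14$. Their supports are therefore disjoint, and the inner product is exactly zero.

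\emph{Case $\s_1=\s_1'$, $\s_2\neq\s_2'$.} By the symmetries used in the construction ($u_{+,-}=\overline{u_{+,+}}$, and the reflection $x\mapsto -x$), it suffices to treat
\[
\langle u_{+,+},u_{+,-}\rangle=\int u_{+,+}(x)^2\,dx,
\]
where the equality holds up to complex conjugation depending on the convention. The integrand is an oscillating function whose frequency is the sum of two frequencies of the same sign, so it is non-stationary.

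Choose a partition of unity $\psi_1+\psi_2=1$ on $[\frac14,\infty)$ with $\supp\psi_1\subset[\frac14,3]$ and $\supp\psi_2\subset[\frac52,\infty)$. This splits the integral into a piece near the radial point and a piece near infinity.

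\emph{Piece near $x=1$.} On $\supp\psi_1$ we use the representation \eqref{eq:quasimodeLagrangian}, together with the fact that $u$ is $O(h^\infty)$ on $[\frac14,\frac12]$ by \eqref{eq:I(b)small} and its preceding cutoff. Inserting the oscillatory integral twice and integrating first in $x$ gives
\[
\frac{1}{2\pi h}\iint b(\x,h)b(\y,h)\,e^{-\frac ih(F(\x)+F(\y))}\,\widehat{\psi_1}\Bigl(-\frac{\x+\y}{h}\Bigr)\,d\x\,d\y .
\]
On the support of $b\otimes b$ we have $\x,\y\geq\x_0$, so $\x+\y\geq\max(2\x_0,\x,\y)$. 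The Schwartz decay of $\widehat{\psi_1}$ then gives a bound $C_N h^N\langle\x\rangle^{-N}\langle\y\rangle^{-N}$, while $b\in S^{-1}$ is bounded. Hence this piece is $O(h^\infty)$. The only point needing care is justifying the Fubini exchange, since $b$ is not integrable; we will insert a regularization $e^{-\epsilon\x^2}$ and let $\epsilon\to0$.

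\emph{Piece near infinity.} On $\supp\psi_2$ the function $u$ is given by the WKB form \eqref{eq:quasimodeWKB}, where the $O_{\mathcal S}(h^\infty)$ remainder contributes $O(h^\infty)$ since $a\in\mathcal S^0_{-1/2}$. The term to control is
\[
\mathfrak c^2\int\psi_2\,a^2\,e^{\frac{2i}{h}\f}\,dx .
\]
Here $\f'(x)=\sqrt{(1+4\pi^2h^2x^2)/(x^2-1)}\gtrsim Y/X$, so the phase derivative satisfies $2\f'/h\gtrsim Y/(hX)$. We integrate by parts repeatedly with the operator
\[
L=\frac{h}{2i\f'}\,\pa_x,
\]
and the key step is to check how each application of ${}^tL$ acts on the symbol classes of Definition \ref{def:symbolclasses}. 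Differentiating an element of $\mathcal S^{\ell}_k$-type weights gains $X^{-1}$; multiplying by $h/\f'\lesssim hX/Y$ costs $X/Y$; derivatives of $1/\f'$ behave the same way. Thus each step gains a factor $\lesssim h/Y\leq h$, up to powers of $L_h(x)$.

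Starting from $a^2\in X^{-1}Y^{-1}L_h^{\ell}$, after $N$ steps the integrand is bounded by $h^N X^{-1}Y^{-1-N}L_h^{\ell'}$, which is integrable with a uniform bound. There are no boundary terms, because $\psi_2$ kills the left end and the symbols decay at infinity. This gives $O(h^\infty)$.

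The main obstacle is exactly this bookkeeping in the two-scale class. One must verify that ${}^tL$ maps $X^{-1}Y^{k}L_h^\ell$-type functions to $hX^{-1}Y^{k-1}L_h^{\ell+1}$-type functions, uniformly across the transition region $x\sim h^{-1}$. The powers of $L_h\lesssim\log(1/h)$ only cost logarithms and are harmless against $h^N$. I would first check the explicit form of $\f''/\f'^2$, which should be $O(hX/Y\cdot X^{-1})$, directly from the formula for $\f'$.
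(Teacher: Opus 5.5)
Your proposal is correct and follows essentially the paper's proof: exact disjointness of supports when $\s_1\neq\s_1'$, frequency separation of the two Lagrangian representations near $x=1$ (your $\widehat{\psi_1}$ computation is the paper's integration by parts in $x$), and non-stationary phase with $L=\frac{h}{2i\f'}\pa_x$ on the WKB region, where your only variation is a single integration by parts across $x\sim h^{-1}$ using $\pa_x^k(1/\f')=O(X^{1-k}Y^{-1})$ instead of the paper's split into $3\leq x\leq 2/h$ and $x\geq 1/h$. Two harmless slips: $\f''/\f'^2$ is $O(Y^{-3})$, hence of size one for bounded $x$, and it is the coefficient $h\f''/\f'^2$ in ${}^tL$ that is $O(h/Y)$; also $\int_{5/2}^\infty X^{-1}Y^{-1-N}\,dx$ is of size $\log(1/h)$ rather than uniformly bounded, which still yields $O(h^\infty)$.
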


\begin{proof}

It suffices to prove
\begin{align*}
 \langle u_{+,+},u_{+,-}\rangle_{L^2}=O(h^\infty),
\end{align*}
since the case $\langle u_{-,+},u_{-,-}\rangle_{L^2}$ then follows by applying the symmetry
$x\mapsto -x$. If $\s_1\neq \s_1'$, then the supports of the
corresponding quasimodes are disjoint modulo $O_{\mathcal S}(h^\infty)$,
and the conclusion is immediate. We therefore consider only
$\langle u_{+,+},u_{+,-}\rangle_{L^2}$.

\proofstep[Compact region] Let $\chi_0\in C^\infty_{\rm c}(\re;[0,1])$ satisfy
\[
\chi_0=0\quad\text{on }(-\infty,\tfrac12],
\quad
\chi_0=1\quad\text{on }[\tfrac23,3],
\quad
\supp\chi_0\subset(\tfrac12,4).
\]
On $\supp\chi_0$ we use the Lagrangian distribution representations from
\eqref{eq:quasimodeLagrangian}:
\begin{align*}
u_{+,+}(x)&=\frac1{\sqrt{2\pi h}}\int
 e^{\frac ih(x\x-F_{+,+}(\x))}
 b_{+,+}(\x,h)\,d\x+O_{C^\infty}(h^\infty),\\
u_{+,-}(x)&=\frac1{\sqrt{2\pi h}}\int
 e^{\frac ih(x\eta-F_{+,-}(\eta))}
 b_{+,-}(\eta,h)\,d\eta+O_{C^\infty}(h^\infty).
\end{align*}
By Proposition \ref{prop:quasimode},
\[
\supp b_{+,+}\subset\{\x\geq\x_0\},
\quad
\supp b_{+,-}\subset\{\eta\leq-\x_0\},
\]
so $|\x-\eta|\geq2\x_0$ on the support of the amplitudes. Thus
\begin{align*}
\langle \chi_0u_{+,+},u_{+,-}\rangle_{L^2}
&=\frac1{2\pi h}\iiint
 e^{\frac ih(x(\x-\eta)-F_{+,+}(\x)+F_{+,-}(\eta))}
 \chi_0(x)b_{+,+}(\x,h)\overline{b_{+,-}(\eta,h)}
\,dx\,d\x\,d\eta \\
&\quad +O(h^\infty).
\end{align*}
We integrate by parts in $x$ using
\[
 e^{\frac ihx(\x-\eta)}
 =
 \frac{h}{i(\x-\eta)}\pa_x
 e^{\frac ihx(\x-\eta)}.
\]
For every $N$, this gives
\begin{align*}
 |\langle \chi_0u_{+,+},u_{+,-}\rangle_{L^2}|
 \leq C_N h^{N-1}
 \iint
 \frac{|b_{+,+}(\x,h)b_{+,-}(\eta,h)|}
 {|\x-\eta|^N}\,d\x\,d\eta
 +O(h^\infty).
\end{align*}
Taking $N$ large, the last integral is uniformly bounded since
$b_{+,+},b_{+,-}\in S^{-1}$ and $|\x-\eta|\geq2\x_0$. Hence
\[
\langle \chi_0u_{+,+},u_{+,-}\rangle_{L^2}=O(h^\infty).
\]

\proofstep[WKB estimate at infinity] By the gluing construction and \eqref{eq:I(b)small}, the part of the complementary term with $x\leq\frac23$ is $O(h^\infty)$. It therefore remains to treat the region $x\geq3$, where we use the WKB representations \eqref{eq:quasimodeWKB}
\begin{align*}
u_{+,+}(x)=\mathfrak c_{+,+}(h)a_{+,+}(x,h)e^{\frac ih\f_{+,+}(x)}
+O_{\mathcal S}(h^\infty),
\end{align*}
\begin{align*}
u_{+,-}(x)=\mathfrak c_{+,-}(h)a_{+,-}(x,h)e^{\frac ih\f_{+,-}(x)}
+O_{\mathcal S}(h^\infty).
\end{align*}
Since $\f_{+,-}=-\f_{+,+}$, we can define
\begin{align*}
 \g(x):=\f_{+,+}(x)-\f_{+,-}(x)=2\f_{+,+}(x).
\end{align*}
Then
\begin{align*}
 \g'(x)
 =
 2\sqrt{\frac{1+4\pi^2h^2x^2}{x^2-1}},
 \quad x>1.
\end{align*}
In particular,
\begin{align*}
 \g'(x)\sim x^{-1},\quad 3\leq x\leq2/h,
 \quad
 \g'(x)\sim h,\quad x\geq1/h.
\end{align*}
Moreover, the amplitude estimates in Proposition \ref{prop:quasimode}, together with Leibniz' rule, give directly
\begin{align*}
 \pa_x^k\bigl(a_{+,+}(x,h)\overline{a_{+,-}(x,h)}\bigr)
 =O\bigl((1+hx)^{-1}x^{-1-k}\bigr),
 \quad x\geq3.
\end{align*}

We choose $\chi_2,\chi_3\in C^\infty(\re;[0,1])$, depending on $h$, such that
\begin{align*}
 \chi_2+\chi_3=1-\chi_0 \quad\text{on } x\geq 3,
\end{align*}
with
\begin{align*}
 \supp\chi_2\subset \{3\leq x\leq 2/h\},
 \quad
 \supp\chi_3\subset \{x\geq 1/h\},
\end{align*}
and with the usual bounds
$|\pa_x^k\chi_2|+|\pa_x^k\chi_3|\leq C_kx^{-k}$ on their supports. Since the connection coefficients in \eqref{eq:quasimodeWKB} are uniformly bounded, it suffices to estimate
\begin{align*}
 I_j:=\int e^{\frac ih\g(x)}
 \chi_j(x,h)a_{+,+}(x,h)\overline{a_{+,-}(x,h)}\,dx,
 \quad j=2,3.
\end{align*}

\proofsubtitle{Estimate of $I_2$.} First consider $I_2$. On $\supp\chi_2$, we have
$\g'(x)\sim x^{-1}$ and
\begin{align*}
 \pa_x^k(\g'(x)^{-1})=O(x^{1-k}).
\end{align*}
Let 
 $L:=\frac{h}{i\g'(x)}\pa_x$, so that $Le^{\frac ih\g}=e^{\frac ih\g}$.
If $\a_2:=\chi_2a_{+,+}\overline{a_{+,-}}$ 
then repeated integration by parts gives
\begin{align*}
 I_2=\int e^{\frac ih\g}(L^t)^N\a_2\,dx.
\end{align*}
The symbol bounds above imply $(L^t)^N\a_2=O(h^N x^{-1})$
on $3\leq x\leq 2/h$. Consequently
\begin{align*}
 |I_2|
 \leq C_N h^N\int_3^{2/h}\frac{dx}{x}
 \leq C_N h^N\log(1/h)
 =O(h^\infty),
\end{align*}
since $N$ is arbitrary.

\proofsubtitle{Estimate of $I_3$.} Now consider $I_3$. On $\supp\chi_3$, we have
$\g'(x)\sim h$, and
\begin{align*}
 \pa_x^k\biggl(\frac{h}{\g'(x)}\biggr)=O(x^{-k}).
\end{align*}
Furthermore, since $hx\geq 1$,
\begin{align*}
 \pa_x^k\a_3
 =
 O(h^{-1}x^{-2-k}),
 \quad
 \a_3:=\chi_3a_{+,+}\overline{a_{+,-}}.
\end{align*}
For the same operator  $L$ as before, we obtain $(L^t)^N\a_3=O(h^{-1}x^{-2-N})$.
Therefore by integration by parts we get
\begin{align*}
 |I_3|
 \leq C_N h^{-1}\int_{1/h}^\infty x^{-2-N}\,dx
 \leq C_N h^N.
\end{align*}
Again $N$ is arbitrary, hence $I_3=O(h^\infty)$.

Combining the compact-region estimate with the two WKB arguments gives
\begin{align*}
 \langle u_{+,+},u_{+,-}\rangle_{L^2}=O(h^\infty).
\end{align*}
The same argument after the change of variables $x\mapsto -x$ gives
$ \langle u_{-,+},u_{-,-}\rangle_{L^2}=O(h^\infty)$.
This proves the lemma.
\end{proof}

In view of Proposition \ref{prop:quasimode}(ii), Lemma \ref{lem:quasiortho} also shows that the four model quasimodes are linearly independent for all sufficiently small $h$.

\subsection{Boundary values}

\begin{lem}\label{lem:quasimodeboundary}
Let $u=u_{\s_1,\s_2}$, and write $u_{\rm even}$ and $u_{\rm odd}$ for its even and odd parts. Then
\begin{align*}
&\lim_{x\to\pm\infty}\Bigl(
 x\sin(2\pi x)\pa_xu_{\rm even}(x)
 -(2\pi x\cos(2\pi x)-\sin(2\pi x))u_{\rm even}(x)
 \Bigr)
 =\mp\sqrt{\frac{\pi}{2h}}\mathfrak c_{\s_1,\s_2},\\
&\lim_{x\to\pm\infty}\Bigl(
 x\cos(2\pi x)\pa_xu_{\rm odd}(x)
 +(2\pi x\sin(2\pi x)+\cos(2\pi x))u_{\rm odd}(x)
 \Bigr)
 =\pm i\s_2\sqrt{\frac{\pi}{2h}}\mathfrak c_{\s_1,\s_2},\\
&\lim_{x\to\s}(x-\s)\pa_xu_{\s_1,\s_2}(x,h)
 =-\frac{\delta_{\s,\s_1}}{2\sqrt{\pi h}},
 \quad \s\in\{\pm\}.
\end{align*}
\end{lem}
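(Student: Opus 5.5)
The plan is to compute the three limits directly from the two local representations in Proposition~\ref{prop:quasimode}. We use \eqref{eq:quasimodeWKB} at spatial infinity and \eqref{eq:quasimodeLagrangian} near $x=\s_1$. Throughout, $h$ is fixed, and all remainders are discarded using only their behaviour as $x\to\pm\infty$ or $x\to\pm1$. By the symmetries $u_{+,-}=\overline{u_{+,+}}$, $u_{-,+}(x)=\overline{u_{+,+}(-x)}$ and $u_{-,-}(x)=u_{+,+}(-x)$ used in the construction, it suffices to treat $u=u_{+,+}$. The other cases then follow by conjugation, which changes $e^{2\pi ix}$ into $e^{-2\pi ix}$ and produces the factor $\s_2$, and by the reflection $x\mapsto-x$.

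\emph{Limits at $x=\pm1$.} Since $u_{+,+}=0$ for $x\le\frac14$, the limit at $x=-1$ vanishes, which accounts for $\delta_{\s,\s_1}$. Near $x=1$ we write, by \eqref{eq:quasimodeLagrangian}, $u=J(b)+O_{C^\infty}(h^\infty)$. A smooth remainder $r$ satisfies $(x-1)r'(x)\to0$, so it does not contribute. Rewriting
\[
x\x-F(\x)=(x-1)\x+(\x-F(\x)),
\]
we get $J(b)=v_+$ with amplitude $\tilde b(\x):=b(\x,h)e^{\frac ih(\x-F(\x))}$. From \eqref{eq:phase2},
\[
\x-F(\x)=-\int_\infty^\x\Bigl(\sqrt{\tfrac{\y^2+1}{\y^2-4\pi^2h^2}}-1\Bigr)d\y=O_h(\x^{-1}),
\]
with symbol-type derivative bounds. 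For fixed $h$ the exponential is therefore $1+O(\x^{-1})$ in $S^0$, and hence $\tilde b\in S^{-1}$. For the leading term we use that $b_0=|\pa_xp_h(F'(\x),\x)|^{-1/2}=|2F'(\x)\x^2-8\pi^2h^2F'(\x)|^{-1/2}$ and $F'(\x)\to1$. Together with $b-b_0\in hS^{-2}$, this gives $\tilde b=2^{-1/2}\x^{-1}+O(\x^{-2})$. Applying Lemma~\ref{lem:asymxto1} to $\sqrt2\,\tilde b$ yields the limit $-(\sqrt2\sqrt{2\pi h})^{-1}=-1/(2\sqrt{\pi h})$.

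\emph{Limits at infinity.} For $x\ge4$, $u_{+,+}(-x)=0$, so $u_{\rm even}=u_{\rm odd}=\frac12u$ as $x\to+\infty$. As $x\to-\infty$ we have instead $u_{\rm even}(x)=\frac12u(-x)$ and $u_{\rm odd}(x)=-\frac12u(-x)$. Substituting $x\mapsto -x$ in the brackets then gives exactly the sign changes $\mp$ and $\pm$ in the statement. The $O_{\mathcal S}(h^\infty)$ remainder in \eqref{eq:quasimodeWKB} is rapidly decreasing in $x$ for fixed $h$, so it contributes nothing.

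For the main term we need the asymptotics as $x\to\infty$ with $h$ fixed. First, $\f(x)=2\pi hx+O_h(x^{-1})$, so $e^{i\f/h}=e^{2\pi ix}(1+O(x^{-1}))$, and $\pa_x$ of the correction is $O(x^{-2})$. Second, by \eqref{eq:WKBa_0}, $a_0=(2\pi h)^{-1/2}x^{-1}(1+O(x^{-2}))$. Third, since $L_h$ is bounded for fixed $h$, the bound $a-a_0\in h\mathcal S^1_{-3/2}$ gives $a-a_0=O(x^{-2})$ with $\pa_x(a-a_0)=O(x^{-3})$. Hence
\[
u=Cx^{-1}e^{2\pi ix}+\rho,\qquad C=\frac{\mathfrak c_{+,+}}{2\sqrt{2\pi h}},\qquad \rho=O(x^{-2}),\quad \rho'=O(x^{-2}).
\]
The remainder $\rho$ contributes $x\cdot O(x^{-2})\to0$ to both brackets. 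For the main term, $\pa_x(x^{-1}e^{2\pi ix})=(2\pi i x^{-1}-x^{-2})e^{2\pi ix}$. The even bracket becomes
\[
2\pi C(i\sin2\pi x-\cos2\pi x)e^{2\pi ix}=-2\pi C=-\sqrt{\tfrac{\pi}{2h}}\,\mathfrak c_{+,+}.
\]
The odd bracket becomes
\[
2\pi C(i\cos2\pi x+\sin2\pi x)e^{2\pi ix}=2\pi iC=i\sqrt{\tfrac{\pi}{2h}}\,\mathfrak c_{+,+}.
\]

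\emph{Expected obstacle.} The only delicate point is the near-$x=1$ step. There one must check that $e^{\frac ih(\x-F(\x))}$ is a genuine $S^0$ symbol for fixed $h$, and that the leading coefficient of $\tilde b$ is exactly $2^{-1/2}|\x|^{-1}$, so that Lemma~\ref{lem:asymxto1} applies. This requires controlling $F'(\x)-1=O(\x^{-2})$ and the lower-order terms $b_j\in S^{-1-j}$. The computations at infinity are routine once the decay of $a-a_0$ is extracted from the symbol class with $h$ fixed.
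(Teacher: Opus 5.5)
Your proposal is correct and follows essentially the same route as the paper. At infinity you expand the WKB representation with $h$ fixed and substitute into the parity-decomposed boundary functionals. Near $x=\s_1$ you absorb $e^{-\frac ih(F(\x)-\s_1\x)}$ into the amplitude, identify its leading term $2^{-1/2}|\x|^{-1}$, and apply Lemma~\ref{lem:asymxto1}.

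The differences are cosmetic. You reduce to $u_{+,+}$ via the conjugation and reflection symmetries instead of treating all four components at once; this is consistent with the stated relations among the $\mathfrak c_{\s_1,\s_2}$. Also, your displayed expansion with $C=\mathfrak c_{+,+}/(2\sqrt{2\pi h})$ is really the expansion of $\frac12 u=u_{\rm even}=u_{\rm odd}$ on $x\geq4$, not of $u$ itself, although the limits you obtain are right.
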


\begin{proof}
Observe that $\f_{\s_1,\s_2}(x)=2\pi h\s_2x+O_h(|x|^{-1})$ as $\s_1x\to\infty$. By \eqref{eq:quasimodeWKB} and the definitions of $g$ and $\f_{\s_1,\s_2}$, we have
\begin{align*}
u_{\s_1,\s_2}(x)
&=\left\{\begin{aligned}
&\frac{\mathfrak c_{\s_1,\s_2}}{\sqrt{2\pi h}}|x|^{-1}e^{2\pi i\s_2x}
 +O_h(|x|^{-2})&& (\s_1x\to\infty),\\
&O(|x|^{-\infty})&& (\s_1x\to-\infty),
\end{aligned}\right.\\
\pa_xu_{\s_1,\s_2}(x)
&=\left\{\begin{aligned}
&2\pi i\s_2\frac{\mathfrak c_{\s_1,\s_2}}{\sqrt{2\pi h}}|x|^{-1}e^{2\pi i\s_2x}
 +O_h(|x|^{-2})&& (\s_1x\to\infty),\\
&O(|x|^{-\infty})&& (\s_1x\to-\infty).
\end{aligned}\right.
\end{align*}
Consequently, as $|x|\to\infty$,
\begin{align*}
u_{\rm even}(x)
&=\frac{\mathfrak c_{\s_1,\s_2}}{2\sqrt{2\pi h}}|x|^{-1}
 e^{2\pi i\s_1\s_2|x|}+O_h(|x|^{-2}),\\
u_{\rm odd}(x)
&=\frac{\s_1\sgn(x)\mathfrak c_{\s_1,\s_2}}{2\sqrt{2\pi h}}|x|^{-1}
 e^{2\pi i\s_1\s_2|x|}+O_h(|x|^{-2}).
\end{align*}
Substitution in the two boundary functionals gives the first two limits.

For the $x\to \sigma$ limit, set
\[
q_{\s_1,\s_2}(\x)=F_{\s_1,\s_2}(\x)-\s_1\x.
\]
The explicit formula \eqref{eq:phase2} gives $q_{\s_1,\s_2}(\x)=O_h(|\x|^{-1})$ as $\s_2\x\to\infty$. Together with Proposition \ref{prop:quasimode}, this yields
\[
e^{-\frac{i}{h}q_{\s_1,\s_2}(\x)}b_{\s_1,\s_2}(\x,h)
=\frac{1}{\sqrt2|\x|}+O_h(|\x|^{-2}),
\quad \s_2\x\to\infty.
\]
Thus the Lagrangian distribution representation \eqref{eq:quasimodeLagrangian} can be written near $x=\s_1$ as
\[
u_{\s_1,\s_2}(x)
=\frac{1}{\sqrt{2\pi h}}\int_{\re}
\biggl(\frac{1}{\sqrt2|\x|}+O_h(|\x|^{-2})\biggr)
 e^{\frac{i}{h}(x-\s_1)\x}\,d\x
 +O_{C^\infty}(h^\infty),
\]
with the amplitude supported in $\{\s_2\x>0\}$. Lemma \ref{lem:asymxto1} implies then the stated limit at $x=\s_1$, while $u_{\s_1,\s_2}$ vanishes near the opposite degeneracy.
\end{proof}

\subsection{Phase correction}

The connection formula \eqref{eq:connformula} determines a real phase correction. Since $r_{+,+}(h)=O(h\log(1/h))$, there are unique smooth functions $\varrho(h)>0$ and $d(h)\to0$ such that
\begin{align}\label{eq:def-d}
1+r_{+,+}(h)=\varrho(h)e^{\pi i d(h)}.
\end{align}
The function $d$ satisfies $|\partial_h^\beta d(h)|\lesssim h^{1-\beta}\log(1/h)$. We set
\begin{align}\label{eq:BS-function}
\mathrm{BS}(h)=2I\biggl(1+\frac{1}{4\pi^2h^2}\biggr)+\frac14+d(h),
\end{align}
so that
\begin{align}\label{eq:connection-BS}
\mathfrak c_{+,+}(h)=\frac{\varrho(h)}{\sqrt2}e^{\pi i\mathrm{BS}(h)}.
\end{align}

The following calculation is the analogue of the first correction in the all-orders Bohr--Sommerfeld rule of \cite{C1}. The new feature is that the first transport integral has a borderline $x^{-1}$ term on $1\ll x\ll h^{-1}$, producing an $h\log(1/h)$ phase correction.

\begin{lem}[Logarithmic correction to Bohr--Sommerfeld remainder]\label{lem:first-log-correction-d} The function $d(h)$ defined by \eqref{eq:def-d} satisfies
\begin{align}\label{eq:first-log-d}
d(h)=-\frac{h}{8\pi}\log(1/h)+O(h),\quad h\to0.
\end{align}
\end{lem}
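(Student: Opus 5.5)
The plan is to read $d(h)$ off the explicit formula for $\mathfrak c_{+,+}$ in the proof of Proposition~\ref{prop:quasimode}. There
\[
\mathfrak c_{+,+}(h)=a(3,h)^{-1}\widetilde a(3,h)e^{2\pi iI(1+\frac1{4\pi^2h^2})+\frac{\pi i}4},
\]
and comparing with \eqref{eq:connformula}, \eqref{eq:def-d} gives
\[
\pi d(h)=\arg\bigl(\sqrt2\,\widetilde a(3,h)\bigr)-\arg a(3,h)
\]
modulo $2\pi$, with both arguments tending to $0$. I would therefore split the computation into the phase of the Lagrangian amplitude $\widetilde a(3,h)$ and the phase of the WKB amplitude $a(3,h)$. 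I would show that only the second carries a logarithm.

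\emph{Step 1 (the Lagrangian side is $O(h)$).} The amplitude $\widetilde a(3,h)$ comes from stationary phase in \eqref{eq:stationary-phase-match} applied to $b=b_0+hb_1+\dots$. At $x=3$ the stationary point $\x(3)$ lies in a fixed compact subset of $\{\x>\x_1\}$. Its $O(h)$ corrections involve $b_1(\x(3))$ and derivatives of $b_0$ and $F$ there, all bounded uniformly in $h$: the integral in \eqref{eq:b_j} converges since $r_0\in S^{-1}$ and the weight $|\pa_xp_h|^{-1/2}$ decays, and $F$ depends smoothly on $h$ near $\x(3)$. Hence $\widetilde a(3,h)=\widetilde a_0(3,h)(1+O(h))$ with $\widetilde a_0>0$, so its argument is $O(h)$.

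\emph{Step 2 (the WKB side carries the logarithm).} Since $a_0>0$, we have
\[
\arg a(3,h)=h\,\mathrm{Im}\,\frac{a_1(3,h)}{a_0(3,h)}+O(h^2\log^2(1/h)),
\]
using $a_j\in\mathcal S^j_{-\frac12-j}$ to control the tail. From \eqref{eq:WKBa_j},
\[
\frac{a_1(3)}{a_0(3)}=\frac i2\int_\infty^3\bigl(g a_0''+g'a_0'\bigr)a_0\,dy .
\]
The integrand is real, because $a_0$ in \eqref{eq:WKBa_0} is real. So I only need the real integral $\mathcal J(h)=\int_3^\infty (ga_0''+g'a_0')a_0\,dy$ up to $O(1)$. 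I would split it at $y=1/h$.

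On $3\le y\le 1/h$, expand $a_0=y^{-1/2}(1+O(y^{-2})+O(h^2y^2))$. This gives $ga_0''+g'a_0'=\bigl(\tfrac34-1\bigr)y^{-1/2}+\dots$, so the integrand is $-\tfrac14 y^{-1}+O(y^{-3})+O(h^2y)$. Integrating yields $-\tfrac14\log(1/h)+O(1)$.

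On $y\ge 1/h$, rescale $y=t/h$. Then $a_0\approx h^{1/2}t^{-1/2}(1+4\pi^2t^2)^{-1/4}$, and the integrand becomes $h\,G(t)$ with $G(t)=O(t^{-3})$ at infinity and $G(t)\sim -\tfrac14 t^{-1}$ near $t\sim1$. This region contributes $O(1)$. The symbol estimates of Definition~\ref{def:symbolclasses}, as in Lemma~\ref{lem:log-symbol-aj}, make these expansions uniform.

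\emph{Step 3 (conclusion).} Steps 1 and 2 give $\mathcal J(h)=-\tfrac14\log(1/h)+O(1)$. Hence $a_1(3)/a_0(3)=\tfrac i2\bigl(-\mathcal J\bigr)=\tfrac i8\log(1/h)+iO(1)$, and so $\arg a(3,h)=\tfrac h8\log(1/h)+O(h)$. Therefore
\[
\pi d(h)=-\frac h8\log(1/h)+O(h),
\]
which is \eqref{eq:first-log-d}.

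The main obstacle is Step 2. One must check carefully that the crossover region $y\sim1/h$ and the remainders (the $O(h^2y^2)$ corrections to $a_0$, and the tail terms $h^2a_2/a_0$) contribute only $O(1)$ to the integral, respectively $O(h)$ to the phase, rather than further logarithms. I would handle this by writing $a_0(y)=y^{-1/2}A(hy)\,(1+O(y^{-2}))$ with the explicit profile $A(s)=(1+4\pi^2s^2)^{-1/4}$. Then I would compute the integral in the rescaled variable, where the only non-integrable behaviour is the $-\tfrac14 t^{-1}$ singularity at $t\to0$. That singularity is exactly the source of the logarithm.
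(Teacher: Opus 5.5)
Your proposal is correct and follows the paper's proof: you isolate $a_1(3,h)/a_0(3,h)=\frac{i}{2}\int_\infty^3 (ga_0''+g'a_0')a_0\,dy$, extract the $-\frac{1}{4y}$ integrand on $3\le y\lesssim h^{-1}$ to get $\frac{i}{8}\log(1/h)+O(1)$, show by rescaling that $y\gtrsim h^{-1}$ contributes only $O(1)$, and use that the stationary-phase amplitude $\widetilde a(3,h)=\widetilde a_0(3,h)(1+O(h))$ carries no logarithm. The differences are cosmetic: you track the two arguments separately rather than the ratio $\widetilde a/a$, cut at $1/h$ instead of $(2\pi h)^{-1}$, and expand $a_0$ asymptotically instead of using the closed form of $Q$.
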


\begin{proof}
We isolate the only contribution to the connection coefficient which can be of size $h\log(1/h)$. We work on the $(+,+)$ component and put
\begin{align*}
c=2\pi h,\quad
G(x,h):=|g(x)\f'(x)|=((x^2-1)(1+c^2x^2))^{1/2},
\quad a_0=G^{-1/2}.
\end{align*}
From \eqref{eq:WKBa_j},
\begin{align}\label{eq:a1-over-a0}
\frac{a_1(x,h)}{a_0(x,h)}
=\frac{i}{2}\int_{\infty}^{x}Q(y,h)\,dy,
\quad
Q:=\frac{g a_0''+g'a_0'}{G^{1/2}}.
\end{align}
A direct calculation gives
\begin{align}\label{eq:Qexplicit}
Q(x,h)=
-\frac{
2c^4x^4-3c^4x^2+6c^2x^4-10c^2x^2+2c^2+x^2-2
}
{4(x^2-1)^{3/2}(1+c^2x^2)^{5/2}}.
\end{align}
For $3\le x\le c^{-1}$ this implies
\begin{align*}
Q(x,h)=-\frac{1}{4x}+O(x^{-3})+O(c^2x),
\end{align*}
and the integral of the two error terms over $[3,c^{-1}]$ is $O(1)$. On the remaining region $x\ge c^{-1}$, the change of variables $s=cx$ in \eqref{eq:Qexplicit} gives
\begin{align*}
\int_{c^{-1}}^\infty |Q(x,h)|\,dx=O(1).
\end{align*}
Thus
\begin{align}\label{eq:Qlog}
\int_{\infty}^{3}Q(y,h)\,dy
=\frac14\log(1/c)+O(1)
=\frac14\log(1/h)+O(1),
\end{align}
because $c=2\pi h$. Combining \eqref{eq:a1-over-a0} and \eqref{eq:Qlog}, we get
\begin{align}\label{eq:a1log}
\frac{a_1(3,h)}{a_0(3,h)}=\frac{i}{8}\log(1/h)+O(1).
\end{align}
The stationary phase expansion together with Lemma \ref{lem:transport} implies
\begin{align}\label{eq:atilde-no-log}
\widetilde a(3,h)=\widetilde a_0(3,h)(1+h\widetilde A_1(h)+O(h^2)),
\quad \widetilde A_1(h)=O(1).
\end{align}
Moreover, by the definition of $\widetilde a_0$ and the calculation leading to \eqref{eq:tildeaexp}, $\widetilde a_0(3,h)=2^{-1/2}a_0(3,h)$. Therefore, using \eqref{eq:a1log},
\begin{align}\label{eq:ratio-log}
\frac{\widetilde a(3,h)}{a(3,h)}
=\frac{1}{\sqrt2}\bigl(1-\frac{i h}{8}\log(1/h)+O(h)\bigr).
\end{align}
By the definition of $\mathfrak c_{+,+}$, this means that the factor $1+r_{+,+}(h)$ in \eqref{eq:connformula} satisfies
\begin{align}\label{eq:r-first-log}
1+r_{+,+}(h)=1-\frac{i h}{8}\log(1/h)+O(h).
\end{align}
We use the branch of the logarithm near $1$ and obtain
\begin{align*}
d(h)=\pi^{-1}{\rm Im}\log(1+r_{+,+}(h))
=-\frac{h}{8\pi}\log(1/h)+O(h),
\end{align*}
which proves \eqref{eq:first-log-d}. 
\end{proof}

\section{Completeness of the model quasimodes}\label{section:completeness}

\subsection{Completeness statement} The main objective of this section is to prove a completeness statement for the four model quasimodes constructed previously. We state the result first; Proposition \ref{prop:asymexpef} is then proved in \S\ref{subsection:proof-completeness} once several auxiliary results are established.

\begin{prop}\label{prop:asymexpef}
Let $u\in L^2(\mathbb R)$ satisfy $\|u\|_{L^2}=1$ and $P(h)u=0$. There are coefficients $c_{\s_1,\s_2}=c_{\s_1,\s_2}(h)\in\mathbb C$ such that
\begin{align}\label{eq:coefsizeasymp}
\sum_{\s_1,\s_2\in\{\pm\}}|c_{\s_1,\s_2}|^2\sim(\log(1/h))^{-1}
\end{align}
{and if} $v=u-\sum_{\s_1,\s_2\in\{\pm\}}c_{\s_1,\s_2}u_{\s_1,\s_2}$, then
\begin{align*}
v&=O_{\mathcal S(\{|x|\geq2\})}(h^\infty),\\
v&=O_{C^\infty(K)}(h^\infty)\quad\text{for every }K\Subset\mathbb R\setminus\{\pm1\},\\
v&=O_{H_h^{\frac12-\delta}(\mathbb R)}(h^\infty)\quad\text{for every }\delta>0.
\end{align*}
Moreover, for each $\s\in\{\pm\}$ there are $a_\s^{(0)}(h),a_\s^{(1)}(h)=O(h^\infty)$ such that, locally near $x=\s$,
\begin{align}\label{eq:log-Heaviside-representation}
v(x)=a_\s^{(0)}(h)\log|x-\s|+a_\s^{(1)}(h)H(x-\s)
+O_{H^{\frac32-\delta}}(h^\infty)
\end{align}
for every $\delta>0$. 
\end{prop}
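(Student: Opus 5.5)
The plan is to determine the coefficients $c_{\s_1,\s_2}$ from the behaviour of $u$ at spatial infinity, then show that the remainder $v$ is small everywhere by propagating this smallness inward to $x=\pm1$.

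\textbf{Coefficients from the Wronskian at infinity.} On each exterior half-line $\{\s_1x\geq2\}$, the two WKB quasimodes $u_{\s_1,+}$ and $u_{\s_1,-}$ play the role of $\c_\pm$ in Lemma~\ref{lem:WKBuniWro}. Their weighted Wronskian $W_h(u_{\s_1,+},u_{\s_1,-})$ is constant modulo $O(h^\infty)$ and can be computed from the asymptotics in the proof of Lemma~\ref{lem:quasimodeboundary}. Using $\mathfrak c_{+,+}\overline{\mathfrak c_{+,+}}\sim 1$ and $gf'$-normalization, this gives $W_h\sim 1$, uniformly in $h$. I would then define
\[
c_{\s_1,+}=W_h(u,u_{\s_1,-})/W_h(u_{\s_1,+},u_{\s_1,-}),
\]
and $c_{\s_1,-}$ analogously, evaluated as $\s_1x\to\infty$. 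Since $P(h)u=0$ exactly and $P(h)u_{\s_1,\s_2}=O_{\mathcal S}(h^\infty)$, identity \eqref{eq:Wronskian-identity} integrated from $\s_1\infty$ shows that $W_h(v,u_{\s_1,\pm})=O(h^\infty)\langle x\rangle^{-\infty}$-controlled on $\s_1x\geq2$. One subtlety: $v$ must have a decaying profile at infinity, and the limits exist because both solutions are $|x|^{-1}e^{\pm2\pi ix}$ asymptotically. Solving the resulting $2\times2$ system by variation of parameters, with the Green kernel built from $u_{\s_1,\pm}$, on the whole range $2\leq\s_1x<\infty$ should give $v=O_{\mathcal S(\{|x|\geq2\})}(h^\infty)$. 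The only input needed is the uniform symbol bounds of Lemma~\ref{lem:log-symbol-aj}; the $|x|^{-1}$ weights lose at most powers of $\log(1/h)$. The same Wronskian/uniqueness argument from Theorem~\ref{thm:WKBuniquethm}, applied on compact subintervals of $(1,2]$ and then of $(-1,1)$, gives smallness on compacts $K\Subset\mathbb R\setminus\{\pm1\}$. On $(-1,1)$ the quasimodes vanish, so we must show that $u$ itself is $O(h^\infty)$ there. The approach is to note that $p_h$ is elliptic for $|x|<1$ (where $(x^2-1)\xi^2-1<0$), so semiclassical elliptic estimates give $u=O(h^\infty)$ on compacts of $(-1,1)$ once $u$ is tempered.

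\textbf{Size of the coefficients.} For \eqref{eq:coefsizeasymp} I will use Lemma~\ref{lem:quasiortho} together with Proposition~\ref{prop:quasimode}(ii): $\|\sum c\,u_{\s_1,\s_2}\|^2\sim\log(1/h)\sum|c|^2$. The remaining task is to show that $\|v\|_{L^2}=o(1)$, which then yields $\sum|c|^2\sim(\log(1/h))^{-1}$. For this I need the $H_h^{1/2-\delta}$ control near $\pm1$.

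\textbf{Near $x=\pm1$ (the main obstacle).} I apply the below-threshold radial estimate \cite[Thm.~E.54]{DZbook} at the sources $\mathcal L_{\s,\s}$ and propagate to the sinks $\mathcal L_{\s,-\s}$, using that $P(h)v=O(h^\infty)$ and that $v$ is already $O(h^\infty)$ on the Hamiltonian trajectories away from the radial set. The threshold regularity is $1/2$, which explains the loss $H^{1/2-\delta}$. Since $v$ is not a priori in any better space near the radial points, ellipticity away from $\{x=\pm1\}$ in the fibre direction handles the remaining regions of phase space. Upgrading this to the boundary-data statement \eqref{eq:log-Heaviside-representation} is the hardest part. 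For fixed $h$, I plan to use module regularity with respect to $(x\mp1)D_x$: since $P$ is a $b$-operator at $x=\s$, iterated application of $(x-\s)\partial_x$ preserves the estimates. This shows $v$ is conormal, with an expansion of the form $a^{(0)}\log|x-\s|+a^{(1)}H+$ an $H^{3/2-\delta}$ remainder, obtained by solving the indicial (regular-singular ODE) problem. Finally, bounding $a^{(0)},a^{(1)}$ by the $H_h^{1/2-\delta}$ norm gives $O(h^\infty)$. I would track the $h$-dependence through the transport equation at fibre infinity and use a bootstrap in $h$.
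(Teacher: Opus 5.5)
Up to the conormal expansion, your plan matches the paper's proof. That covers the Wronskian/Cramer decomposition on $\{|x|\geq2\}$, ellipticity and propagation on compact sets avoiding $\pm1$, below-threshold radial estimates for the $H_h^{\frac12-\delta}$ bound, and Lemma~\ref{lem:quasiortho} with Proposition~\ref{prop:quasimode}$(ii)$ for \eqref{eq:coefsizeasymp}. The gap is in the last step, and the whole content of that step is uniformity in $h$.

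Module regularity for fixed $h$ plus the indicial (Frobenius) analysis only gives the qualitative form of \eqref{eq:log-Heaviside-representation}. Its constants depend on $h$ and are not polynomially controlled. For example, on the side $|x|<1$ the regular Frobenius solution behaves like the modified Bessel function $I_0(\sqrt{2|x-\sigma|}/h)$, which is exponentially large at fixed distance from $\sigma$. So you cannot combine small coefficients with fixed-$h$ bounds on the profiles.

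Your step ``bounding $a^{(0)},a^{(1)}$ by the $H_h^{\frac12-\delta}$ norm'' cannot work from the norm alone. Both $\log|x-\sigma|$ and $H(x-\sigma)$ lie in $H^{\frac12-\delta}_{\rm loc}$, so a small $H^{\frac12-\delta}$ norm says nothing about these coefficients unless the equation is used quantitatively. The $O_{H^{3/2-\delta}}(h^\infty)$ bound on the remainder is left to an unspecified ``bootstrap in $h$''.

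The paper gets the missing uniformity by doing module regularity semiclassically. With $A=(x-\sigma)D_x$, the exact identity \eqref{eq:exact-conormal-commutator} gives $(P(h)-h^2B_k)A^kw=D_kP(h)w+\sum_{j<k}R_{kj}A^jw$. Since $h^2B_k$ does not move the threshold, the below-threshold radial estimate can be iterated to give $A^kw=O_{H_h^{1/2-\varepsilon}}(h^\infty)$ uniformly for every $k$. On the Fourier side this yields amplitudes $O_{S^{-1+\varepsilon}}(h^\infty)$. The transport equation then forces $b_{\sigma_2}=\gamma_{\sigma_2}\mathcal G_{\sigma,\sigma_2}^{-1/2}+O_{S^{-2+\varepsilon}}(h^\infty)$ with $\gamma_{\sigma_2}=O(h^\infty)$, which produces the $\log$ and Heaviside terms and the $H^{\frac32-\delta}$ remainder.

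In one dimension you could instead make your ODE idea quantitative:
\begin{itemize}
\item Set $w=gv'$, so $h^2w'=-P(h)v-(1+4\pi^2h^2x^2)v$. Then $L^2$-smallness of $v$ near $\sigma$ and smallness at a nearby point give $w=O_{H^1}(h^\infty)$ and $a^{(0)}_\sigma=w(\sigma)/(2\sigma)=O(h^\infty)$.
\item Hardy's inequality for $(w-w(\sigma))/g$ gives $O(h^\infty)$ one-sided $H^1$ remainders, hence $a^{(1)}_\sigma=O(h^\infty)$.
\item Then $w'=O_{H^{1/2-\delta}}(h^\infty)$, together with $(w-w(\sigma))/(x-\sigma)=\int_0^1w'(\sigma+t(x-\sigma))\,dt$, upgrades the remainder to $H^{\frac32-\delta}$. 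The cost of dilating by $t\in(0,1)$ in $H^{\frac12-\delta}$ is integrable in $t$.
\end{itemize}

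Two smaller points. First, the exterior step needs $h$-polynomial bounds on $u$ and its derivatives uniformly on $\{|x|\geq2\}$, not just the symbol bounds on $a_j$; the paper uses an energy estimate, and Gronwall on the coefficient system would also work. Second, near $\pm1$ nothing propagates from sources to sinks. The below-threshold estimate is applied separately at each of the four radial points, using the $O(h^\infty)$ control on a punctured neighbourhood, with $P$ replaced by $-P$ at the sources.
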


\subsection{Polynomial bounds at infinity}\label{subsection:energyest}

We begin with polynomial bounds needed to justify the Wronskian decomposition at infinity. Since the leading coefficient $x^2-1$ does not vanish on $(1,\infty)$, every distributional solution of $P(h)u=0$ is smooth there. For $x>1$,
\begin{align*}
P(h)u=0\quad \Longleftrightarrow\quad h^2(gu')'=(V_h-1)u,
\end{align*}
where $V_h(x)=-4\pi^2h^2x^2$.

\begin{lem}\label{lem:semicenergyest}
Let $x_0\in (1,\infty)$. Suppose that $u=u_h\in C^{\infty}((1,\infty))$ satisfies $P(h)u=0$ and
\begin{align*}
|u(x_0)|\leq Ch^{-N},\quad |u'(x_0)|\leq Ch^{-N}
\end{align*}
for some $N\geq0$ and $C>0$ independent of $h$. Then, for every integer $k\geq0$, there exists $M_k>0$ such that
\begin{align*}
\|u^{(k)}\|_{L^{\infty}([x_0,\infty))}=O(h^{-M_k}).
\end{align*}
The analogous statement on $(-\infty,-x_0]$ follows by the symmetry $x\mapsto -x$.
\end{lem}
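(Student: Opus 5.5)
We will prove the bound with an energy (Lyapunov-function) argument for the ODE $h^2(gu')'=(V_h-1)u$ on $[x_0,\infty)$, with $g=x^2-1$ and $V_h-1=-(1+4\pi^2h^2x^2)$. The equation is oscillatory, so we use the energy
\[
\mathcal E(x)=h^2g(x)^2|u'(x)|^2+g(x)\bigl(1+4\pi^2h^2x^2\bigr)|u(x)|^2 ,
\]
which mimics the conserved quantity of $(gu')'+\lambda u=0$ after writing $w=gu'$. With $q:=1+4\pi^2h^2x^2>0$ the equation reads $h^2w'=-qu$, and $\mathcal E=h^2|w|^2+gq|u|^2$. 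Since $w'=-h^{-2}qu$ and $u'=w/g$, we get
\[
\mathcal E'=2h^2\operatorname{Re}(\bar w w')+(gq)'|u|^2+2gq\operatorname{Re}(\bar u u')=-2q\operatorname{Re}(\bar w u)+(gq)'|u|^2+2q\operatorname{Re}(\bar u w)=(gq)'|u|^2 .
\]
Hence $\mathcal E'=\frac{(gq)'}{gq}\,gq|u|^2\le \frac{(gq)'}{gq}\mathcal E$. Grönwall gives
\[
\mathcal E(x)\le \mathcal E(x_0)\,\frac{g(x)q(x)}{g(x_0)q(x_0)} ,
\]
so $|u(x)|^2\le \mathcal E(x)/(gq)\le \mathcal E(x_0)/(g(x_0)q(x_0))$. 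By the hypothesis, $\mathcal E(x_0)=O(h^{2-2N}+h^{-2N})$, so $\|u\|_{L^\infty([x_0,\infty))}=O(h^{-N})$. This is the key step.

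For $u'$, the same inequality gives $h^2g^2|u'|^2\le \mathcal E(x_0)\,gq/(g(x_0)q(x_0))$. Therefore
\[
|u'|^2\lesssim h^{-2-2N}\,\frac{q}{g}\lesssim h^{-2-2N}\Bigl(\frac{1}{x^2-1}+4\pi^2h^2\frac{x^2}{x^2-1}\Bigr),
\]
which is bounded on $[x_0,\infty)$ uniformly in $h\le1$, since $x_0>1$. Thus $u'=O(h^{-N-1})$.

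Higher derivatives follow by induction using the equation solved for $u''$:
\[
u''=-\frac{g'}{g}u'-h^{-2}\frac{q}{g}u .
\]
The coefficients $g'/g$ and $q/g$ are bounded together with all their derivatives on $[x_0,\infty)$, uniformly for $h\le1$; note $\partial_x^j(q/g)$ is bounded because $h^2x^2/(x^2-1)$ is a symbol of order $0$. Differentiating $k-2$ times expresses $u^{(k)}$ as a combination of $u^{(j)}$, $j<k$, with bounded coefficients times at most $h^{-2}$. This gives $M_k=M_{k-1}+2$, or, more crudely, $M_k=N+2k$.

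The only real point is choosing an energy functional whose growth factor is exactly compensated by the weight $gq$, which is what makes the bound uniform up to $x=\infty$ without any loss in $x$. The computation above shows the cross terms cancel exactly, so no obstacle remains. The case $(-\infty,-x_0]$ follows by the symmetry $x\mapsto-x$, which commutes with $P(h)$.
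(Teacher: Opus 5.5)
Your proof is correct and follows the same strategy as the paper: an energy monotonicity estimate for $u,u'$ on $[x_0,\infty)$, then induction on $u''=-\frac{g'}{g}u'-h^{-2}\frac{q}{g}u$ for the higher derivatives. The only difference is the weighting. The paper uses $E=|hu'|^2+\frac{q}{g}|u|^2$, which satisfies $E'\le 0$ directly, whereas your Gr\"onwall step amounts to monotonicity of $\mathcal E/(gq)=|u|^2+\frac{h^2g}{q}|u'|^2$. This avoids the paper's lower bound $q/g\ge 4\pi^2h^2$ and gives the slightly sharper $\|u\|_{L^\infty}=O(h^{-N})$ instead of $O(h^{-N-1})$, which is immaterial for the lemma.
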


\begin{proof}
We define, for $x>1$, the energy functional
\begin{align*}
E(x):=|hu'(x)|^2+\frac{1-V_h(x)}{g(x)}|u(x)|^2 .
\end{align*}
Using that $h^2u''= -h^2\frac{g'}g u' + \frac{V_h-1}{g}u$ we compute
\begin{align*}
E'(x)
&=2\Re\!\big(\overline{u'(x)}h^2u''(x)\big)
+\biggl(\frac{1-V_h}{g}\biggr)'|u(x)|^2
+2\frac{1-V_h}{g}\Re\!(\overline{u(x)}u'(x))\\
&=-2h^2\frac{g'(x)}{g(x)}|u'(x)|^2
+\biggl(\frac{1-V_h}{g}\biggr)'|u(x)|^2 .
\end{align*}
Since
\begin{align*}
\frac{g'(x)}{g(x)}=\frac{2x}{x^2-1}>0,
\quad
\biggl(\frac{1-V_h(x)}{g(x)}\biggr)'
= -\frac{2(4\pi^2h^2+1)x}{(x^2-1)^2}<0
\end{align*}
for $x>1$, we have $E'(x)\leq0$. Thus $E(x)\leq E(x_0)$ for $x\geq x_0$. Since $x_0>1$,
\begin{align*}
E(x_0)\leq h^2|u'(x_0)|^2+C_0|u(x_0)|^2=O(h^{-2N}).
\end{align*}
Moreover,
\begin{align*}
\frac{1-V_h(x)}{g(x)}=\frac{1+4\pi^2h^2x^2}{x^2-1}\geq 4\pi^2h^2,
\quad x\geq x_0.
\end{align*}
 Hence
\begin{align*}
\|u\|_{L^{\infty}([x_0,\infty))}+\|u'\|_{L^{\infty}([x_0,\infty))}=O(h^{-N-1}).
\end{align*}
It remains to bound higher derivatives. We write
\begin{align*}
u''=Au'+Bu,
\quad
A(x)=-\frac{g'(x)}{g(x)},
\quad
B(x)=\frac{V_h(x)-1}{h^2g(x)}=B_0(x)+h^{-2}B_1(x),
\end{align*}
where
\begin{align*}
B_0(x)=-\frac{4\pi^2x^2}{x^2-1},
\quad
B_1(x)=-\frac{1}{x^2-1}.
\end{align*}
All derivatives of $A,B_0,B_1$ are bounded on $[x_0,\infty)$, consequently
\begin{align*}
\|B^{(\ell)}\|_{L^{\infty}([x_0,\infty))}\leq C_\ell h^{-2}.
\end{align*}
Differentiating $u''=Au'+Bu$ $k$ times gives
\begin{align*}
u^{(k+2)}=\sum_{j=0}^{k}\binom{k}{j}\bigl(A^{(j)}u^{(k-j+1)}+B^{(j)}u^{(k-j)}\bigr).
\end{align*}
 Inductively, starting from the bounds for $u$ and $u'$, we obtain
\begin{align*}
\|u^{(k)}\|_{L^{\infty}([x_0,\infty))}=O(h^{-M_k})
\end{align*}
for some exponents $M_k$, for example $M_{2m}=M_{2m+1}=N+1+2m$.
\end{proof}

\begin{lem}\label{lem:localelliptic}
Suppose that $u=u_h\in L^2(\re)$ satisfies $P(h)u=0$ and
\begin{align*}
\|u\|_{L^2}\leq Ch^{-M}
\end{align*}
for some fixed $M\geq0$ and $C>0$ independent of $h$. Then, for each $I\Subset(-\infty,-1)\cup(1,\infty)$ and $k\in\mathbb N_0$, there exists $M_k\geq0$ such that
\begin{align*}
\|u^{(k)}\|_{L^\infty(I)}\lesssim h^{-M_k}.
\end{align*}
\end{lem}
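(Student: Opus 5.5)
The plan is to turn the $L^2$ bound into pointwise bounds by interior elliptic regularity for a second-order ODE with non-degenerate leading coefficient. Near $I$ there is no semiclassical small-scale structure to worry about, since we only want polynomial losses in $h$. By symmetry under $x\mapsto -x$, it suffices to treat $I\Subset(1,\infty)$.

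First, fix a slightly larger compact interval $I'=[\alpha,\beta]$ with $I\Subset I'\Subset(1,\infty)$. On $I'$ we have $g=x^2-1\geq c_0>0$, and the equation $P(h)u=0$ reads
\begin{align*}
u''=Au'+Bu,\quad A=-\frac{g'}{g},\quad B=\frac{V_h-1}{h^2g},
\end{align*}
as in the proof of Lemma \ref{lem:semicenergyest}. All derivatives of $A$ and $h^2B$ are bounded on $I'$ uniformly in $h$.

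Second, I would obtain an $H^2(I')$ bound from the $L^2$ bound, arguing by interior regularity with a cutoff. Take $\chi\in C_{\rm c}^\infty$ with $\chi=1$ on a neighbourhood of $I$ and $\supp\chi\subset I'$. Pairing the equation $(gu')'=h^{-2}(V_h-1)u$ with $\chi^2\bar u$ and integrating by parts gives
\begin{align*}
\int\chi^2g|u'|^2\leq C\int|\chi'|\,\chi\, g|u'||u|+Ch^{-2}\int\chi^2|u|^2 .
\end{align*}
Absorbing the cross term by Cauchy--Schwarz (enlarging the cutoff once) yields $\|u'\|_{L^2(\tilde I)}\lesssim h^{-1-M}$ on an intermediate interval $\tilde I$. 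The ODE then gives $\|u''\|_{L^2(\tilde I)}\lesssim h^{-3-M}$. By the one-dimensional Sobolev embedding $H^1\subset L^\infty$, this produces bounds $|u(x)|+|u'(x)|\lesssim h^{-N}$ for all $x\in\tilde I$.

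Alternatively, and more simply, one can average: by the mean value theorem there is a point $x_0\in\tilde I$ where $|u(x_0)|\lesssim h^{-M}$. A crude pointwise bound for $u'$ would still need the above energy step, so I would keep the integration-by-parts argument.

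Third, to get all higher derivatives, I would differentiate $u''=Au'+Bu$ $k$ times exactly as in Lemma \ref{lem:semicenergyest}. Using $\|B^{(\ell)}\|_{L^\infty(I')}\lesssim h^{-2}$ and induction, this gives $\|u^{(k)}\|_{L^\infty(I)}\lesssim h^{-M_k}$, with $M_k$ growing linearly in $k$.

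The only mildly delicate point is the first-derivative bound, since $L^2$ control of $u$ alone does not bound $u'$ pointwise. The energy (Caccioppoli) estimate above handles this, and it works because $g$ is bounded below on $I'$. Everything else is routine ODE bootstrapping.
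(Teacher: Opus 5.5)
Your proposal is correct and follows essentially the same route as the paper: interior elliptic regularity for $-\partial_x(g\partial_x)$ on intervals where $g$ is bounded below, losing a factor $h^{-2}$ per two derivatives, followed by one-dimensional Sobolev embedding. The only difference is organizational. You prove the first $H^2$ step by hand via the Caccioppoli estimate and then obtain the higher derivatives pointwise by differentiating $u''=Au'+Bu$ as in Lemma \ref{lem:semicenergyest}, whereas the paper iterates $\|u\|_{H^{m+2}(I_m)}\le C_mh^{-2}\|u\|_{H^m(I_{m+1})}$ over nested intervals and embeds only at the end.
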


\begin{proof}
For fixed $k$, choose a sufficiently long nested sequence of relatively
compact intervals,
whose closures avoid $\{\pm1\}$. On these intervals, $|g|$ is bounded
away from zero. Standard interior elliptic estimates for the ordinary differential operator $-\partial_x(g\partial_x)$, together with
\begin{align*}
-\partial_x(g\partial_x)u=(4\pi^2x^2+h^{-2})u,
\end{align*}
give recursively
\begin{align*}
\|u\|_{H^{m+2}(I_m)}\leq C_mh^{-2}\|u\|_{H^m(I_{m+1})}.
\end{align*}
Thus $\|u\|_{H^{2r}(I)}\lesssim h^{-M-2r}$ for every fixed $r$, and one-dimensional Sobolev embedding gives the result; for example, one may take $M_k=M+2\lceil(k+1)/2\rceil$. In the application below, $\|u\|_{L^2}=1$, so $M=0$.
\end{proof}

\subsection{Radial estimate}\label{subsection:radialestimate}
Focusing for now our analysis on fixed compact subsets of the $x$-line, the relevant object is the semiclassical principal symbol of
$P(h)$, which we denote by
\[
p:=\sigma_{{\rm pr},h}(P(h))=(x^2-1)\x^2-1,
\]
and its characteristic set in $\overline{T^*\re}$ is
$\{\langle\x\rangle^{-2}p=0\}$.  
In the proofs  we will  use \cite[Thm.~E.54]{DZbook}, the \textit{below-threshold radial estimate} recalled below, applied to an operator of the form
\[
P_B(h):=P(h)-h^2B(h),\quad B(h)\in\Psi_h^0,
\]
at a radial sink, and to $-P_B(h)$ at a radial source. All neighborhoods introduced in the sequel are independent of $h$.

\begin{prop}[Below-threshold radial estimate]\label{prop:basic-radial-estimate}
Fix $\s_1,\s_2\in\{\pm\}$, let $s<1/2$, and let $B(h)$ be a bounded
family in $\Psi_h^0$. Suppose that $G_1\in\Psi_h^0$ is elliptic near
$\mathcal L_{\s_1,\s_2}$. For every sufficiently small neighborhood $V$ of
$\mathcal L_{\s_1,\s_2}$ in $\overline{T^*\re}$ with
$\overline V\subset\operatorname{ell}_h(G_1)$, there exist
$G,G_0\in\Psi_h^0$ and $\chi\in C^\infty_{\rm c}(\re)$ such that
\[
\mathcal L_{\s_1,\s_2}\subset\operatorname{ell}_h(G),\quad
\operatorname{WF}_h(G)\subset V,\quad
\operatorname{WF}_h(G_0)\subset V\setminus\mathcal L_{\s_1,\s_2},
\]
and, for every $N\geq0$,
\begin{align}\label{eq:radial-estimate-with-cutoffs}
\|Gv\|_{H_h^s}
\lesssim\|G_0v\|_{H_h^s}
+h^{-1}\|G_1P_B(h)v\|_{H_h^{s-1}}
+h^N\|\chi v\|_{H_h^{-N}},
\end{align}
with  constant  independent of $h$ and $v$.
\end{prop}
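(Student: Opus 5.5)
The plan is to derive Proposition \ref{prop:basic-radial-estimate} directly from \cite[Thm.~E.54]{DZbook}. The only work is to check its hypotheses for $P_B(h)$ at a sink and for $-P_B(h)$ at a source. That theorem concerns an operator $P\in\Psi_h^m$ with real principal symbol $p$ and a radial sink $L$ of $H_p$ in the fiber-radially compactified phase space. At $L$ one needs $\langle\x\rangle^{1-m}H_p$ to point into $L$, with a definite threshold quantity built from $\langle\x\rangle^{-m}\sigma_{{\rm pr},h}(h^{-1}\operatorname{Im}P)$. For $s$ below the threshold value, the theorem then gives an estimate of the form \eqref{eq:radial-estimate-with-cutoffs}, in which $G_0$ is microsupported on a punctured neighborhood of $L$, the forcing term carries the loss $h^{-1}$, and the residual term is $h^N\|\chi v\|_{H^{-N}_h}$.

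\emph{First}, I would identify the geometry. Here $m=2$ and $p=(x^2-1)\x^2-1$, which differs from $p_h$ only by $O(h^2)$ and has the same Hamilton field modulo terms vanishing as $h\to0$. By \eqref{eq:radial-linearization} with $h=0$, $\rho H_p$ extends smoothly to $\overline{T^*\re}$ and vanishes exactly at $\mathcal L_{\s_1,\s_2}$ near $x=\s_1$, $\rho=0$. Its linearization is $4\s_1\s_2(x-\s_1)\partial_x+2\s_1\s_2\rho\partial_\rho$. For $\s_1\s_2=-1$ both eigenvalues are negative, so $\mathcal L_{\s_1,\s_2}$ is an attracting sink, and we apply the theorem to $P_B$. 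For $\s_1\s_2=1$ it is a source, and replacing $P_B$ by $-P_B$ flips $H_p$, making $\mathcal L$ a sink for $-p$. In both cases $\rho$ decays along the flow, with $\rho^{-1}H_{\mp p}\rho=-2\rho\cdot(\text{positive})$ at $\mathcal L$. This is the quantity $\beta_0$ in the DZ normalization, and here $\beta_0=2|x|=2$ at $x=\pm1$.

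\emph{Second}, I would compute the threshold. The subprincipal contribution is the principal symbol of $h^{-1}\operatorname{Im}P_B=h^{-1}\cdot\frac{1}{2i}(P_B-P_B^*)$. The operator $P(h)$ is formally self-adjoint, since it is in divergence form with real coefficients. Hence $\operatorname{Im}P_B=-h^2\operatorname{Im}B$, which is $O(h^2)$ in $\Psi^0_h$. After division by $h$ and by $\langle\x\rangle^{m-1}$, this contributes nothing at fiber infinity. The threshold condition in \cite[Thm.~E.54]{DZbook} therefore reduces to $s<\frac{m-1}{2}=\frac12$, since the imaginary part of the normalized subprincipal symbol vanishes at $\mathcal L$. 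This is why $s<1/2$ appears, and why the bounded perturbation $h^2B$ is harmless.

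\emph{Third}, I would assemble the statement. Fix $V$ small, inside $\operatorname{ell}_h(G_1)$ and inside a region where the sink dynamics hold, meaning every trajectory starting in $V$ either converges to $\mathcal L$ or leaves $V$ backwards. The theorem then provides $G$ elliptic at $\mathcal L$ with $\WF_h(G)\subset V$ and $G_0$ with $\WF_h(G_0)\subset V\setminus\mathcal L$, together with the estimate. The forcing term is initially $\|G_1P_Bv\|$, and for $-P_B$ the sign is irrelevant under the norm. All constants are uniform because $B$ ranges over a bounded family.

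The main obstacle is the bookkeeping of conventions. DZ state the threshold using the normalized rescaled field and $\sigma(h^{-1}\operatorname{Im}P)$, with specific sign conventions for sources versus sinks, and one must verify that $s<1/2$ is indeed the below-threshold side for $m=2$ and $\beta_0=2$. I would check this through the model computation $\rho^{-1}H_p\rho$ at $\mathcal L$, confirming that the weight $\rho^{-2s+1}$ times the damping has the sign that allows regularity to propagate from the punctured neighborhood into $\mathcal L$ exactly when $2s-1<0$.
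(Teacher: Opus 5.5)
Your proposal is correct and follows the paper's proof essentially step for step: you apply \cite[Thm.~E.54]{DZbook} to $P_B(h)$ at the sink and to $-P_B(h)$ at the source, use formal self-adjointness of $P(h)$ so that $h^{-1}\operatorname{Im}P_B\in h\Psi_h^0$ drops out at fiber infinity, and read off from $\langle\x\rangle^{-1}H_p\log\langle\x\rangle\to-2\s_1\s_2$ that the threshold is exactly $s<\frac12$. The remaining differences are cosmetic: the paper first localizes near $x=\s_1$ and extends to a compact one-dimensional manifold so that the DZ theorem applies as stated, and your expression $\rho^{-1}H_{\mp p}\rho$ should be the rescaled quantity $\rho^{-1}(\rho H_{\mp p})\rho$, which tends to $-2$ at $\mathcal L_{\s_1,\s_2}$.
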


\begin{proof}
After localizing near $x=\s_1$, we extend the operator to a compact
one-dimensional manifold. Since
$h^2B(h)\in h^2\Psi_h^0$, we have
$\sigma_{{\rm pr},h}(P_B(h))=\sigma_{{\rm pr},h}(P(h))=p$. 
In the boundary chart $\s_2\x>0$, with
$\rho=(\s_2\x)^{-1}$, its rescaled Hamilton vector field is
\[
\rho H_{p}
=
2\s_2(x^2-1)\partial_x
+2\s_2x\rho\partial_\rho.
\]
Consequently, its linearization at
$\mathcal L_{\s_1,\s_2}=\{x=\s_1,\rho=0\}$ is
\[
4\s_1\s_2(x-\s_1)\partial_x
+2\s_1\s_2\rho\partial_\rho,
\]
which agrees with \eqref{eq:radial-linearization}.

We next verify the threshold condition in
\cite[(E.4.47)]{DZbook}. Since the operator has order two, the symbol
appearing there is
\[
\tau_s
:=
\sigma_{{\rm pr},h}\bigl(h^{-1}\operatorname{Im}P_B(h)\bigr)
+
\left(s-\frac12\right)
\frac{H_{p}\langle\x\rangle}{\langle\x\rangle}.
\]
As $P(h)$ is formally self-adjoint,
\[
h^{-1}\operatorname{Im}P_B(h)
=-h\operatorname{Im}B(h)\in h\Psi_h^0.
\]
Hence the first term in $\langle\x\rangle^{-1}\tau_s$ vanishes at
fiber infinity. Moreover,
\[
\langle\x\rangle^{-1}
\frac{H_{p}\langle\x\rangle}{\langle\x\rangle}
=
\langle\x\rangle^{-1}H_{p}\log\langle\x\rangle
=
-\frac{2x\x^3}{\langle\x\rangle^3}
\to -2\s_1\s_2
\]
at $\mathcal L_{\s_1,\s_2}$. Thus, at a radial sink,
i.e.~in the case when $\s_1\s_2=-1$,
\[
\langle\x\rangle^{-1}\tau_s
\to 2\left(s-\textstyle\frac12\right)<0.
\]
This means that the below-threshold condition is satisfied for $s<1/2$, so then
\cite[Thm.~E.54]{DZbook} applies.  
In the radial source case, reversing the Hamilton flow by replacing $P_B(h)$ with $-P_B(h)$ reduces the estimate to the sink case.
\end{proof}

We use Proposition~\ref{prop:basic-radial-estimate} as follows. If
$v=O_{H_h^{-N_0}}(h^{-M_0})$ on a neighborhood containing $\operatorname{supp}\chi$
for some $M_0,N_0\geq0$, and
\[
G_0v=O_{H_h^s}(h^\infty),\quad
G_1P_B(h)v=O_{H_h^{s-1}}(h^\infty),
\]
then $Gv=O_{H_h^s}(h^\infty)$. Indeed, for any $K>0$, taking
$N\geq N_0$ and $N-M_0\geq K$ makes the last term in
\eqref{eq:radial-estimate-with-cutoffs} $O(h^K)$.

\subsection{Conormal regularity and boundary expansion}
We write $c=O_{S^m}(h^\infty)$ if, on the subset of the $\x$-line under consideration,
\[
|\partial_\x^\alpha c(\x,h)|\leq C_{\alpha N}h^N
\langle\x\rangle^{m-\alpha},\quad \alpha,N\in\mathbb N_0.
\]

\begin{prop}[Conormal expansion]\label{prop:conormal-structure}
Fix $\s\in\{\pm\}$. For some $M_0,N_0\geq0$, assume that
$v=O_{H_h^{-N_0}}(h^{-M_0})$ locally near $x=\s$ and that
$P(h)v=O_{\mathcal S}(h^\infty)$. Suppose also that there are neighborhoods
$V_{\s_2}$ of $\mathcal L_{\s,\s_2}$ in
$\overline{T^*\re}$ such that
\[
\operatorname{WF}_h(v)\cap V_{\s_2}\subset\mathcal L_{\s,\s_2},
\quad \s_2\in\{\pm\}.
\]
Then, locally near $x=\s$,
\begin{align}\label{eq:boundary-form-conormal}
v(x)=a^{(0)}_\s(h)\log|x-\s|+a^{(1)}_\s(h)H(x-\s)
+O_{H^{3/2-\delta}}(h^\infty)
\end{align}
for every $\delta>0$, with $a^{(0)}_\s(h),a^{(1)}_\s(h)=O(h^\infty)$.
\end{prop}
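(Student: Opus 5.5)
The plan follows the abstract: module (conormal) regularity for fixed $h$, then transport equations at fiber infinity, then a bootstrap in $h$.

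\emph{Step 1: conormal regularity for fixed $h$.} Localize near $x=\s$ with a cutoff $\chi$ and work with $w=\chi v$. Then $P(h)w=f$, where $f=O_{\mathcal S}(h^\infty)$ modulo commutator terms supported away from $x=\s$. By the wavefront hypothesis and Theorem~\ref{thm:WKBuniquethm}, or by ellipticity away from $\Lambda$, those commutator terms are $O_{C^\infty}(h^\infty)$.

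The module is generated by $(x-\s)\partial_x$, which is characteristic at $\mathcal L_{\s,\pm}$ and commutes with $P(h)$ modulo the module; this is the Haber--Vasy setting. Apply the radial estimate, Proposition~\ref{prop:basic-radial-estimate}, inductively to $((x-\s)hD_x)^k w$. Here $P_B$ absorbs the $h^2$-order commutator terms, which is why the estimate was stated for $P(h)-h^2B(h)$. The wavefront hypothesis makes the $G_0$ terms $O(h^\infty)$. The conclusion is that $((x-\s)hD_x)^k w=O_{H_h^{s}}(h^{-M_k})$ for each $s<1/2$ and each $k$.

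\emph{Step 2: reduction to an ODE on the Fourier side.} Take the semiclassical Fourier transform $\hat w(\x)$. Iterated conormal regularity near $x=\s$ translates into symbolic behaviour of $e^{-i\s\x/h}\hat w(\x)$ in $\x$: the function lies in $\langle\x\rangle^{-s-1/2}L^2$ after any number of applications of $\x\partial_\x$. In Fourier variables, $\mathcal F_hP(h)\mathcal F_h^{-1}=-h^2\partial_\x((\x^2-c^2)\partial_\x)-\x^2-1$. For large $|\x|$ this is a regular-singular ODE at $\x=\infty$, and its solutions are combinations of $e^{iF_{\s,\pm}(\x)/h}\,b(\x)$ with $b\sim|\x|^{-1}\sum_j\x^{-j}$, solving transport equations as in Step~1 of Proposition~\ref{prop:quasimode}.

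Solving the inhomogeneous equation by variation of parameters from $\x=\pm\x_1$ outward gives
\[
\hat w(\x)=e^{i\s\x/h}\bigl(\alpha_\pm(h)|\x|^{-1}+\beta_\pm(h)|\x|^{-1}\log|\x| +O_{S^{-2+0}}(\cdot)\bigr),\qquad \pm\x\to\infty .
\]
The $\log$ term should be absent by the symbol class; I would check this. The coefficients are Wronskian expressions evaluated at $|\x|=\x_1$.

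\emph{Step 3: size in $h$ and identification of $v$.} The data at $|\x|=\x_1$ are microlocally in $V_{\pm}\setminus\mathcal L$, where $v$ is $O(h^\infty)$ by hypothesis. The forcing is $O(h^\infty)$ in the symbol classes. Hence $\alpha_\pm,\beta_\pm$ and the remainder are $O(h^\infty)$.

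Here the bootstrap in $h$ is used. The constants from Step~1 are only polynomially bad, $h^{-M_k}$. Writing the transport solution explicitly upgrades these to $O(h^\infty)$ with symbolic control, because each iteration gains a factor $h$ times a symbol of one lower order.

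Inverse-Fourier transform to conclude. By Lemma~\ref{lem:asymxto1}-type computations, $|\x|^{-1}$ supported on one side gives a combination of $\log|x-\s|$ and $H(x-\s)$ plus smooth terms. Taking $(\alpha_++\alpha_-)$ and $(\alpha_+-\alpha_-)$ yields $a^{(0)}_\s$ and $a^{(1)}_\s$, both $O(h^\infty)$. The remainder in $S^{-2+0}$ maps to $H^{3/2-\delta}$ with $O(h^\infty)$ norm, since $\int\langle\x\rangle^{3-2\delta}\langle\x\rangle^{-4+0}\,d\x<\infty$.

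\emph{Main obstacle.} The hard part is Step~1 coupled with Step~3: obtaining uniform-in-$h$ control of the conormal expansion, not merely polynomial bounds. The radial estimate gives only $H_h^{1/2-0}$ control, and each module derivative must be shown to cost only a fixed power of $h$. I would first attempt the whole argument directly on the Fourier side, via the ODE and the Wronskian, bypassing module regularity. The radial estimate would then be used only to verify that $\hat w$ lies in a weighted $L^2$ space. That membership suffices to discard the growing branch and to start the variation-of-parameters iteration.
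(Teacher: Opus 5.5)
Your argument is correct in substance, but the mechanism producing the $O(h^\infty)$ smallness differs from the paper's, and if carried out properly it makes your Step~1 redundant.

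\textbf{Comparison with the paper.} In the paper, smallness comes from the radial estimate itself. The terms $G_0w$ and $G_1P_B(h)w$ are $O(h^\infty)$, and the a priori bound only needs to be polynomial. So Proposition~\ref{prop:basic-radial-estimate} gives $w=O_{H_h^{1/2-\varepsilon}}(h^\infty)$ at the radial points, and the identity for $(P(h)-h^2B_k)A^k$ propagates this to $A^kw=O(h^\infty)$ for every $k$. There are no polynomially bad constants $h^{-M_k}$, and no bootstrap in $h$ is needed. This yields $b_{\sigma_2}=O_{S^{-1+\varepsilon}}(h^\infty)$. The Fourier-side equation is then used only as the first-order transport equation $\mathcal L_{H_{p_h}}b_{\sigma_2}=O_{S^{-1+\varepsilon}}(h^\infty)$. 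There, the $h^2\partial_\xi((\xi^2-c^2)\partial_\xi)$ term is perturbative precisely because of that a priori symbol class. This pins down the leading term $\gamma\mathcal G^{-1/2}$ and improves the remainder to $S^{-2+\varepsilon}$.

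You instead get smallness at finite frequency. Since $\mathrm{WF}_h(w)\subset\mathcal L_{\sigma,+}\cup\mathcal L_{\sigma,-}$, the Cauchy data of $\mathcal F_hw$ at $\xi=\pm\xi_1$ are $O(h^\infty)$. The forcing $\mathcal F_hf_h$ is $O(h^\infty\langle\xi\rangle^{-\infty})$. The weighted Wronskian of two WKB solutions is $O(1)$ and nondegenerate. Variation of parameters therefore carries the smallness out to $\xi=\pm\infty$. Done this way, neither the radial estimate nor module regularity is needed for this proposition. The price is that it relies on $P$ having coefficients of degree at most two in $x$, so that $\mathcal F_hP(h)\mathcal F_h^{-1}$ is again a second-order ODE. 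The paper's radial/module argument is the part meant to survive for more general operators.

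\textbf{Details to fix.} None of these is fatal.
\begin{itemize}
\item On each half-line $\sigma_2\xi>\xi_1$ the ODE has two solutions of the same size, $e^{-iF_{\pm\sigma,\sigma_2}(\xi)/h}$ times an $S^{-1}$ symbol (roughly $e^{\mp i\sigma\xi/h}|\xi|^{-1}$). The second one concentrates over $x=-\sigma$. Your displayed formula keeps only one of them.
\item There is no ``growing branch'', so the weighted-$L^2$ membership you invoke in the last paragraph cannot discard anything. The second branch is harmless because its coefficient is also $O(h^\infty)$ by the same Cauchy-data argument. Its inverse Fourier transform is smooth near $x=\sigma$ by non-stationary phase. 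Alternatively, module regularity excludes it.
\item The point $\xi=\infty$ is an irregular, not regular, singular point. The exponents $\pm i/h$ are distinct, and the transport equation gives $b=\gamma\mathcal G^{-1/2}$ modulo $S^{-2}$. So no $|\xi|^{-1}\log|\xi|$ term occurs.
\item The points $(x,\pm\xi_1)$ with $x$ near $\sigma$ generally do not lie in $V_\pm$. The Cauchy data are small because $p$ is elliptic there.
\item The $O(h^\infty)$ bound on the coefficients comes from the data, the forcing and the nondegenerate Wronskian. It does not come from ``each iteration gaining a factor $h$''.
\end{itemize}
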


In other words, the wavefront set condition states that $v=O_{C^\infty}(h^\infty)$ microlocally
at each point of $V_{\s_2}\setminus\mathcal L_{\s,\s_2}$. In view of $P(h)v=O_{\mathcal S}(h^\infty)$,
it suffices to check this condition on the characteristic set of $p$.
To that end we adapt the iterated module regularity argument of  \cite[Thm.~6.3]{HV}, then verify the
uniform conormal symbol estimates directly by Fourier transform.

\begin{proof}
\proofstep[Test module and its generators]
Fix $\s_2\in\{\pm\}$ and set
\[
N^*_{\s_2}\{x=\s\}
:=\{(x,\x)\in T^*\re\setminus0\mid x=\s,\ \s_2\x>0\}.
\]
Its closure in $\overline{T^*\re}$ meets  fiber infinity at
$\mathcal L_{\s,\s_2}$. Let $U_{\s_2}\subset T^*\re\setminus0$ be an
open conic neighborhood of $N^*_{\s_2}\{x=\s\}$ whose closure, taken in
$T^*\re\setminus0$, is disjoint from $N^*_{-\s_2}\{x=\s\}$.
Here $\Psi^m$ denotes the ordinary properly supported classical
pseudodifferential calculus, and $\sigma_{\rm pr}$ denotes its homogeneous principal symbol. Following \cite[Def.~6.1]{HV}, we introduce the test module
\[
\mathcal M_{\s,\s_2}
:=\{G\in\Psi^1\mid
\sigma_{\rm pr}(G)|_{N^*_{\s_2}\{x=\s\}}=0\}.
\]
Microlocally in $U_{\s_2}$, it is generated over $\Psi^0$, modulo smoothing
operators, by the identity and $A=(x-\s)D_x$:
\begin{align}\label{eq:module-generation}
\mathcal M_{\s,\s_2}=\Psi^0+\Psi^0A.
\end{align}
More precisely, every $G\in\mathcal M_{\s,\s_2}$ can be written
$G=E_1A+E_0+R$ microlocally in $U_{\s_2}$, with $E_0,E_1\in\Psi^0$ and
$R\in\Psi^{-\infty}$. Indeed, by Hadamard's lemma  we can factorize
\[
\sigma_{\rm pr}(G)(x,\x)=(x-\s)\x g_0(x,\x),
\]
where $g_0$ is homogeneous of order zero. If $E_1\in\Psi^0$ satisfies $\sigma_{\rm pr}(E_1)=g_0$, then since 
$\sigma_{\rm pr}(A)=(x-\s)\x$, the operators $G$ and $E_1A$ have the
same principal symbol microlocally in $U_{\s_2}$. Therefore $G-E_1A\in\Psi^0$ microlocally in $U_{\s_2}$.

\proofstep[Iterated regularity]
Choose $\chi\in C^\infty_{\rm c}(\re)$ equal to one near $\s$ and supported
sufficiently close to $\s$, and put $w=\chi v$.
The hypothesis and semiclassical elliptic regularity imply
$v=O_{C^\infty}(h^\infty)$ near $\operatorname{supp}\chi'$ and hence
\[
P(h)w=f_h:=\chi P(h)v+[P(h),\chi]v=O_{C^\infty_{\rm c}}(h^\infty),
\quad
\operatorname{WF}_h(w)\subset
\mathcal L_{\s,+}\cup\mathcal L_{\s,-}.
\]
Such a choice of $\chi$ is possible because the two characteristic curves
of $p$ above $x$ close to $\s$ lie in $V_+\cup V_-$, and $p$ is
elliptic at all remaining points there.

Writing $Q_h(x)=-1-4\pi^2h^2x^2$, we compute
\begin{align}\label{eq:exact-conormal-commutator}
[P(h),A]
=-\frac{2i\s}{x+\s}P(h)+\frac{2h^2\s}{x+\s}A+iR_{\s,h},
\end{align}
where $R_{\s,h}=2\s Q_h/(x+\s)+(x-\s)Q_h'$ is a uniformly bounded
multiplication operator near $x=\s$. Iterating this identity gives, for $k\geq1$,
\begin{align}\label{eq:commuted-module-equation}
(P(h)-h^2B_k)A^k
=D_kP(h)+\sum_{j=0}^{k-1}R_{kj}(h)A^j,
\quad B_k(x)=\frac{2k\s}{x+\s},
\end{align}
locally near $x=\s$. Here $D_k$ is a differential operator of order $k$
and the $R_{kj}(h)$ are uniformly bounded operators of order zero.
 In this computation, the $k$ occurrences of the term
$2h^2\s/(x+\s)A$ in \eqref{eq:exact-conormal-commutator} produce the coefficient
$B_k=2k\s/(x+\s)$ in front of $A^k$. Commuting $A$ past this coefficient
produces only order-zero factors, and the terms involving
$R_{\s,h}$ already contain fewer than $k$ powers of $A$.
Thus all remaining terms involve  indeed $A^j$ with $j<k$.  

Fix $s<1/2$. For $k=0$, Proposition~\ref{prop:basic-radial-estimate} with
$B=0$ implies $w=O_{H_h^s}(h^\infty)$ at both radial points.
Suppose the corresponding estimates hold for $A^jw$, $j<k$.
By \eqref{eq:commuted-module-equation},
\[
(P(h)-h^2B_k)A^kw=O_{H_h^{s-1}}(h^\infty)
\]
microlocally near either radial point. Let $G_1$ be supported where these
finitely many estimates hold. The a priori bound implies
\[
A^kw=O_{H_h^{-N_0-k}}(h^{-M_0-k}),
\]
since $hA$ has semiclassical differential order one. Moreover,
$G_0A^kw=O_{H_h^s}(h^\infty)$, because
$\operatorname{WF}_h(A^kw)\subset\operatorname{WF}_h(w)$ and the
microsupport of $G_0$ avoids the radial points. Applying
Proposition~\ref{prop:basic-radial-estimate} directly to $P(h)-h^2B_k$
completes the induction. Using a microlocal partition of unity argument and applying the
estimates away from the radial points, we obtain
\begin{align}\label{eq:iterated-module-estimate}
A^kw=O_{H_h^{1/2-\varepsilon}(\re)}(h^\infty),
\quad k\in\mathbb N_0,\quad\varepsilon>0.
\end{align}
Since $[A,E]\in\Psi^0$ for $E\in\Psi^0$, by repeated use of
\eqref{eq:module-generation} we can express each finite product of module
operators as a sum of $E_jA^j$, $E_j\in\Psi^0$, modulo smoothing terms.
Thus the corresponding iterated-regularity estimates hold as well.
The smoothing terms are $O_{C^\infty}(h^\infty)$ by
\eqref{eq:iterated-module-estimate}.

\proofstep[Oscillatory integral representation]
We use Melrose's characterization of conormal distributions by iterated
regularity, in the formulation of
\cite[Vol.~IV, Def.~25.1.1 and Prop.~25.1.3]{H}.
For each fixed $h$, by elliptic regularity away from $x=\s$ combined with \eqref{eq:iterated-module-estimate} and
\eqref{eq:module-generation},  $w$ is conormal to
$\{x=\s\}$. Thus it has an oscillatory integral representation with
phase $(x-\s)\eta$.  We want now to infer more precise information on the $h$-dependence.

Put $y=x-\s$, $w_\s(y,h)=w(\s+y,h)$, and let
\[
\widetilde a(\eta,h)=\mathcal F_yw_\s(\eta,h)
\]
be the (ordinary) Fourier transform. Since
\[
\mathcal F_y(yD_yw_\s)
=i(\eta\partial_\eta+1)\widetilde a,
\]
the estimates \eqref{eq:iterated-module-estimate} control arbitrary
powers of $\eta\partial_\eta$ applied to $\widetilde a$.
More precisely, on a dyadic region
$r\leq|\eta|\leq2r$, $r\geq1$, we have
\[
\partial_\eta^\alpha
=\eta^{-\alpha}P_\alpha(\eta\partial_\eta)
\]
for a polynomial $P_\alpha$. After the rescaling $\eta=r\theta$,
using the one-dimensional Sobolev embedding on the fixed region
$1\leq|\theta|\leq2$, for $s\geq 0$, we get therefore 
\[
\sup_{r\leq|\eta|\leq2r}
|\partial_\eta^\alpha\widetilde a(\eta,h)|
\lesssim
r^{-s-\alpha-\frac12}
\sum_{j=0}^{\alpha+1}
\|(yD_y)^jw_\s\|_{H^s}.
\]
Indeed, the $H^s$ weight contributes $r^{-s}$, there is a further $r^{-\alpha}$ factor
due to $\eta^{-\alpha}$, and rescaling the
$L^2$ norm produces a $r^{-1/2}$  factor.

Let $s=1/2-\varepsilon$, with $0<\varepsilon<1/2$. Since
\[
\|(yD_y)^jw_\s\|_{H^s}
\lesssim h^{-s}\|A^jw\|_{H_h^s}
=O(h^\infty),
\]
we obtain
\[
\widetilde a=O_{S^{-1+\varepsilon}}(h^\infty)
\]
for $|\eta|\geq1$. On bounded $\eta$-intervals the same conclusion
follows from the compact support of $w_\s$ and the
$O_{H^s}(h^\infty)$ bound.

We now pass to semiclassical Fourier variables. Fix $R\geq1$ and decrease
$h_0$ so that $2\pi h_0<R/2$. Choose
$\theta_\pm\in C^\infty(\re;[0,1])$ such that
\[
\supp\theta_\pm\subset\{\pm\x\geq R\},
\quad
\theta_\pm=1 \text{ on }\{\pm\x\geq2R\}.
\]
Define
\[
a_{\s_2}(\x,h)
:=\theta_{\s_2}(\x)h^{-1/2}
\widetilde a(\x/h,h).
\]
For $m=-1+\varepsilon$ and $|\x|\geq R$, the amplitude before
multiplication by $\theta_{\s_2}$ satisfies
\[
\begin{split}
	\langle\x\rangle^{-m+\alpha}
	\big|\partial_\x^\alpha(h^{-1/2}\widetilde a(\x/h,h))\big|
	&=
	h^{-1/2-\alpha}
	\langle\x\rangle^{-m+\alpha}
	\big|(\partial_\eta^\alpha\widetilde a)(\x/h,h)\big| \\
	&\lesssim
	h^{-1/2-m}
	\sup_{\eta}
	\langle\eta\rangle^{-m+\alpha}
	|\partial_\eta^\alpha\widetilde a(\eta,h)|.
\end{split}
\]
Since $-1/2-m=1/2-\varepsilon$, the fixed power of $h$ is absorbed
by the $O(h^\infty)$ bound. Applying Leibniz' rule to the fixed cutoff
$\theta_{\s_2}$, we conclude that
\[
a_{\s_2}=O_{S^{-1+\varepsilon}}(h^\infty).
\]

The part not picked up by $\theta_++\theta_-$ is supported in the fixed
region $|\x|\leq 2R$. Its inverse semiclassical Fourier transform is obtained
by applying to $w$ a semiclassical Fourier multiplier with symbol supported
in $|\x|\leq 2R$. Since $w=O_{H_h^s}(h^\infty)$, the resulting function is
$O_{C^\infty}(h^\infty)$: each $x$-derivative introduces at most a fixed
power of $h^{-1}$, which is absorbed by the $O(h^\infty)$ estimate.
Consequently, near $x=\s$,
\[
w=v_++v_-+O_{C^\infty}(h^\infty),
\]
where
\begin{align}\label{eq:homogeneous-conormal-representation}
	v_{\s_2}(x)
	&=\frac{1}{\sqrt{2\pi h}}
	\int e^{\frac{i}{h}(x-\s)\x}
	a_{\s_2}(\x,h)\,d\x,
	\quad
	a_{\s_2}=O_{S^{-1+\varepsilon}}(h^\infty),
\end{align}
and $\supp a_{\s_2}\subset\{\s_2\x\geq R\}$.

\proofstep[Exact phase and transport equation]
Set $q_{\s,\s_2}(\x)=F_{\s,\s_2}(\x)-\s\x$.
The explicit formula \eqref{eq:phase2} implies
\begin{align}\label{eq:conormal-phase-symbol-bounds}
|\partial_\x^j q_{\s,\s_2}(\x,h)|
\leq C_j\langle\x\rangle^{-1-j},
\quad\s_2\x\geq R,\quad j\in\mathbb N_0,
\end{align}
uniformly for $0<h\leq h_0$. In particular,
$|\partial_\x^\alpha e^{iq_{\s,\s_2}/h}|
\lesssim_\alpha h^{-\alpha}\langle\x\rangle^{-\alpha}$.
By Leibniz' rule, multiplication by this factor preserves
$O_{S^m}(h^\infty)$. Hence
\begin{align}\label{eq:exact-phase-conormal-representation}
&v_{\s_2}=J_{\s,\s_2}(b_{\s_2}),\quad
b_{\s_2}=e^{iq_{\s,\s_2}/h}a_{\s_2}
=O_{S^{-1+\varepsilon}}(h^\infty),\nonumber\\
&J_{\s,\s_2}(b)(x)
:=\frac{1}{\sqrt{2\pi h}}\int
 e^{\frac{i}{h}(x\x-F_{\s,\s_2}(\x))}b(\x,h)\,d\x.
\end{align}

We now use the transport equation from Step~1 of the
proof of Proposition~\ref{prop:quasimode}. Put $c=2\pi h$ and
\[
\alpha_{\s,\s_2}(\x)
:=\partial_xp_h(F'_{\s,\s_2}(\x),\x)
=2\s\sqrt{(\x^2+1)(\x^2-c^2)},
\quad
\mathcal L_{H_{p_h}}b:=\alpha_{\s,\s_2}b'
+\tfrac12\alpha'_{\s,\s_2}b.
\]
We have $\mathcal F_hv_{\s_2}=\theta_{\s_2}\mathcal F_hw$ by construction and
$\mathcal F_hf_h=O_{S^{-\infty}}(h^\infty)$ by the bound on $f_h$. Commuting
$\mathcal F_hP(h)\mathcal F_h^{-1}$ with $\theta_{\s_2}$ produces only terms
supported in $R\leq|\x|\leq2R$, where the Fourier amplitude and all its
derivatives are $O(h^\infty)$. Using
$\mathcal F_hv_{\s_2}=e^{-iF_{\s,\s_2}/h}b_{\s_2}$ in the transport identity,
we therefore obtain
\[
ih \mathcal L_{H_{p_h}}b_{\s_2}
-h^2\partial_\x\bigl((\x^2-c^2)\partial_\x b_{\s_2}\bigr)
=O_{S^{-\infty}}(h^\infty).
\]
Since $\partial_\x((\x^2-c^2)\partial_\x)$ preserves each symbol class
$S^m$, we conclude that
\begin{align}\label{eq:transport-symbol-error}
\mathcal L_{H_{p_h}}b_{\s_2}
=O_{S^{-1+\varepsilon}}(h^\infty).
\end{align}

Let $\mathcal G_{\s,\s_2}=|\alpha_{\s,\s_2}|$.
Since $\operatorname{sgn}\alpha_{\s,\s_2}=\s$ on $\s_2\x\geq R$,
\[
\mathcal L_{H_{p_h}}b
=\s\mathcal G_{\s,\s_2}^{1/2}
\partial_\x(\mathcal G_{\s,\s_2}^{1/2}b).
\]
As $\mathcal G_{\s,\s_2}^{-1/2}\in S^{-1}$,
\eqref{eq:transport-symbol-error} implies
\[
\partial_\x(\mathcal G_{\s,\s_2}^{1/2}b_{\s_2})
=O_{S^{-2+\varepsilon}}(h^\infty).
\]
Since $\varepsilon<1$, this is integrable in each of the regions
$\s_2\x\geq R$. Let
$\gamma_{\s_2}(h)$ be the limit of $\mathcal G_{\s,\s_2}^{1/2}b_{\s_2}$ as
$\s_2\x\to\infty$. Evaluating at a fixed point and integrating the derivative
to infinity shows that $\gamma_{\s_2}(h)=O(h^\infty)$. Integrating instead
from $\x$ to infinity proves
\begin{align}\label{eq:transported-conormal-amplitude}
b_{\s_2}(\x,h)
=\gamma_{\s_2}(h)\mathcal G_{\s,\s_2}(\x,h)^{-1/2}
+O_{S^{-2+\varepsilon}}(h^\infty).
\end{align}
Thus the transport equation determines the leading term and improves the
remainder to order $-2+\varepsilon$.

\proofstep[Logarithmic and Heaviside terms]
By \eqref{eq:transported-conormal-amplitude}, after returning from the
exact phase to the homogeneous phase $(x-\s)\x$, the amplitude in the
region $\s_2\x\geq R$ is
\[
\gamma_{\s_2}(h)e^{-iq_{\s,\s_2}/h}
\mathcal G_{\s,\s_2}^{-1/2}
+O_{S^{-2+\varepsilon}}(h^\infty).
\]
At fiber infinity,
\[
\mathcal G_{\s,\s_2}^{-1/2}
-2^{-1/2}|\x|^{-1}\in S^{-3}.
\]
Moreover, by \eqref{eq:conormal-phase-symbol-bounds},
\[
|\partial_\x^\alpha(e^{-iq_{\s,\s_2}/h}-1)|
\lesssim_\alpha
h^{-1-\alpha}\langle\x\rangle^{-1-\alpha},
\quad \s_2\x\geq R.
\]
Since $\gamma_{\s_2}(h)=O(h^\infty)$, all fixed powers of $h^{-1}$
are harmless. Consequently,
\[
\gamma_{\s_2}(h)e^{-iq_{\s,\s_2}/h}
\mathcal G_{\s,\s_2}^{-1/2}
=
2^{-1/2}\gamma_{\s_2}(h)|\x|^{-1}
+O_{S^{-2+\varepsilon}}(h^\infty)
\]
for $\s_2\x\geq R$.

Combining the contributions from $\s_2=+$ and $\s_2=-$, the leading
amplitude can be written as
\[
\theta_\infty(\x)
\bigl(
\gamma^{(0)}_\s(h)|\x|^{-1}
+\gamma^{(1)}_\s(h)\x^{-1}
\bigr),
\quad
\theta_\infty=\theta_++\theta_-,
\]
where
\[
\gamma^{(0)}_\s
=2^{-3/2}(\gamma_++\gamma_-),
\quad
\gamma^{(1)}_\s
=2^{-3/2}(\gamma_+-\gamma_-).
\]
The terms supported in the regions $R\leq|\x|\leq2R$ have amplitudes
which are $O(h^\infty)$ together with all derivatives, and their inverse
semiclassical Fourier transforms are therefore
$O_{C^\infty}(h^\infty)$.

Since $\theta_\infty=1$ for $|\x|\geq2R$, modifying
$\theta_\infty|\x|^{-1}$ or $\theta_\infty\x^{-1}$ at bounded
frequencies changes their inverse Fourier transforms only by a smooth
function. Using a rescaled variant of the identities for the inverse Fourier transforms of $|\x|^{-1}$ and  $\x^{-1}$, we
obtain, for $y=x-\s$,
\[
\mathcal F_h^{-1}(\theta_\infty|\x|^{-1})
=
-\frac{2}{\sqrt{2\pi h}}\log|y|+r_0(y;h),
\]
and
\[
\mathcal F_h^{-1}(\theta_\infty\x^{-1})
=
\frac{i\pi}{\sqrt{2\pi h}}\operatorname{sgn}(y)+r_1(y;h),
\]
where $r_0(\,\cdot\,;h)$ and $r_1(\,\cdot\,;h)$ are smooth near
$y=0$. Their local $C^k$ seminorms have at most polynomial and
logarithmic growth in $h^{-1}$. Since
$\gamma^{(0)}_\s,\gamma^{(1)}_\s=O(h^\infty)$, the corresponding
smooth terms are $O_{C^\infty}(h^\infty)$.

Using $\operatorname{sgn}(y)=2H(y)-1$, the singular part is therefore
\[
a^{(0)}_\s(h)\log|y|+a^{(1)}_\s(h)H(y),
\]
where
\[
a^{(0)}_\s(h)
=
-\frac{2}{\sqrt{2\pi h}}\gamma^{(0)}_\s(h),
\quad
a^{(1)}_\s(h)
=
\frac{2i\pi}{\sqrt{2\pi h}}\gamma^{(1)}_\s(h).
\]
In particular,
\[
a^{(0)}_\s(h),a^{(1)}_\s(h)=O(h^\infty).
\]
The constant term arising from
$\operatorname{sgn}(y)=2H(y)-1$ is smooth and is absorbed into the
$O_{C^\infty}(h^\infty)$ remainder.

It remains to consider the inverse Fourier transform of the
$O_{S^{-2+\varepsilon}}(h^\infty)$ remainder. Let
$r=O_{S^{-2+\varepsilon}}(h^\infty)$. For
$0\leq s<3/2-\varepsilon$, semiclassical Plancherel gives
\[
\|\mathcal F_h^{-1}r\|_{H^s}
\lesssim
h^{-s}\|\langle\x\rangle^s r\|_{L^2_\x}
=O(h^\infty),
\]
since $\langle\x\rangle^{s-2+\varepsilon}\in L^2(\re)$ for 
$s<\frac32-\varepsilon$.
Given $0<\delta<3/2$, we take
$s=3/2-\delta$ and
$0<\varepsilon<\min(\delta,1/2)$. This proves
\eqref{eq:boundary-form-conormal}. The case $\delta\geq3/2$ follows
from the corresponding Sobolev inclusions. 
\end{proof}

\subsection{Proof of completeness}\label{subsection:proof-completeness}
We have now all the ingredients at hand to prove  Proposition \ref{prop:asymexpef}.

\begin{proof}[Proof of Proposition~\ref{prop:asymexpef}]
By Lemmas~\ref{lem:semicenergyest} and \ref{lem:localelliptic}, all derivatives
of $u$ are polynomially bounded in $h^{-1}$ on the two exterior half-lines.
The same holds for the model quasimodes, by the WKB representation
\eqref{eq:quasimodeWKB} and the symbol estimates in
Proposition~\ref{prop:quasimode}$(i)$.
On $[2,\infty)$, put $W_0=W_h(u_{+,+},u_{+,-})$.
Using \eqref{eq:quasimodeWKB}, the conjugation symmetries from the proof of
Proposition~\ref{prop:quasimode}, and the definition of $W_h$, we compute
\[
W_0(3)=|\mathfrak c_{+,+}|^2g(3)
\left(h(a_{+,+}\overline{a_{+,+}'}-a_{+,+}'\overline{a_{+,+}})
-2i\f_{+,+}'|a_{+,+}|^2\right)_{x=3}+O(h^\infty).
\]
At $x=3$, Proposition~\ref{prop:quasimode}$(i)$ implies
$a_{+,+}=a_0+O(h\log(1/h))$, also after one $x$-derivative, where
$a_0=|g\f_{+,+}'|^{-1/2}$. Moreover, \eqref{eq:connformula} implies
$|\mathfrak c_{+,+}|^2=1/2+O(h\log(1/h))$.
Since $g\f_{+,+}'a_0^2=1$ at this point,
\[
W_0(3)=-i+O(h\log(1/h)).
\]
The Wronskian identity \eqref{eq:Wronskian-identity} and the residual estimate
\eqref{eq:quasimodeeq} imply $W_0'=O_{\mathcal S}(h^\infty)$ on
$[2,\infty)$. Consequently,
\[
\sup_{x\geq2}|W_0(x)-W_0(3)|
\leq\int_2^\infty|W_0'(t)|\,dt=O(h^\infty),
\]
so $W_0$ is uniformly bounded away from zero for small $h$.

Define
\[
c_{+,+}(x,h)=\frac{W_h(u,u_{+,-})(x)}{W_0(x)},\quad
c_{+,-}(x,h)=-\frac{W_h(u,u_{+,+})(x)}{W_0(x)}.
\]
By Cramer's rule we get 
$u=c_{+,+}(x,h)u_{+,+}+c_{+,-}(x,h)u_{+,-}$, initially with coefficients
of at most polynomial growth in $x$ and $h^{-1}$.
Differentiating these formulas and using \eqref{eq:Wronskian-identity},
$P(h)u=0$, and \eqref{eq:quasimodeeq}, we obtain
\[
\partial_xc_{+,+},\ \partial_xc_{+,-}
=O_{\mathcal S([2,\infty))}(h^\infty).
\]
For instance,
\[
h\partial_xc_{+,+}
=-\frac{uP(h)u_{+,-}}{W_0}-hc_{+,+}\frac{W_0'}{W_0},
\]
and both terms are Schwartz remainders by the polynomial bounds above.
By integrating from $x=3$ we get that the coefficients are uniformly
polynomially bounded in $h^{-1}$ and have limits $c_{+,\s_2}(h)$ at
infinity. By integrating from $x$ to infinity we  obtain
\begin{align}\label{eq:coefficient-limits}
c_{+,\s_2}(x,h)-c_{+,\s_2}(h)
=O_{\mathcal S([2,\infty))}(h^\infty),\quad\s_2\in\{\pm\}.
\end{align}
Multiplying by the model quasimodes gives
\[
u=c_{+,+}(h)u_{+,+}+c_{+,-}(h)u_{+,-}
+O_{\mathcal S([2,\infty))}(h^\infty).
\]
The analogous argument on $(-\infty,-2]$ allows us to  construct $c_{-,+}(h),c_{-,-}(h)$.
Thus the remainder $v$ in Proposition~\ref{prop:asymexpef} satisfies
\begin{align}\label{eq:exterior-remainder}
P(h)v=O_{\mathcal S(\re)}(h^\infty),\quad
v=O_{\mathcal S(\{|x|\geq2\})}(h^\infty).
\end{align}

Starting from \eqref{eq:exterior-remainder}, we apply propagation of
singularities for operators of real principal type
\cite[Thm.~E.47]{DZbook} along finite bicharacteristic segments of $p$.
On every compact exterior interval, both characteristic curves connect
to the region where the remainder is already controlled.
Together with semiclassical elliptic regularity
\cite[Thm.~E.33]{DZbook}, this proves
$v=O_{C^\infty(K)}(h^\infty)$ for every $K\Subset\re\setminus\{\pm1\}$.
In particular, in sufficiently small fixed neighborhoods of the radial
points, $\operatorname{WF}_h(v)$ contains no points other than the radial
points themselves. Thus, we can apply
Propositions~\ref{prop:basic-radial-estimate} and
\ref{prop:conormal-structure}: the former establishes the
$H_h^{1/2-\delta}$ estimate at the radial points, and the latter gives
\eqref{eq:log-Heaviside-representation}.  Using a microlocal partition of unity,
combined with the exterior Schwartz estimate \eqref{eq:exterior-remainder}, gives
\[
v=O_{H_h^{1/2-\delta}(\re)}(h^\infty),\quad\delta>0.
\]
Finally, Proposition~\ref{prop:quasimode}$(ii)$ and Lemma~\ref{lem:quasiortho}
imply
\[
\bigg\|\sum_{\s_1,\s_2}c_{\s_1,\s_2}u_{\s_1,\s_2}\bigg\|_{L^2}^2
\sim\log(1/h)\sum_{\s_1,\s_2}|c_{\s_1,\s_2}|^2.
\]
Since $v=O_{L^2}(h^\infty)$ and $\|u\|_{L^2}=1$, this proves
\eqref{eq:coefsizeasymp}.
\end{proof}

\section{Bohr--Sommerfeld quantization and logarithmic Weyl law}\label{section:Mainproof}

\begin{thm}\label{thm:BSsemiclassical}
Sufficiently large positive eigenvalues of $P_{\rm sa}$ are simple and can be numbered $E_n$, $n\in\frac12\mathbb Z$, $n\gg1$, so that
\begin{align}\label{eq:approximate-BS}
\mathrm{BS}(E_n^{-1/2})=n+O(E_n^{-\infty}),
\end{align}
where we recall that the function $\mathrm{BS}$ is defined in \eqref{eq:BS-function}.
\end{thm}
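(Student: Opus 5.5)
The plan is to impose the boundary conditions \eqref{eq:bd1}--\eqref{eq:bd3} on linear combinations $\sum c_{\s_1,\s_2}u_{\s_1,\s_2}$ of the model quasimodes. By Lemma~\ref{lem:quasimodeboundary} this reduces to a $4\times4$ linear system whose entries are explicit in terms of $\mathfrak c:=\mathfrak c_{+,+}$. Write $\mathfrak c=\frac{\varrho}{\sqrt2}e^{\pi i\mathrm{BS}(h)}$ as in \eqref{eq:connection-BS}.

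\textbf{The linear system.} By Lemma~\ref{lem:quasimodeboundary}, the $\log$-coefficient condition at $x=\s$ reads $c_{\s,+}+c_{\s,-}=0$. So we set $c_{+,+}=-c_{+,-}=\alpha$ and $c_{-,+}=-c_{-,-}=\beta$. The relations $\mathfrak c_{+,-}=\mathfrak c_{-,+}=\overline{\mathfrak c_{-,-}}=\overline{\mathfrak c}$ then turn the two conditions at infinity into
\[
(\alpha-\beta)\,\mathrm{Im}\,\mathfrak c=0,\qquad (\alpha+\beta)\,\mathrm{Re}\,\mathfrak c=0 .
\]
This gives two families of solutions:
\begin{itemize}
\item $\alpha=-\beta$ with $\mathrm{BS}(h)\in\mathbb Z$;
\item $\alpha=\beta$ with $\mathrm{BS}(h)\in\mathbb Z+\frac12$.
\end{itemize}
The determinant vanishes exactly when $\sin(2\pi\mathrm{BS}(h))=0$, which explains $n\in\frac12\mathbb Z$.

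\textbf{Quasimodes give eigenvalues.} Suppose $\mathrm{BS}(h)=n$. Take the corresponding combination $w$ and normalize it using Proposition~\ref{prop:quasimode}(ii) and Lemma~\ref{lem:quasiortho}. Then $w$ satisfies the boundary conditions exactly at $x=\pm1$ and up to $O(h^\infty)$ at infinity; the latter error comes from the $O_{\mathcal S}(h^\infty)$ remainders in \eqref{eq:quasimodeWKB}. We correct $w$ into $D(P_{\rm sa})$ by adding a fixed cutoff times a solution with prescribed asymptotics at infinity, with coefficients $O(h^\infty)$. This yields $\|(P_{\rm sa}-E)w\|=O(E^{-\infty})\|w\|$ with $E=h^{-2}$. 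Since the spectrum is discrete, the spectral theorem then gives an eigenvalue within $O(E^{-\infty})$ of $E$.

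\textbf{Eigenvalues satisfy Bohr--Sommerfeld.} Let $u$ be a normalized eigenfunction with eigenvalue $E=h^{-2}$. Apply Proposition~\ref{prop:asymexpef} to get $u=\sum c_{\s_1,\s_2}u_{\s_1,\s_2}+v$. Now evaluate all boundary functionals on $u$, where they vanish:
\begin{itemize}
\item at $x=\pm1$, the $v$ contribution is $O(h^\infty)$ by \eqref{eq:log-Heaviside-representation};
\item at infinity, it is $O(h^\infty)$ because $v=O_{\mathcal S(\{|x|\geq2\})}(h^\infty)$.
\end{itemize}
Hence the coefficient vector solves the system above up to $O(h^\infty)$. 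Since its norm is $\sim(\log 1/h)^{-1/2}$ by \eqref{eq:coefsizeasymp}, at least one of $|\mathrm{Im}\,\mathfrak c|$, $|\mathrm{Re}\,\mathfrak c|$ must be $O(h^\infty)$. Because $\varrho\to1$, this gives $\mathrm{dist}(\mathrm{BS}(h),\frac12\mathbb Z)=O(h^\infty)$. It also shows that the coefficients lie, modulo $O(h^\infty)$, on the line $\alpha=\mp\beta$ dictated by the parity of $2n$.

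\textbf{Numbering and simplicity.} Using $I(a)\approx\frac{\sqrt a}{2}\log a$, one checks that $E\mapsto\mathrm{BS}(E^{-1/2})$ is strictly increasing for large $E$, with derivative $\sim E^{-1/2}\log E$. The bounds $|\partial_h^\beta d|\lesssim h^{1-\beta}\log(1/h)$ keep this monotonicity intact after the correction $d$. Hence each $n$ corresponds to a unique $E$-interval of width $O(E^{-\infty})$.

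For simplicity, suppose two orthogonal eigenfunctions have eigenvalues in the same such interval. Their coefficient vectors are both $O(h^\infty)$-close to the same one-dimensional line. By almost orthogonality, Lemma~\ref{lem:quasiortho}, their inner product would then be $\sim1$, a contradiction. This gives exactly one simple eigenvalue per $n\gg1$.

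\textbf{Main obstacle.} The delicate step is the converse direction, namely controlling the boundary data of the remainder $v$ at the level of $O(h^\infty)$ rather than merely in Sobolev norms. This is precisely what the log--Heaviside expansion in Proposition~\ref{prop:asymexpef} provides. A secondary technical point is keeping the $O(h^\infty)$ errors uniform while $h$ ranges over the small intervals around each $E_n$.
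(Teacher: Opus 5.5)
\textbf{Gap: excluding two distinct eigenvalues in one window.} Most of your route is the paper's: the exact $4\times4$ boundary system from Lemma~\ref{lem:quasimodeboundary}, quasimodes giving eigenvalues, completeness giving an approximate boundary system for eigenfunctions, and monotonicity of $\mathrm{BS}$. The final step has a genuine gap. Your simplicity argument works when two orthogonal eigenfunctions share the same eigenvalue $E$. Both are then expanded by Proposition~\ref{prop:asymexpef} in the same basis $u_{\sigma_1,\sigma_2}(\tilde h)$, $\tilde h=E^{-1/2}$, and Lemma~\ref{lem:quasiortho} applies. It does not exclude two \emph{distinct} eigenvalues $E\neq E'$ in the same $O(E^{-\infty})$-window, and that is exactly what the numbering by $n$ requires. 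In that case $u$ is expanded in the $u_{\sigma_1,\sigma_2}(\tilde h)$ and $u'$ in the $u_{\sigma_1,\sigma_2}(\tilde h')$, with $\tilde h\neq\tilde h'$. Saying both coefficient vectors lie near ``the same line'' then says nothing about $\langle u,u'\rangle$. Lemma~\ref{lem:quasiortho} only controls pairings at a single value of $h$. You need the normalized distinguished combinations $e_j(h)$ (along $(1,-1,\mp1,\pm1)$) at $\tilde h$ and $\tilde h'$ to be close in $L^2$.

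\textbf{The missing ingredient.} You need an $h$-variation bound: the model quasimodes must be chosen $C^1$ in $h$ with $\|\partial_h u_{\sigma_1,\sigma_2}(h)\|_{L^2}\le Ch^{-M}$. This is Lemma~\ref{lem:h-variation-quasimodes}. Take an eigenvalue $E$ in the window $(h_n^{-2}-h_n^K,h_n^{-2}+h_n^K)$. Then $|E^{-1/2}-h_n|=O(h_n^{K+3})$, hence $\|e_j(E^{-1/2})-e_j(h_n)\|_{L^2}=O(h_n^{K+3-M})$. So for $K>M$ every such normalized eigenfunction is $o(1)$-close to the fixed line $\mathbb C e_j(h_n)$, and two orthogonal ones cannot fit (Proposition~\ref{prop:rank-one-windows}).

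This bound is not a formality here. The quasimodes live on a non-compact region, with amplitude decaying only like $|x|^{-1}$ through the scale $|x|\sim h^{-1}$. Differentiating $e^{i\varphi_{\sigma_1,\sigma_2}/h}$ in $h$ must cost only a polynomial loss, which requires $|h\partial_h\varphi_{\sigma_1,\sigma_2}-\varphi_{\sigma_1,\sigma_2}|\lesssim L_h(x)$. This holds because the linear part $2\pi h\sigma_2x$ of the phase is annihilated by $h\partial_h-1$. You also need $h$-derivative symbol bounds for the WKB and Fourier-side transport coefficients and for the representation near $x=\pm1$. Your remark about keeping $O(h^\infty)$ errors uniform does not address this: uniform constants in Proposition~\ref{prop:asymexpef} never compare the bases at two different values of $h$.

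\textbf{No correction at infinity is needed.} The $O_{\mathcal S}(h^\infty)$ remainders in \eqref{eq:quasimodeWKB} are rapidly decreasing for each fixed $h$. They therefore contribute nothing to the limits \eqref{eq:bd2}--\eqref{eq:bd3}, and the boundary values in Lemma~\ref{lem:quasimodeboundary} are exact. When $\mathrm{BS}(h)\in\frac12\mathbb Z$ your combination already lies in $D(P_{\rm sa})$ (Proposition~\ref{prop:quasimodeBScondition}).

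\textbf{Monotonicity.} Derive it from the explicit integral for $I'(a)$, not by differentiating the asymptotics of $I$.
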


\subsection{Boundary equations}\label{subsection:approximate-boundary-system}

We now impose the boundary conditions characterizing the self-adjoint extension $P_{\rm sa}$.

\begin{prop}\label{prop:quasimodeBScondition}
Let $c_{\s_1,\s_2}\in\mathbb C$ be not all zero and suppose that
\begin{align*}
u-\sum_{\s_1,\s_2\in\{\pm\}}c_{\s_1,\s_2}u_{\s_1,\s_2}\in\mathcal S(\re).
\end{align*}
Then $u\in D(P_{\rm sa})$ if and only if $\mathrm{BS}(h)\in\frac12\mathbb Z$ and
\begin{align}\label{eq:semiBS}
(c_{+,+},c_{+,-},c_{-,+},c_{-,-})\in
\begin{cases}
\mathbb C(1,-1,-1,1),&\mathrm{BS}(h)\in\mathbb Z,\\
\mathbb C(1,-1,1,-1),&\mathrm{BS}(h)\in\mathbb Z+\frac12.
\end{cases}
\end{align}
In particular, a non-zero admissible coefficient vector exists if and only if $\mathrm{BS}(h)\in\frac12\mathbb Z$.
\end{prop}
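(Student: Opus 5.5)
The plan is to reduce membership in $D(P_{\rm sa})$ to a finite linear system for the coefficients, using the explicit boundary values of the model quasimodes from Lemma~\ref{lem:quasimodeboundary}, and then to solve that system by hand.

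\proofstep[Reduction to the quasimode sum]
First I check that $u\in D_{\rm max}(P)$. Write $u=\sum c_{\s_1,\s_2}u_{\s_1,\s_2}+f$ with $f\in\mathcal S(\re)$.
\begin{itemize}
\item Each $u_{\s_1,\s_2}$ lies in $L^2$ by Proposition~\ref{prop:quasimode}.
\item $Pu_{\s_1,\s_2}=h^{-2}\bigl(P(h)u_{\s_1,\s_2}+u_{\s_1,\s_2}\bigr)$, and $P(h)u_{\s_1,\s_2}=O_{\mathcal S(\re)}(h^\infty)$ holds globally by \eqref{eq:quasimodeeq}, in particular near $x=\pm1$. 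Hence $Pu_{\s_1,\s_2}\in L^2$.
\item $Pf$ is Schwartz.
\end{itemize}
So the question is only whether the boundary functionals \eqref{eq:bd1}--\eqref{eq:bd3} vanish. These functionals are linear. They annihilate $f$: near $x=\pm1$ we have $(x\mp1)\partial_x f\to0$, and at infinity each expression is bounded by $|x|(|f|+|f'|)\to0$. Thus the conditions are exactly the vanishing of the corresponding linear combinations of the limits computed in Lemma~\ref{lem:quasimodeboundary}.

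\proofstep[The linear system]
At $x=\s$, Lemma~\ref{lem:quasimodeboundary} gives
$$\lim_{x\to\s}(x-\s)\partial_x u=-(2\sqrt{\pi h})^{-1}\bigl(c_{\s,+}+c_{\s,-}\bigr).$$
Hence \eqref{eq:bd1} is equivalent to
$$c_{+,-}=-c_{+,+}=:-\alpha,\qquad c_{-,-}=-c_{-,+}=:-\beta.$$
At infinity, the limits at $+\infty$ and $-\infty$ differ only by the overall sign $\mp$. Therefore \eqref{eq:bd2} at both ends amounts to the single equation $\sum_{\s_1,\s_2}c_{\s_1,\s_2}\mathfrak c_{\s_1,\s_2}=0$. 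Likewise, \eqref{eq:bd3} amounts to $\sum_{\s_1,\s_2}\s_2 c_{\s_1,\s_2}\mathfrak c_{\s_1,\s_2}=0$.

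Now insert the symmetries
$$\mathfrak c_{+,-}=\mathfrak c_{-,+}=\overline{\mathfrak c},\qquad \mathfrak c_{-,-}=\mathfrak c,\qquad \mathfrak c:=\mathfrak c_{+,+},$$
together with $\mathfrak c=\frac{\varrho}{\sqrt2}e^{\pi i\mathrm{BS}(h)}$ from \eqref{eq:connection-BS}, where $\varrho>0$. The two equations become
$$(\alpha-\beta)(\mathfrak c-\overline{\mathfrak c})=0,\qquad (\alpha+\beta)(\mathfrak c+\overline{\mathfrak c})=0,$$
that is,
$$(\alpha-\beta)\sin(\pi\mathrm{BS})=0,\qquad (\alpha+\beta)\cos(\pi\mathrm{BS})=0.$$

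\proofstep[Case analysis]
\begin{itemize}
\item If $\mathrm{BS}\notin\frac12\mathbb Z$, both trigonometric factors are nonzero. Then $\alpha=\beta=0$, so all coefficients vanish, contradicting the hypothesis.
\item If $\mathrm{BS}\in\mathbb Z$, then $\sin(\pi\mathrm{BS})=0$ and $\cos(\pi\mathrm{BS})\neq0$. This forces $\beta=-\alpha$, giving $(c_{+,+},c_{+,-},c_{-,+},c_{-,-})=\alpha(1,-1,-1,1)$.
\item If $\mathrm{BS}\in\mathbb Z+\frac12$, then $\cos(\pi\mathrm{BS})=0$ and $\sin(\pi\mathrm{BS})\neq0$. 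This forces $\beta=\alpha$, giving $\alpha(1,-1,1,-1)$.
\end{itemize}
Conversely, in each case these vectors satisfy all the equations, so the corresponding $u$ lies in $D(P_{\rm sa})$. This proves \eqref{eq:semiBS} and the final assertion.

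\proofstep[Expected difficulty]
The main obstacle is the careful sign bookkeeping in Step~2, namely translating the limits of Lemma~\ref{lem:quasimodeboundary} into exactly two independent equations at infinity. This requires checking three things:
\begin{itemize}
\item the even- and odd-part functionals at $x\to+\infty$ and $x\to-\infty$ give proportional, not independent, conditions;
\item the factor $i\s_2$ in the odd functional is correctly tracked;
\item the conjugation relations among the $\mathfrak c_{\s_1,\s_2}$ are used with the right signs.
\end{itemize}
I would recheck these directly from the asymptotics of $u_{\rm even}$ and $u_{\rm odd}$ derived in the proof of Lemma~\ref{lem:quasimodeboundary}.
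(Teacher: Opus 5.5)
Your proposal is correct and follows the paper's proof essentially step for step. You use Lemma~\ref{lem:quasimodeboundary} together with the fact that Schwartz functions satisfy all boundary conditions to reduce to the linear system, eliminate $c_{\pm,-}$ via the conditions at $x=\pm1$, and factor the conditions at infinity as $(\mathfrak c\mp\overline{\mathfrak c})(\alpha\mp\beta)=0$. Your explicit check that $u\in D_{\max}(P)$ and your rewriting in terms of $\sin(\pi\mathrm{BS})$ and $\cos(\pi\mathrm{BS})$ are harmless additions that the paper leaves implicit.
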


\begin{proof}
Every Schwartz function satisfies the four boundary conditions, so Lemma \ref{lem:quasimodeboundary} shows that $u\in D(P_{\rm sa})$ is equivalent to
\begin{equation}\label{eq:exact-boundary-system}
\begin{aligned}
c_{+,+}+c_{+,-}&=0,\quad
c_{-,+}+c_{-,-}=0,\\
\sum_{\s_1,\s_2}\mathfrak c_{\s_1,\s_2}c_{\s_1,\s_2}&=0,\quad
\sum_{\s_1,\s_2}\s_2\mathfrak c_{\s_1,\s_2}c_{\s_1,\s_2}=0.
\end{aligned}
\end{equation}
Put $A=c_{+,+}$ and $B=c_{-,+}$. From the first line we obtain $c_{+,-}=-A$ and $c_{-,-}=-B$. Writing $\mathfrak c=\mathfrak c_{+,+}$ and using the symmetries in \eqref{eq:connformula}, the second line becomes
\begin{align*}
(\mathfrak c-\overline{\mathfrak c})(A-B)=0,\quad
(\mathfrak c+\overline{\mathfrak c})(A+B)=0.
\end{align*}
A non-zero solution exists precisely when $\mathfrak c\in\mathbb R\cup i\mathbb R$, which, by \eqref{eq:connection-BS}, is equivalent to $\mathrm{BS}(h)\in\frac12\mathbb Z$. If $\mathrm{BS}(h)\in\mathbb Z$, then $\mathfrak c$ is real and the system forces $B=-A$; if $\mathrm{BS}(h)\in\mathbb Z+\frac12$, then $\mathfrak c$ is purely imaginary and we get $B=A$ instead. These are exactly the two lines in \eqref{eq:semiBS}.
\end{proof}

\begin{prop}\label{prop:approximate-boundary-system}
Let $u\in D(P_{\rm sa})$ be a normalized eigenfunction satisfying $P(h)u=0$, and let $c_{\s_1,\s_2}(h)$ be the coefficients given by Proposition \ref{prop:asymexpef}. Then
\begin{equation}\label{eq:approx-boundary-system}
\begin{aligned}
c_{+,+}+c_{+,-}&=O(h^\infty),\quad
c_{-,+}+c_{-,-}=O(h^\infty),\\
\sum_{\s_1,\s_2}\mathfrak c_{\s_1,\s_2}c_{\s_1,\s_2}&=O(h^\infty),\quad
\sum_{\s_1,\s_2}\s_2\mathfrak c_{\s_1,\s_2}c_{\s_1,\s_2}=O(h^\infty).
\end{aligned}
\end{equation}
Consequently, $\mathrm{BS}(h)=n+O(h^\infty)$
for a unique $n\in\frac12\mathbb Z$.
\end{prop}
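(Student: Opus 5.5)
The plan is to decompose $u=\sum c_{\s_1,\s_2}u_{\s_1,\s_2}+v$ via Proposition~\ref{prop:asymexpef} and evaluate the four boundary functionals \eqref{eq:bd1}--\eqref{eq:bd3} on both sides. Since $u\in D(P_{\rm sa})$, all four functionals vanish on $u$. The functionals are linear, so it suffices to compute them on the model quasimodes, which Lemma~\ref{lem:quasimodeboundary} does, and to show they are $O(h^\infty)$ on $v$. Because the values on the $u_{\s_1,\s_2}$ carry factors $h^{-1/2}$, dividing by $\sqrt{\pi/(2h)}$ or $(2\sqrt{\pi h})^{-1}$ leaves the errors $O(h^\infty)$. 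This gives exactly \eqref{eq:approx-boundary-system}, with the same left-hand sides as \eqref{eq:exact-boundary-system}.

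\textbf{Boundary values of $v$.} At $x=\s$, use \eqref{eq:log-Heaviside-representation}. The function $O_{H^{3/2-\delta}}(h^\infty)$ is $C^{0,\alpha}$ with $H^{1/2-\delta}$ derivative, so I need $\lim (x-\s)\partial_x$ of it to vanish uniformly. Here I would use that $v\in D_{\max}$ near $\s$ and the structure \eqref{eq:max-domain-boundary-form}: the limit of $(x-\s)\partial_x v$ equals $a_\s^{(0)}$, where $a_\s^{(0)}$ is the coefficient of the logarithm. Both $u$ and $\sum c\,u_{\s_1,\s_2}$ have well-defined such coefficients, and the decomposition \eqref{eq:max-domain-boundary-form} is unique, so the coefficient for $v$ is the $a_\s^{(0)}(h)=O(h^\infty)$ of Proposition~\ref{prop:asymexpef}. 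Hence $\lim_{x\to\s}(x-\s)\partial_xv=O(h^\infty)$.

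At $|x|\to\infty$, we have $v=O_{\mathcal S(\{|x|\ge2\})}(h^\infty)$, so $x\partial_x v$ and $xv$ tend to $0$; in fact the functionals vanish exactly on $v$. Combining with Lemma~\ref{lem:quasimodeboundary} gives the four equations. For instance, condition \eqref{eq:bd1} at $x=\s$ gives $-\frac{1}{2\sqrt{\pi h}}\sum_{\s_2}c_{\s,\s_2}+O(h^\infty)=0$. The even and odd conditions, taken at $x\to+\infty$, give $\sum\mathfrak c_{\s_1,\s_2}c_{\s_1,\s_2}$ and $\sum\s_2\mathfrak c_{\s_1,\s_2}c_{\s_1,\s_2}$ up to $O(h^\infty)$. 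Here I must double check that, on the even/odd parts, the quasimodes supported on the side opposite to the one where the limit is taken still contribute through the $u(-x)$ term. The formulas of Lemma~\ref{lem:quasimodeboundary} already include this contribution.

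\textbf{Consequence for $\mathrm{BS}(h)$.} As in Proposition~\ref{prop:quasimodeBScondition}, set $A=c_{+,+}$ and $B=c_{-,+}$. Then $c_{+,-}=-A+O(h^\infty)$ and $c_{-,-}=-B+O(h^\infty)$, and the last two equations become
\[
(\mathfrak c-\overline{\mathfrak c})(A-B)=O(h^\infty),\quad (\mathfrak c+\overline{\mathfrak c})(A+B)=O(h^\infty).
\]
By \eqref{eq:coefsizeasymp}, $|A|+|B|\gtrsim(\log(1/h))^{-1/2}$, so $\max(|A-B|,|A+B|)\gtrsim(\log 1/h)^{-1/2}$. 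Hence $\sin(\pi\mathrm{BS})$ or $\cos(\pi\mathrm{BS})$ is $O(h^\infty)$, using \eqref{eq:connection-BS} and $\varrho\sim 1$. In the first case the bound comes from $\mathfrak c-\overline{\mathfrak c}=i\sqrt2\varrho\sin(\pi\mathrm{BS})$, and in the second from $\mathfrak c+\overline{\mathfrak c}=\sqrt2\varrho\cos(\pi\mathrm{BS})$. Either way $\mathrm{BS}(h)$ is within $O(h^\infty)$ of $\mathbb Z$ or of $\mathbb Z+\frac12$. Thus $\mathrm{BS}(h)=n+O(h^\infty)$ for some $n\in\frac12\mathbb Z$, which is unique for small $h$ since distinct half-integers are separated by $1/2$.

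\textbf{Main obstacle.} The delicate step is the limit at $x=\pm1$ for $v$. It requires that the $H^{3/2-\delta}$ remainder in \eqref{eq:log-Heaviside-representation} carries no residual logarithmic coefficient, and a pointwise limit is not directly controlled by that norm. I would handle it through the uniqueness of the $\log$ coefficient in \eqref{eq:max-domain-boundary-form}. Concretely, $w=(x^2-1)\partial_x v$ is continuous and $w(\s)$ determines the log coefficient. Since $H^{3/2-\delta}\subset C^{0,\alpha}$ has no log part, $w(\s)=2\s\,a_\s^{(0)}(h)$, which is $O(h^\infty)$.
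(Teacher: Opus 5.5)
Your proposal is correct and follows the paper's proof: you evaluate the boundary functionals on the decomposition from Proposition~\ref{prop:asymexpef} using Lemma~\ref{lem:quasimodeboundary}, then eliminate the coefficients as in Proposition~\ref{prop:quasimodeBScondition}. Two details differ only cosmetically. First, you justify the $x=\pm1$ limit for $v$ through continuity of $(x^2-1)\partial_x v$, which forces the $H^{3/2-\delta}$ remainder to contribute nothing; this makes explicit a step the paper only asserts. Second, your case split on which of $|A\pm B|$ is large is equivalent to the paper's multiplication by complementary factors leading to $\sin(2\pi\mathrm{BS}(h))=O(h^\infty)$.
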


\begin{proof}
The first two equations follow by comparing the logarithmic boundary coefficients at $x=\pm1$: Lemma \ref{lem:quasimodeboundary} gives those of the model quasimodes, while \eqref{eq:log-Heaviside-representation} shows that the remainder contributes only $O(h^\infty)$. The other two equations follow from the boundary conditions at infinity, since the remainder is $O_{\mathcal S}(h^\infty)$ there.

By \eqref{eq:connformula},
$\mathfrak c_{+,-}=\mathfrak c_{-,+}=\overline{\mathfrak c_{+,+}}$,
$\mathfrak c_{-,-}=\mathfrak c_{+,+}$ and
$\mathfrak c_{+,+}=2^{-1/2}\varrho(h)e^{\pi i\mathrm{BS}(h)}$, where
$\varrho(h)>0$ and $\varrho(h)=1+O(h\log(1/h))$. The first line of
\eqref{eq:approx-boundary-system} implies
$c_{+,-}=-c_{+,+}+O(h^\infty)$ and
$c_{-,-}=-c_{-,+}+O(h^\infty)$. Substitution in the second line gives
\begin{align*}
(\mathfrak c_{+,+}-\overline{\mathfrak c_{+,+}})(c_{+,+}-c_{-,+})&=O(h^\infty),\\
(\mathfrak c_{+,+}+\overline{\mathfrak c_{+,+}})(c_{+,+}+c_{-,+})&=O(h^\infty).
\end{align*}
Multiplying by the complementary factors and adding and subtracting yields
\begin{align*}
(\mathfrak c_{+,+}^2-\overline{\mathfrak c_{+,+}}^{\,2})c_{+,+}&=O(h^\infty),\quad
(\mathfrak c_{+,+}^2-\overline{\mathfrak c_{+,+}}^{\,2})c_{-,+}=O(h^\infty).
\end{align*}
By \eqref{eq:coefsizeasymp} and the first line of \eqref{eq:approx-boundary-system},
$|c_{+,+}|+|c_{-,+}|\gtrsim(\log(1/h))^{-1/2}$. Hence
\begin{align*}
\mathfrak c_{+,+}^2-\overline{\mathfrak c_{+,+}}^{\,2}
=i\varrho(h)^2\sin(2\pi \mathrm{BS}(h))=O(h^\infty).
\end{align*}
The zeros of the sine are simple, so this proves the assertion.
\end{proof}

\subsection{Spectral localization}\label{subsection:spectral-localization}

\begin{lem}\label{lem:quasimodeimpev}
Suppose that there exists $u\in D(P_{\rm sa})$ such that
\begin{align*}
P(h)u=O_{L^2}(h^\infty),\quad \|u\|_{L^2(\re)}\sim1.
\end{align*}
Then there exists $E(h)\in\sigma(P_{\rm sa})$ such that $E(h)-h^{-2}=O(h^\infty)$.
\end{lem}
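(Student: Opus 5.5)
This is the standard quasimode-to-spectrum argument for self-adjoint operators, and I would run it directly on $P_{\rm sa}$. Since $u\in D(P_{\rm sa})$ and $P(h)=h^2(P-h^{-2})$, the hypothesis reads
\[
\|(P_{\rm sa}-h^{-2})u\|_{L^2}=h^{-2}\|P(h)u\|_{L^2}=O(h^\infty).
\]
In particular $\|(P_{\rm sa}-h^{-2})u\|_{L^2}\leq C_Nh^N\|u\|_{L^2}$ for every $N$, where I use $\|u\|_{L^2}\sim1$ to pass from an absolute bound to a relative one. Note that $P_{\rm sa}$ is the self-adjoint operator from \cite{CM} with domain described in \S\ref{subsection:boundary-data}, so the spectral theorem applies.

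Next I would argue by contradiction using the resolvent bound. For self-adjoint $T$ and real $\lambda$,
\[
\|(T-\lambda)^{-1}\|=\operatorname{dist}(\lambda,\sigma(T))^{-1}.
\]
If $\operatorname{dist}(h^{-2},\sigma(P_{\rm sa}))>\varepsilon$, then every $w\in D(P_{\rm sa})$ satisfies $\|w\|\leq\varepsilon^{-1}\|(P_{\rm sa}-h^{-2})w\|$. Applying this to $w=u$ gives
\[
\operatorname{dist}(h^{-2},\sigma(P_{\rm sa}))\leq\frac{\|(P_{\rm sa}-h^{-2})u\|}{\|u\|}=O(h^\infty).
\]
Equivalently, one can use the spectral measure $\mu_u$ of $u$: if no spectrum lay in $[h^{-2}-\varepsilon,h^{-2}+\varepsilon]$, then
\[
\|(P_{\rm sa}-h^{-2})u\|^2=\int|\lambda-h^{-2}|^2\,d\mu_u\geq\varepsilon^2\|u\|^2 .
\]

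Finally, the spectrum of $P_{\rm sa}$ is discrete (\cite{CM}), hence closed, so the distance is attained at some $E(h)\in\sigma(P_{\rm sa})$. This gives $|E(h)-h^{-2}|=O(h^\infty)$, with constants $C_N$ uniform in $h$ because they come only from the hypothesis.

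I do not expect a real obstacle. The only points to check are that $u$ genuinely lies in $D(P_{\rm sa})$, so that $P(h)u$ is computed as $h^2(P_{\rm sa}-h^{-2})u$ and not merely distributionally (this is given by hypothesis), and that the relative error bound uses the lower bound $\|u\|\gtrsim1$.
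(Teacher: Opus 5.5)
Your proposal is correct and follows essentially the same argument as the paper: the spectral theorem gives $\operatorname{dist}(h^{-2},\sigma(P_{\rm sa}))\,\|u\|_{L^2}\leq h^{-2}\|P(h)u\|_{L^2}=O(h^\infty)$, and closedness of the spectrum then yields $E(h)$. One small remark: you do not need discreteness here, since the spectrum of any self-adjoint operator is closed and nonempty.
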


\begin{proof}
By the spectral theorem,
\begin{align*}
\operatorname{dist}(h^{-2},\sigma(P_{\rm sa}))\|u\|_{L^2}
\leq\|(P_{\rm sa}-h^{-2})u\|_{L^2}
=h^{-2}\|P(h)u\|_{L^2}=O(h^\infty).
\end{align*}
Since the spectrum is closed, one may choose $E(h)\in\sigma(P_{\rm sa})$ realizing this distance.
\end{proof}

\begin{thm}\label{thm:evnearBS}
Recall that $\mathrm{BS}(h)$ is defined in \eqref{eq:BS-function}. If $\mathrm{BS}(h)\in\frac12\mathbb Z$, then, for every $N>0$ and all sufficiently small $h>0$, the interval
\begin{align*}
(h^{-2}-h^N,h^{-2}+h^N)
\end{align*}
contains an eigenvalue of $P_{\rm sa}$.
\end{thm}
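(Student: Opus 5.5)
The plan is to build an explicit quasimode in $D(P_{\rm sa})$ from the model quasimodes, then apply Lemma~\ref{lem:quasimodeimpev}. Suppose $\mathrm{BS}(h)\in\frac12\mathbb Z$. Proposition~\ref{prop:quasimodeBScondition} gives an explicit admissible coefficient vector: $(1,-1,-1,1)$ if $\mathrm{BS}(h)\in\mathbb Z$ and $(1,-1,1,-1)$ if $\mathrm{BS}(h)\in\mathbb Z+\frac12$. We set
\[
\tilde u=\sum_{\s_1,\s_2}c_{\s_1,\s_2}u_{\s_1,\s_2}
\]
with these coefficients. Then $\tilde u-\sum c_{\s_1,\s_2}u_{\s_1,\s_2}=0\in\mathcal S(\re)$ trivially, so the proposition gives $\tilde u\in D(P_{\rm sa})$. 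One caveat: we must check that each $u_{\s_1,\s_2}$ lies in $D_{\rm max}(P)$. This holds because $u_{\s_1,\s_2}\in L^2$ and, by \eqref{eq:quasimodeeq}, $P(h)u_{\s_1,\s_2}\in\mathcal S(\re)$ (interpreted distributionally across $x=\pm1$). Since the construction gives the representation \eqref{eq:quasimodeLagrangian} near $x=\s_1$ with $b\in S^{-1}$, the identity $P(h)J(b)=ihJ(\mathcal L b)+h^2J(r)$ holds in distributions, so no delta-type terms arise at $x=\pm1$.

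Next, the normalization. By Proposition~\ref{prop:quasimode}(ii) and Lemma~\ref{lem:quasiortho}, $\|\tilde u\|_{L^2}^2=4\|u_{+,+}\|^2+O(h^\infty)\sim\log(1/h)$. We put $u=\tilde u/\|\tilde u\|_{L^2}$, so that $\|u\|_{L^2}=1$. Then
\[
P(h)u=\|\tilde u\|^{-1}\sum c_{\s_1,\s_2}P(h)u_{\s_1,\s_2}=O_{L^2}(h^\infty),
\]
again by \eqref{eq:quasimodeeq}, because $O_{\mathcal S(\re)}(h^\infty)$ implies $O_{L^2}(h^\infty)$.

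Lemma~\ref{lem:quasimodeimpev} then gives $E(h)\in\sigma(P_{\rm sa})$ with $|E(h)-h^{-2}|\le h^{-2}\|P(h)u\|_{L^2}=O(h^\infty)$, hence $<h^N$ for $h$ small. Since \cite{CM} shows the spectrum of $P_{\rm sa}$ is discrete, $E(h)$ is an eigenvalue, which proves the theorem.

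The only delicate point is the first step: justifying $u_{\s_1,\s_2}\in D_{\rm max}(P)$. This amounts to verifying that the quasimode equation holds across the degeneracy $x=\s_1$ in the distributional sense, with $P u_{\s_1,\s_2}\in L^2$. I would handle it by noting that the Fourier-side identity for $J(b)$ is exact on $\mathcal S'$, and that $J(r_j)$, $J(\mathcal L b_j)$ with symbols in $S^{-1-j}$ are locally $L^2$ for $j\ge1$. For the top-order remainder I would take $N$ large enough that $h^{N+2}J(r_N)$ is in $L^2$ and $O(h^\infty)$.
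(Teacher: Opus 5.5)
Your proof is correct and follows the paper's argument: you take the admissible coefficient vector from Proposition~\ref{prop:quasimodeBScondition}, use Proposition~\ref{prop:quasimode} and Lemma~\ref{lem:quasiortho} to get an $O(h^\infty)$ residual and $L^2$ norm of size $\sqrt{\log(1/h)}$, normalize, and conclude with Lemma~\ref{lem:quasimodeimpev}. Two of your remarks make explicit what the paper leaves implicit: that each $u_{\s_1,\s_2}$ lies in $D_{\max}(P)$ because the Fourier-side identity for $J(b)$ is exact in $\mathcal S'$, and that the discreteness of the spectrum from \cite{CM} turns the spectral point into an eigenvalue.
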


\begin{proof}
By Proposition \ref{prop:quasimodeBScondition} there exists a non-zero coefficient vector for which
\begin{align*}
u=\sum_{\s_1,\s_2}c_{\s_1,\s_2}u_{\s_1,\s_2}\in D(P_{\rm sa}).
\end{align*}
By Proposition \ref{prop:quasimode}, $P(h)u=O_{\mathcal S}(h^\infty)$, and Proposition \ref{prop:quasimode}$(ii)$ together with Lemma \ref{lem:quasiortho},
\begin{align*}
\|u\|_{L^2}^2\sim\log(1/h)\sum_{\s_1,\s_2}|c_{\s_1,\s_2}|^2.
\end{align*}
In view of Lemma \ref{lem:quasimodeimpev} this proves the assertion.
\end{proof}

\begin{lem}\label{lem:BSmonotone}
For sufficiently small $h>0$,
\begin{align}\label{eq:BS-derivative}
-\mathrm{BS}'(h)\sim h^{-2}\log(1/h).
\end{align}
In particular, $\mathrm{BS}$ is strictly decreasing. If $\mathrm{BS}(h)=n$ and $\mathrm{BS}(h')=n'$ for distinct $n,n'\in\frac12\mathbb Z$, with $h\sim h'$, then
\begin{align}\label{eq:root-separation}
|h-h'|\gtrsim h^2(\log(1/h))^{-1}.
\end{align}
\end{lem}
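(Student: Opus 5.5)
We differentiate the two pieces of $\mathrm{BS}(h)=2I(1+\frac{1}{4\pi^2h^2})+\frac14+d(h)$ separately. By Lemma~\ref{lem:first-log-correction-d} and the bound $|\partial_h d(h)|\lesssim \log(1/h)$ recorded after \eqref{eq:def-d}, the remainder satisfies $d'(h)=O(\log(1/h))$. This is negligible compared with $h^{-2}\log(1/h)$. It therefore suffices to show that $-\frac{d}{dh}\,2I(1+\frac{1}{4\pi^2h^2})\sim h^{-2}\log(1/h)$.

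\emph{Step 1: the derivative of $I$.} For $a>0$ we differentiate under the integral sign in \eqref{eq:Iadef}:
\[
I'(a)=\frac12\int_1^\infty\frac{dx}{\sqrt{(x^2-1)(x^2+a-1)}}.
\]
This integral converges, since near $x=1$ the integrand behaves like $(x-1)^{-1/2}$ and at infinity like $x^{-2}$. Differentiating under the integral is justified by dominated convergence on compact $a$-intervals.

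We then estimate it for $a\to\infty$ by splitting the integral into three ranges.
\begin{itemize}
\item On $[1,2]$ the contribution is $O(a^{-1/2})$.
\item On $[2,\sqrt a]$ the integrand is comparable to $x^{-1}a^{-1/2}$, which gives $\sim a^{-1/2}\log a$.
\item On $[\sqrt a,\infty)$ it is comparable to $x^{-2}$, which gives $O(a^{-1/2})$.
\end{itemize}
Since the integrand is positive, these bounds give two-sided estimates, so
\[
I'(a)\sim a^{-1/2}\log a \quad (a\to\infty).
\]

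\emph{Step 2: the chain rule.} Set $a(h)=1+\frac{1}{4\pi^2h^2}$, so that $a'(h)=-\frac{1}{2\pi^2h^3}$. Then
\[
-\frac{d}{dh}\,2I(a(h))=\frac{1}{\pi^2h^3}\,I'(a(h))\sim h^{-3}\cdot h\cdot\log(1/h)=h^{-2}\log(1/h).
\]
Adding the negligible term $d'$ proves \eqref{eq:BS-derivative}. Strict monotonicity of $\mathrm{BS}$ for small $h$ follows immediately.

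\emph{Step 3: root separation.} Suppose $\mathrm{BS}(h)=n$ and $\mathrm{BS}(h')=n'$ with $n\neq n'$ in $\frac12\mathbb Z$. Then $|\mathrm{BS}(h)-\mathrm{BS}(h')|\ge\frac12$. By the mean value theorem there is a point $\tilde h$ between $h$ and $h'$ with
\[
\tfrac12\le|\mathrm{BS}'(\tilde h)|\,|h-h'|.
\]
Because $h\sim h'$, we also have $\tilde h\sim h$. Hence $|\mathrm{BS}'(\tilde h)|\lesssim h^{-2}\log(1/h)$, and this yields \eqref{eq:root-separation}.

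The main point requiring care is Step 1. We need a genuine two-sided logarithmic estimate for $I'(a)$, not just an upper bound, and this rests on the positivity of the integrand. We also need to confirm that the derivative bound on $d$ from \eqref{eq:def-d} is available, which traces back to the $\partial_h$ estimates on $r_{+,+}$ in Proposition~\ref{prop:quasimode}.
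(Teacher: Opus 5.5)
Your proposal is correct and follows the paper's proof essentially step for step. The paper uses the same three-range splitting of $I'(a)=\frac12\int_1^\infty \frac{dx}{\sqrt{(x^2-1)(x^2+a-1)}}$ to get $I'(a)\sim a^{-1/2}\log a$, the chain rule with $a'(h)=-(2\pi^2)^{-1}h^{-3}$ together with $d'(h)=O(\log(1/h))$, and the mean value theorem with $|n-n'|\ge\frac12$; your remarks on differentiating under the integral and on positivity giving the lower bound only make explicit what the paper leaves implicit.
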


\begin{proof}
For $a\geq4$, split
\begin{align*}
I'(a)=\frac12\int_1^\infty
\frac{dx}{\sqrt{x^2-1}\sqrt{x^2+a-1}}
\end{align*}
over $[1,2]$, $[2,\sqrt a]$, and $[\sqrt a,\infty)$. The first and third integrals are $O(a^{-1/2})$, while on $[2,\sqrt a]$ the integrand is $\sim(\sqrt a\,x)^{-1}$. Hence
\begin{align*}
I'(a)\sim a^{-1/2}\log a.
\end{align*}
For $a(h)=1+(4\pi^2h^2)^{-1}$, we have
\begin{align*}
\mathrm{BS}'(h)=2I'(a(h))a'(h)+d'(h).
\end{align*}
Since $a'(h)=-(2\pi^2)^{-1}h^{-3}$ and $d'(h)=O(\log(1/h))$, we conclude \eqref{eq:BS-derivative}. The separation estimate follows from the mean value theorem, the upper bound in \eqref{eq:BS-derivative}, and $|n-n'|\geq1/2$.
\end{proof}

\begin{lem}\label{lem:h-variation-quasimodes}
The model quasimodes can be chosen so that
\begin{align*}
u_{\s_1,\s_2}\in C^1((0,h_0);L^2(\mathbb R)),
\quad \s_1,\s_2\in\{\pm\}.
\end{align*}
Moreover, there are $C,M>0$ such that
\begin{align}\label{eq:model-quasimode-h-variation}
\|\partial_hu_{\s_1,\s_2}(h)\|_{L^2}\leq Ch^{-M},
\quad \s_1,\s_2\in\{\pm\}.
\end{align}
\end{lem}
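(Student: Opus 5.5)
It suffices to treat $u_{+,+}$. The other three model quasimodes are obtained from it by complex conjugation and by the reflection $x\mapsto -x$, and both operations commute with $\partial_h$ and preserve $L^2$ norms. The plan is to go back to the explicit construction in the proof of Proposition~\ref{prop:quasimode}. There $u_{+,+}=\chi_1 J(b)+\mathfrak c_{+,+}(h)\chi_2\,a\,e^{i\f/h}$, where $\chi_1,\chi_2$ are independent of $h$. We then differentiate each piece in $h$. The only freedom is the choice of the Borel resummations $a\sim\sum h^ja_j$ and $b\sim\sum h^jb_j$. We make these choices $C^1$ in $h$, with $h$-derivatives satisfying the same estimates as the symbols lose one factor of $h^{-1}$. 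For $a$, this is already built into Definition~\ref{def:symbolclasses}, since the class $\mathcal S_k^\ell$ controls $\partial_h^\beta$ with a loss of $h^{-\beta}$. It suffices to take $a=\sum_j h^j\psi(h/\epsilon_j)a_j$ with the standard choice of $\epsilon_j\to0$; the resulting $a$ lies in $\mathcal S_{-1/2}^0$, including the $\partial_h$ bound. For $b$ we do the same, using the analogous bounds $|\partial_h^\beta\partial_\x^\alpha b_j|\lesssim h^{-\beta}\langle\x\rangle^{-1-j-\alpha}$. These follow from the explicit formulas \eqref{eq:b_0}--\eqref{eq:b_j}: the $h$-dependence enters only through $c=2\pi h$ in $(\x^2-c^2)$ and in $F'$, on the region $\x\geq\x_0>2\pi h$, where everything is smooth in $c$.

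We then estimate the pieces. First, $\partial_h\mathfrak c_{+,+}$ is $O(h^{-3})$. Indeed, $\partial_h\bigl(2\pi I(1+(4\pi^2h^2)^{-1})\bigr)=O(h^{-2}\log(1/h))$ by the computation in Lemma~\ref{lem:BSmonotone}, and $\partial_h r_{+,+}=O(\log(1/h))$ by \eqref{eq:connformula}. Next,
\[
\partial_h\bigl(a e^{i\f/h}\bigr)=\bigl(\partial_ha+i a\,\partial_h(\f/h)\bigr)e^{i\f/h}.
\]
Here the key point is $\partial_h(\f/h)$. Writing $\f/h=\int_\infty^x(\sqrt{(h^{-2}+4\pi^2y^2)/(y^2-1)}-2\pi)\,dy+2\pi x$, we get
\[
\partial_h(\f/h)=-h^{-3}\int_\infty^x \frac{dy}{\sqrt{(y^2-1)(h^{-2}+4\pi^2y^2)}}.
\]
This is $O(h^{-2}\log(1/h))$ uniformly in $x\geq2$, since the integrand is $\lesssim h\,y^{-1}(1+hy)^{-1}$. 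Hence this term is bounded by $h^{-3}|a|$, and $a\in L^2(\{x\geq2\})$ with norm $\lesssim\log(1/h)^{1/2}$. The term $\partial_h a$ is bounded by $h^{-1}X^{-1/2}Y^{-1/2}$, which is also in $L^2$ with an $h^{-M}$ bound. Finally, for $J(b)$ on the compact set $[1/2,4]$,
\[
\partial_hJ(b)=J\bigl(\partial_hb-\tfrac ih(\partial_hF)b\bigr)-\tfrac{1}{2h}J(b).
\]
Here $\partial_hF(\x)=\int_\infty^\x \partial_h\sqrt{(\y^2+1)/(\y^2-c^2)}\,d\y$, which is a symbol of order $-2$ on $\x\geq\x_0$. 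So the new amplitude is $h^{-1}$ times an $S^{-1}$ symbol. We then use that $J$ maps $S^{-1}$ symbols, supported in $\x\ge\x_0$, into $L^2$ with norm $O(h^{-1/2})$ by Plancherel (the phase factor $e^{-iF/h}$ is unimodular), together with $\|\langle\x\rangle^{-1}\|_{L^2}<\infty$.

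Continuity of $h\mapsto\partial_hu_{+,+}$ in $L^2$ follows from the same formulas. Each integrand is continuous in $h$ and dominated uniformly for $h$ in compact subsets of $(0,h_0)$, so dominated convergence applies; in $x$ the domination is by $X^{-1/2}Y^{-1/2}$-type weights with $h$ bounded below, and similarly in $\x$. This gives $u_{\s_1,\s_2}\in C^1((0,h_0);L^2)$ and \eqref{eq:model-quasimode-h-variation} with, say, $M=4$.

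The main obstacle is making sure the Borel resummation preserves the $h$-derivative bounds. For the two-scale class $\mathcal S_{k}^{\ell}$, the logarithmic weight $L_h$ itself depends on $h$. One must check that $\partial_h L_h^\ell\lesssim h^{-1}L_h^{\ell}$ and that the cutoffs $\psi(h/\epsilon_j)$ interact correctly. I expect this to reduce to the elementary bound $|\partial_h\log(Y/(hX))|\lesssim h^{-1}$ and to the estimates already proved in Lemma~\ref{lem:log-symbol-aj}.
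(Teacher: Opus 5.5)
Your overall strategy matches the paper's. You differentiate the explicit glued expression $\chi_1J(b)+\mathfrak c_{+,+}\chi_2\,a\,e^{i\varphi/h}$ term by term, using that the cutoffs are $h$-independent, and you choose the Borel sums so that each $\partial_h$ costs one factor $h^{-1}$. You then control the WKB phase via $\partial_h(\varphi/h)=(h\partial_h\varphi-\varphi)/h^2=O(h^{-2}\log(1/h))$. Your computation of this is correct and is exactly the paper's bound $|h\partial_h\varphi-\varphi|\lesssim L_h$.

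There is, however, a genuine gap in the Lagrangian piece. Your formula $\partial_hJ(b)=J\bigl(\partial_hb-\tfrac ih(\partial_hF)b\bigr)-\tfrac1{2h}J(b)$ forgets that the phase is $\tfrac1h(x\xi-F(\xi))$, and the factor $1/h$ also depends on $h$. The correct formula contains the additional term
\[
-\frac{i}{h^2}\,\frac{1}{\sqrt{2\pi h}}\int e^{\frac ih(x\xi-F(\xi))}\bigl(x\xi-F(\xi)\bigr)b(\xi,h)\,d\xi .
\]
Your Plancherel argument does not cover this term. Write $F(\xi)=\xi+q(\xi)$ with $q\in S^{-1}$ uniformly in $h$. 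Then $x\xi-F(\xi)=(x-1)\xi-q(\xi)$, and the amplitude $\xi b$ is of order $0$, hence not in $L^2_\xi$. Its inverse Fourier transform has a $(x-1)^{-1}$-type singularity at $x=1$. That singularity is only cancelled by the prefactor $(x-1)$, and your argument never uses this.

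The missing step is the integration by parts in $\xi$ that the paper performs. With $\widetilde b=e^{-iq/h}b$ one has $J(b)(x)=(\mathcal F_h^{-1}\widetilde b)(x-1)$. Using $(x-1)e^{i(x-1)\xi/h}=-ih\,\partial_\xi e^{i(x-1)\xi/h}$ one obtains
\[
\partial_h\mathcal F_h^{-1}\widetilde b=\mathcal F_h^{-1}\Bigl(\partial_h\widetilde b+h^{-1}\partial_\xi(\xi\widetilde b)-\tfrac1{2h}\widetilde b\Bigr).
\]
Here $\partial_\xi(\xi\widetilde b)$ and $\partial_h\widetilde b=e^{-iq/h}\bigl(\partial_hb-\tfrac ih(\partial_hq)b+\tfrac{i}{h^2}qb\bigr)$ are $h^{-M}$ times $S^{-1}$ symbols, by the uniform symbol bounds on $q$ and $b$. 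Plancherel then gives the polynomial bound, with no $h^{-1/2}$ loss since $\mathcal F_h$ is unitary. The $C^1$ statement must likewise be proved from this corrected formula. With that repair, the rest of your argument goes through.

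Two minor points. First, $\partial_hF$ is $O(h)$ in $S^{-1}$, not of order $-2$; this is harmless. Second, your worry about $\partial_hL_h$ is unnecessary, since Definition~\ref{def:symbolclasses} differentiates only $a$, never the weight.
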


\begin{proof}
The transport coefficients and their $h$-derivatives satisfy the symbol estimates of Lemmas \ref{lem:transport} and \ref{lem:log-symbol-aj}; the Borel sums can be chosen with the same property. For the representation near $x=\s_1$, write $F_{\s_1,\s_2}(\x)=\s_1\x+q_{\s_1,\s_2}(\x)$ and $\widetilde b=e^{-iq_{\s_1,\s_2}/h}b_{\s_1,\s_2}$. If
\begin{align*}
\mathcal F_h^{-1}a(y)=(2\pi h)^{-1/2}\int e^{iy\x/h}a(\x,h)\,d\x,
\end{align*}
then integration by parts gives
\begin{align*}
\partial_h\mathcal F_h^{-1}a
=\mathcal F_h^{-1}\left(\partial_ha+h^{-1}\partial_\x(\x a)-\frac{1}{2h}a\right).
\end{align*}
By Plancherel, the symbol estimates for $q_{\s_1,\s_2}$ and $b_{\s_1,\s_2}$ therefore imply a polynomial $L^2$ bound.

On the WKB region, direct differentiation gives
\begin{align*}
\partial_h\bigl(\mathfrak c_{\s_1,\s_2}ae^{i\f_{\s_1,\s_2}/h}\bigr)
=e^{i\f_{\s_1,\s_2}/h}\left(\mathfrak c'_{\s_1,\s_2}a
+\mathfrak c_{\s_1,\s_2}\partial_ha
+i\mathfrak c_{\s_1,\s_2}a\frac{h\partial_h\f_{\s_1,\s_2}-\f_{\s_1,\s_2}}{h^2}\right).
\end{align*}
By the formula for $\mathfrak c_{\s_1,\s_2}$,  $|\partial_h\mathfrak c_{\s_1,\s_2}|\leq Ch^{-M_1}$ for some $M_1$, while the  formula for  $\f_{\s_1,\s_2}$ implies
\begin{align*}
|h\partial_h\f_{\s_1,\s_2}(x)-\f_{\s_1,\s_2}(x)|\lesssim L_h(x),
\quad \s_1x\geq2,
\end{align*}
and the estimates in Definition \ref{def:symbolclasses} then imply a polynomial $L^2$ bound for all three terms. The gluing cutoffs are independent of $h$, which proves \eqref{eq:model-quasimode-h-variation}.
\end{proof}

Since $\mathrm{BS}(h)\to\infty$ as $h\to0$ and is strictly decreasing for small $h$, for every sufficiently large $n\in\frac12\mathbb Z$ there is a unique $h_n>0$ such that $\mathrm{BS}(h_n)=n$ by Lemma \ref{lem:BSmonotone}.

\begin{prop}[Uniqueness in a spectral window]\label{prop:rank-one-windows}
There is $K_0>0$ such that, for every fixed $K\geq K_0$ and all sufficiently large $n$, the interval
\begin{align}\label{eq:spectral-window-rank}
(h_n^{-2}-h_n^K,h_n^{-2}+h_n^K)
\end{align}
contains at most one eigenvalue of $P_{\rm sa}$, counted with multiplicity.
\end{prop}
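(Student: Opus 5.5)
We argue by contradiction: suppose the window \eqref{eq:spectral-window-rank} contains two orthonormal eigenfunctions $u_1,u_2$ of $P_{\rm sa}$ with eigenvalues $E_1,E_2$ (possibly equal). The plan is to show that each $u_j$ is, up to $O(h^\infty)$ in $L^2$, a multiple of one fixed function built from the model quasimodes at the single parameter $h_n$. Two such orthonormal vectors cannot exist, which is the contradiction.

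\emph{Step 1: decompose each eigenfunction at its own parameter.} Write $E_j=h_j^{-2}$, so that $|h_j-h_n|\lesssim h_n^{K+3}$. Apply Proposition~\ref{prop:asymexpef} with $h=h_j$ to get coefficients $c^{(j)}_{\s_1,\s_2}(h_j)$, with $u_j-\sum c^{(j)}_{\s_1,\s_2}u_{\s_1,\s_2}(h_j)=O_{L^2}(h_j^\infty)$; the remainder is $O_{L^2}(h^\infty)$ because it is $O_{H_h^{1/2-\delta}}(h^\infty)$. Proposition~\ref{prop:approximate-boundary-system}, with the argument in its proof, then gives two facts. First, $\mathrm{BS}(h_j)$ is $O(h^\infty)$-close to some half-integer. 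By Lemma~\ref{lem:BSmonotone}, $\mathrm{BS}(h_j)-\mathrm{BS}(h_n)=O(h^{K+1}\log(1/h))$, so for $K\geq K_0$ that half-integer is $n$ itself. Second, the coefficient vector lies within $O(h^\infty)$ of the line $\mathbb C(1,-1,\mp1,\pm1)$ in \eqref{eq:semiBS}, with the sign fixed by whether $n$ is an integer or a half-integer. To see this, note that $\mathfrak c_{+,+}\pm\overline{\mathfrak c_{+,+}}$ is of size $\sim1$ for the nonvanishing combination, so the corresponding linear relation $c_{+,+}=\mp c_{-,+}+O(h^\infty)$ holds. Hence $u_j=\lambda_j w(h_j)+O_{L^2}(h^\infty)$, where
$w(h)=u_{+,+}-u_{+,-}\mp u_{-,+}\pm u_{-,-}$ and $|\lambda_j|\sim(\log(1/h))^{-1/2}$.

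\emph{Step 2: move to the common parameter.} By Lemma~\ref{lem:h-variation-quasimodes},
\[
\|w(h_j)-w(h_n)\|_{L^2}\leq C h^{-M}|h_j-h_n|\lesssim h^{K+3-M}.
\]
Choose $K_0$ larger than $M$ plus a margin, so that this error is $o((\log(1/h))^{-1/2})$ and in fact a positive power of $h$. Then $u_j=\lambda_jw(h_n)+O(h^{\varepsilon})$ for $j=1,2$.

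\emph{Step 3: conclude.} We have $\|w(h_n)\|^2\sim\log(1/h)$ by Proposition~\ref{prop:quasimode}(ii) and Lemma~\ref{lem:quasiortho}. Set $e=w(h_n)/\|w(h_n)\|$; then $u_j=\mu_j e+o(1)$ with $|\mu_j|\to1$. This forces $|\langle u_1,u_2\rangle|\to1$, contradicting orthogonality. The same argument covers multiplicity at a single eigenvalue.

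\emph{Main obstacle.} The delicate point is the uniformity in Step 1. Proposition~\ref{prop:asymexpef} and the boundary analysis must be applied at the varying parameters $h_j$ with constants uniform in $h$; this is implicit in the statements. One must also check that the non-degenerate factor, $\mathfrak c+\overline{\mathfrak c}$ or $\mathfrak c-\overline{\mathfrak c}$, is bounded below near $h_n$, so that the $O(h^\infty)$ errors transfer to the coefficient line. The choice of $K_0$ relative to $M$ in Lemma~\ref{lem:h-variation-quasimodes} is the only place the threshold enters.
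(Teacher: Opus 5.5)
Your proposal is correct and follows essentially the paper's argument: you decompose each eigenfunction at its own parameter $E^{-1/2}$ via Propositions~\ref{prop:asymexpef} and \ref{prop:approximate-boundary-system}, and use Lemma~\ref{lem:BSmonotone} to pin the half-integer to $n$ and the coefficient vector to the line $\mathbb C(1,-1,\mp1,\pm1)$. You then transport to $h_n$ with Lemma~\ref{lem:h-variation-quasimodes} (taking $K_0>M$) and conclude that two orthonormal eigenfunctions would both lie within $o(1)$ of one fixed line. The only cosmetic difference is that you normalize the distinguished combination of model quasimodes only at $h_n$, whereas the paper normalizes it for every $h$ and bounds $\partial_h e_j$.
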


\begin{proof}
We put
\begin{align*}
q_0(h)&=u_{+,+}-u_{+,-}-u_{-,+}+u_{-,-},\\
q_1(h)&=u_{+,+}-u_{+,-}+u_{-,+}-u_{-,-},
\end{align*}
and let $e_j(h)=q_j(h)/\|q_j(h)\|_{L^2}$. From Proposition \ref{prop:quasimode}$(ii)$ and Lemma \ref{lem:quasiortho} we have $\|q_j(h)\|_{L^2}^2\sim\log(1/h)$. By applying Lemma \ref{lem:h-variation-quasimodes} and differentiating  the normalization factor we thus obtain, for some $C,M>0$,
\begin{align}\label{eq:distinguished-quasimode-variation}
\|\partial_he_j(h)\|_{L^2}\leq Ch^{-M}.
\end{align}

Let $E$ be an eigenvalue in \eqref{eq:spectral-window-rank}, let $u$ be a normalized eigenfunction, and set $\widetilde h=E^{-1/2}$. Then $|\widetilde h-h_n|=O(h_n^{K+3})$. By Proposition \ref{prop:approximate-boundary-system} we have
\begin{align*}
\mathrm{BS}(\widetilde h)=m+O(h_n^\infty)
\end{align*}
for some $m\in\frac12\mathbb Z$, while \eqref{eq:BS-derivative} gives
\begin{align*}
{\mathrm{BS}(\widetilde h)-n}=\mathrm{BS}(\widetilde h)-\mathrm{BS}(h_n)=O(h_n^{K+1}\log(1/h_n)).
\end{align*}
Thus $m=n$ when $K$ is large. Using the boundary equations we get that the coefficient vector of $u$ is, modulo $O(h_n^\infty)$, a multiple of $(1,-1,-1,1)$ if $n\in\mathbb Z$, and of $(1,-1,1,-1)$ if $n\in\mathbb Z+\frac12$. Consequently, for $j=0$ in the first case and $j=1$ in the second,
\begin{align*}
\|u-\beta e_j(\widetilde h)\|_{L^2}=O(h_n^\infty),\quad
|\beta|=1+O(h_n^\infty)
\end{align*}
for some $\beta\in\mathbb C$. By \eqref{eq:distinguished-quasimode-variation},
\begin{align*}
\|e_j(\widetilde h)-e_j(h_n)\|_{L^2}=O(h_n^{K+3-M}).
\end{align*}
We choose $K_0>M$. Every normalized eigenfunction whose eigenvalue lies in \eqref{eq:spectral-window-rank} has then $o(1)$ distance  from the same one-dimensional subspace $\mathbb Ce_j(h_n)$. Hence that interval cannot contain two orthogonal normalized eigenfunctions, which proves the proposition.
\end{proof}

\begin{proof}[Proof of Theorem \ref{thm:BSsemiclassical}]
For each $n\gg1$, Theorem \ref{thm:evnearBS} ensures the existence of an eigenvalue in
\begin{align*}
(h_n^{-2}-h_n^K,h_n^{-2}+h_n^K)
\end{align*}
for every $K>0$. By Proposition \ref{prop:rank-one-windows} we know that for $K\geq K_0$  this eigenvalue is unique and simple. Since $K$ can be taken arbitrarily large, 
\begin{align*}
E_n=h_n^{-2}+O(h_n^\infty),
\end{align*}
and hence \eqref{eq:approximate-BS}.

Conversely, let $E=h^{-2}$ be a sufficiently large eigenvalue. Proposition \ref{prop:approximate-boundary-system} implies that $\mathrm{BS}(h)=n+O(h^\infty)$ for a unique $n\in\frac12\mathbb Z$. By \eqref{eq:BS-derivative}, $h-h_n=O(h^\infty)$. Fixing $K\geq K_0$, we obtain
\begin{align*}
E\in(h_n^{-2}-h_n^K,h_n^{-2}+h_n^K).
\end{align*}
By Proposition \ref{prop:rank-one-windows}, this interval contains exactly the eigenvalue $E_n$, counted with multiplicity. Hence $E=E_n$, so every sufficiently large positive eigenvalue occurs in the sequence constructed above.
\end{proof}

\subsection{Proof of main result and logarithmic Weyl law}\label{ss:finalproof}
We have now all the ingredients needed to conclude the main result, Theorem \ref{thm:BSthm}, and Corollary \ref{cor:Weyllaw}.

\begin{proof}[Proof of Theorem \ref{thm:BSthm}]
We set $R(E)=d(E^{-1/2})$. By Lemma \ref{lem:first-log-correction-d} we have the asymptotics
\begin{align*}
R(E)=-\frac{1}{16\pi}E^{-1/2}\log E+O(E^{-1/2}),
\end{align*}
and the derivative bounds in \eqref{eq:def-d} imply
\begin{align*}
R'(E)=-\frac12E^{-3/2}d'(E^{-1/2})=O(E^{-3/2}\log E).
\end{align*}
 The asserted numbering of the eigenvalues and the quantization rule \eqref{BS} follow now from Theorem \ref{thm:BSsemiclassical}.
\end{proof}

\begin{proof}[Proof of Corollary \ref{cor:Weyllaw}]
We recall from \cite[(25)]{CM} that
\begin{align*}
I(a)=\frac12\sqrt a(\log a-2+4\log2)+1+o(1),\quad a\to\infty.
\end{align*}
The left hand side of \eqref{BS}, with $E_n$ replaced by a real variable $E$, is strictly increasing for $E$ large by Lemma \ref{lem:BSmonotone}. The finitely many positive eigenvalues below the range of Theorem \ref{thm:BSsemiclassical} are absorbed into the $O(1)$ remainder. Counting the half-integers in \eqref{BS} therefore gives
\begin{align*}
N_+(E)
&=2\biggl(2I\biggl(1+\frac{E}{4\pi^2}\biggr)+\frac14+R(E)\biggr)+O(1)\\
&=2\sqrt{1+\frac{E}{4\pi^2}}
\biggl(\log\biggl(1+\frac{E}{4\pi^2}\biggr)-2+4\log2\biggr)+O(1)\\
&=4\frac{\sqrt E}{2\pi}
\biggl(\log\frac{\sqrt E}{2\pi}-1+2\log2\biggr)+O(1).
\end{align*}
Here we used $R(E)=o(1)$ and, with $t=E/(4\pi^2)$,
\begin{align*}
\sqrt{t+1}-\sqrt t=o(1),\quad
\sqrt{t+1}\log(t+1)-\sqrt t\log t=o(1).
\end{align*}
\end{proof}

\appendix

\section{\texorpdfstring{Self-adjoint extensions of $P$}{Self-adjoint extensions of P}} \label{app:saP}

\subsection{Generalities on self-adjoint extensions} 
We  briefly recall the von Neumann theory of self-adjoint extensions of unbounded operators and then reformulate it in the language of boundary forms used in \cite[Sec.~6]{Katsnelson} and \cite[Eq.~(6)]{CM}. This will be useful for us in \S\ref{ss:ap} where we review the results in \cite{CM} on self-adjoint  extensions of $P$.

\medskip

    Let $T:D(T)\to H$ be a densely defined closed symmetric operator on a Hilbert space $H$. Let us recall that   the {deficiency subspaces} are by definition
    \[ K_\pm := \ker(T^* \mp i) = \operatorname{ran}(T \pm i)^\perp \]
    and the {deficiency indices} are $n_\pm := \dim K_\pm$.

    If $T$ is a closed symmetric operator, then it is well-known that $T$ admits a self-adjoint extension if and only if $n_+ = n_-$.
    Moreover, $T$ is self-adjoint if and only if $n_+ = n_- = 0$.

\begin{thm} (see e.g.~\cite[Thm.~13.10]{Schmudgen})\\
    Let $T$ be a closed symmetric operator with deficiency indices $n_\pm = n$. There is a bijection between
    \begin{itemize}
        \item[(i)] self-adjoint extensions $T_U$ of $T$, and
        \item[(ii)] unitary operators $U: K_+ \to K_-$,
    \end{itemize}
    given by
    \[
      D(T_U) = D(T)+(I+U)K_+,
    \]
    where one sets $T_U = T^*|_{D(T_U)}$.
\end{thm}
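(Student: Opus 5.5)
The proof goes through the first von Neumann formula and the boundary form on $D(T^*)$. Under this form, extensions $T\subset S\subset T^*$ correspond to subspaces of $K_+\oplus K_-$, and self-adjointness becomes a maximality condition on that subspace.

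\emph{Step 1: decomposition of $D(T^*)$.} Equip $D(T^*)$ with the graph inner product $\langle u,v\rangle_{T^*}=\langle u,v\rangle+\langle T^*u,T^*v\rangle$. Since $T$ is closed, $D(T)$ is a closed subspace. I will show
\[
D(T^*)=D(T)\dotplus K_+\dotplus K_-,
\]
with the three summands mutually orthogonal for $\langle\cdot,\cdot\rangle_{T^*}$.
\begin{itemize}
\item For orthogonality, take $f\in D(T)$ and $x\in K_\pm$. Then $\langle f,x\rangle_{T^*}=\langle f,x\rangle+\langle Tf,\pm ix\rangle=\pm i\langle (T\pm i)f,x\rangle=0$, because $K_\pm=\operatorname{ran}(T\pm i)^\perp$. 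A similar computation gives $K_+\perp K_-$.
\item For completeness, let $w\in D(T^*)$ be graph-orthogonal to all three summands. Orthogonality to $D(T)$ gives $(T^*)^2w=-w$, so $(T^*+i)w\in K_+$. Orthogonality to $K_+$ then forces $(T^*+i)w=0$, so $w\in K_-$, and orthogonality to $K_-$ gives $w=0$.
\end{itemize}

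\emph{Step 2: the boundary form.} Set $\omega(u,v)=\langle T^*u,v\rangle-\langle u,T^*v\rangle$. It vanishes whenever one argument lies in $D(T)$. Writing $u=f+x_++x_-$ and $v=g+y_++y_-$ according to Step 1, a direct computation gives
\[
\omega(u,v)=2i\bigl(\langle x_+,y_+\rangle-\langle x_-,y_-\rangle\bigr).
\]
An extension $T\subset S\subset T^*$ is determined by the subspace $D(S)/D(T)\subset K_+\oplus K_-$, and $S$ is symmetric exactly when $\omega$ vanishes on $D(S)$.

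\emph{Step 3: symmetric extensions are graphs of isometries.} Suppose $\omega$ vanishes on $D(S)$. Then for $u\in D(S)$ we get $\|x_+\|=\|x_-\|$, so $x_+=0$ implies $x_-=0$. Hence the subspace is the graph of a linear map $V$, defined on a subspace of $K_+$ and taking values in $K_-$. Taking $u=v$ shows $V$ is isometric. Conversely, for any isometry $V$ the space $D(T)+(I+V)D(V)$ is $\omega$-isotropic. When $V=U$ is unitary, this is exactly the domain $D(T_U)$ in the statement, so $T_U$ is symmetric.

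\emph{Step 4: self-adjointness (main step).} Since $T\subset S$ implies $S^*\subset T^*$, we have
\[
D(S^*)=\{v\in D(T^*)\mid \omega(u,v)=0\ \text{for all } u\in D(S)\}.
\]
Take $S=T_U$ with $U$ unitary, and $v=g+y_++y_-\in D(S^*)$. The condition $\omega(x+Ux,v)=0$ for all $x\in K_+$ reads $\langle x,y_+\rangle=\langle Ux,y_-\rangle=\langle x,U^*y_-\rangle$ for all $x\in K_+$. This gives $y_+=U^*y_-$, i.e.\ $y_-=Uy_+$, so $v\in D(T_U)$ and $T_U$ is self-adjoint.

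For the converse, let $S$ be self-adjoint with associated isometry $V$. If $D(V)\neq K_+$, pick $0\neq y_+\perp D(V)$; then $y_+\in D(S^*)\setminus D(S)$ by the formula above. If $\operatorname{ran}V\neq K_-$, pick $0\neq y_-\perp\operatorname{ran}V$ and argue the same way. Either case contradicts $S=S^*$, so $V$ is unitary.

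Injectivity of $U\mapsto T_U$ follows because the graph of $U$ is recovered from $D(T_U)$ through the direct sum of Step 1, which completes the bijection. The delicate point is the computation of $D(S^*)$ via $\omega$ in Step 4; the rest is bookkeeping.
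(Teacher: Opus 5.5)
The paper does not prove this statement; it quotes it from Schm\"udgen. The closest argument in the paper is the sketch of Theorem~\ref{thm:lagrangian-selfadjoint-extensions}. Its key identity $T_{\mathcal L}^*=T_{\mathcal L^{\perp_\Omega}}$ is exactly your description of $D(S^*)$ in Step~4. Your route is the standard concrete version of that identity: von Neumann's first formula, then the explicit form $\omega(u,v)=2i(\langle x_+,y_+\rangle-\langle x_-,y_-\rangle)$ on $\mathcal E\cong K_+\oplus K_-$. Your Steps~3--4 essentially show that the $\Omega$-self-orthogonal subspaces are exactly the graphs of unitaries $K_+\to K_-$, which links the two formulations. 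The paper prefers the intrinsic version because it reads off $\Omega$ directly from Wronskians, as in \eqref{eq:weighted_wronskian}, without computing deficiency spaces. Your version gives the explicit parametrization by $U$. The overall structure of your argument is correct.

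One step is missing a justification. In the converse part of Step~4, whenever $D(V)\neq K_+$ you pick $0\neq y_+\perp D(V)$, and similarly $y_-\perp\operatorname{ran}V$. This fails if $D(V)$ is a dense proper subspace of $K_+$, because then $D(V)^\perp=0$. So you must first show that $D(V)$ and $\operatorname{ran}V$ are closed. This follows from self-adjointness:
\begin{itemize}
\item $S=S^*$ is closed, so $D(S)$ is closed for the graph norm of $T^*$.
\item $K_+$ and $K_-$ are graph-orthogonal by Step~1, and on $K_\pm$ the graph norm equals $\sqrt2\|\cdot\|$. Hence $D(S)\cap(K_+ + K_-)$, which is the graph of $V$, is closed in $K_+\times K_-$.
\item An isometry with closed graph has closed domain and closed range.
\end{itemize}
In Step~1 you should likewise note that $D(T)+K_+ + K_-$ is graph-norm closed, as an orthogonal sum of closed subspaces, before concluding equality from the triviality of its orthogonal complement. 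With these additions the proof is complete.
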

We will now reformulate this standard result in terms of Lagrangian or self-orthogonal subspaces of an intrinsic quotient space $\mathcal E$, referred to as the boundary space by Katsnelson in \cite{Katsnelson}.

\begin{defn}\label{def:boundary-form}
Let
\[
    \widetilde\Omega(u,v)
    :=\frac{1}{i}\big(
        \langle T^*u,v\rangle
        -\langle u,T^*v\rangle
    \big),
    \quad u,v\in D(T^*).
\]
Since $\widetilde\Omega$ vanishes whenever either argument belongs to
$D(T)$, it descends to a Hermitian sesquilinear form on the boundary space \(\mathcal E := D(T^*)/D(T)\): 
\[
    \Omega([u],[v])
    :=\widetilde\Omega(u,v).
\]
We call $\Omega$ the boundary form.
\end{defn}

\begin{rem}
Since $T$ is closed, $T^{**}=T$. Consequently, the boundary form
$\Omega$ is nondegenerate on $\mathcal E$.
\end{rem}

\begin{thm}\label{thm:lagrangian-selfadjoint-extensions}
  There is a one-to-one correspondence between the self-adjoint extensions $T_U$ of $T$ and the self-orthogonal subspaces $\mathcal L_\mathcal E \subset \mathcal E$. Specifically, $D(T_U)$ is the pre-image of $\mathcal L_\mathcal E$ under the canonical quotient map $\pi : D(T^*) \to \mathcal E$.
\end{thm}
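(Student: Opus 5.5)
The plan is to reduce Theorem~\ref{thm:lagrangian-selfadjoint-extensions} to the von Neumann parametrization (Thm.~13.10 of \cite{Schmudgen} recalled above). The identification is through von Neumann's decomposition
\[
D(T^*)=D(T)\dotplus K_+\dotplus K_-,
\]
so that $\mathcal E\cong K_+\oplus K_-$ as vector spaces via $\pi$. First I will compute the boundary form in these coordinates. For $u=u_0+k_++k_-$ and $v=v_0+l_++l_-$ we have $T^*k_\pm=\pm ik_\pm$, and the $D(T)$ components drop out by symmetry. A direct computation then gives
\[
\Omega([u],[v])=2\bigl(\langle k_+,l_+\rangle-\langle k_-,l_-\rangle\bigr).
\]
So $\Omega$ is the indefinite form of signature $(n_+,n_-)$ on $K_+\oplus K_-$. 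This also recovers the nondegeneracy remark.

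Next I will prove the two inclusions. Let $S$ be a self-adjoint extension. Since $T\subset S$, we get $S=S^*\subset T^*$, so $D(T)\subset D(S)\subset D(T^*)$ and $S=T^*|_{D(S)}$. Set $\mathcal L=\pi(D(S))$; since $D(S)\supset D(T)$, the preimage of $\mathcal L$ is exactly $D(S)$.
\begin{itemize}
\item Symmetry of $S$ gives $\Omega|_{\mathcal L}=0$, i.e.\ $\mathcal L\subset\mathcal L^{\perp_\Omega}$.
\item Conversely, if $[v]\in\mathcal L^{\perp_\Omega}$, then $\langle T^*u,v\rangle=\langle u,T^*v\rangle$ for all $u\in D(S)$. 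This says $v\in D(S^*)=D(S)$, so $\mathcal L^{\perp_\Omega}\subset\mathcal L$.
\end{itemize}
Hence $\mathcal L$ is self-orthogonal.

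For the converse, let $\mathcal L=\mathcal L^{\perp_\Omega}$ and put $D=\pi^{-1}(\mathcal L)$, $S=T^*|_D$. Then $S$ is symmetric because $\Omega$ vanishes on $\mathcal L$. To compute $D(S^*)$, note that $S\supset T$ gives $S^*\subset T^*$, so $D(S^*)=\{v\in D(T^*):\widetilde\Omega(u,v)=0\ \forall u\in D\}$. This set equals $\pi^{-1}(\mathcal L^{\perp_\Omega})=D$. Thus $S$ is self-adjoint, and the two maps $S\mapsto\pi(D(S))$ and $\mathcal L\mapsto T^*|_{\pi^{-1}(\mathcal L)}$ are mutually inverse.

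The main subtlety is that $\mathcal E$ may be infinite-dimensional, so self-orthogonality must be read as $\mathcal L=\mathcal L^{\perp_\Omega}$ and not merely as maximal isotropy. With that definition the argument above needs no closedness of $\mathcal L$ and no dimension count. It is purely algebraic, because $D(S^*)$ is characterized exactly by $\Omega$-orthogonality.

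To connect with the unitary parametrization, I would also check a direct description of these subspaces. Using the formula for $\Omega$, a subspace with $\mathcal L=\mathcal L^{\perp_\Omega}$ meets $K_+$ and $K_-$ trivially. It is the graph of an isometry $U:K_+\to K_-$, and self-orthogonality forces $U$ to be defined on all of $K_+$ and onto $K_-$. I would organize this step through the graph-of-isometry description, and this is where I expect the most care to be needed.
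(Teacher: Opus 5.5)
Your argument is correct and is essentially the paper's. In both, the key step is that $S\supset T$ forces $S^*\subset T^*$, so $D\bigl((T^*|_{\pi^{-1}(\mathcal L)})^*\bigr)=\pi^{-1}(\mathcal L^{\perp_\Omega})$. The paper packages this as the single identity $T_{\mathcal L}^*=T_{\mathcal L^{\perp_\Omega}}$, while you split it into two inclusions. The von Neumann coordinates, the signature formula for $\Omega$, and the graph-of-unitary description check out and match the labeling $T_U$, but they are not needed for the statement. Your remark that self-orthogonality must mean $\mathcal L=\mathcal L^{\perp_\Omega}$, not just maximal isotropy, is apt.
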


\begin{proof}[Sketch of proof]
For a subspace $\mathcal L\subset\mathcal E$, set
\[
    T_{\mathcal L}
    :=T^*|_{\pi^{-1}(\mathcal L)}.
\]
Then $T\subset T_{\mathcal L}\subset T^*$. We claim that
\(
    T_{\mathcal L}^*
    =T_{\mathcal L^{\perp_\Omega}}.
\)
Indeed, $y\in D(T_{\mathcal L}^*)$ if and only if
\(
    \langle T^*x,y\rangle
    =\langle x,T^*y\rangle
\) for all $ x\in\pi^{-1}(\mathcal L)$,
where the inclusion $D(T)\subset D(T_{\mathcal L})$ first implies that
$y\in D(T^*)$. By definition of the boundary form, this is equivalent to
\(
    \Omega([x],[y])=0
\) for all  $[x]\in\mathcal L$ or equivalently $[y]\in\mathcal L^{\perp_\Omega}$. Hence
$T_{\mathcal L}^*=T_{\mathcal L^{\perp_\Omega}}$. It follows that $T_{\mathcal L}$ is self-adjoint if and only if
$\mathcal L=\mathcal L^{\perp_\Omega}$. Conversely, every self-adjoint
extension $\widetilde T$ of $T$ satisfies
\[
    T\subset\widetilde T=\widetilde T^*\subset T^*,
\]
and is therefore obtained by taking
$\mathcal L=\pi(D(\widetilde T))$.
\end{proof}

\subsection{\texorpdfstring{Application to $P$}{Application to P}} \label{ss:ap}
For the reader's convenience, we recall the arguments made by Connes and Moscovici in  
\cite{CM}. Let $P_{\min}$ be the closure of $P|_{\mathcal S(\mathbb R)}$ and
$P_{\max}=P_{\min}^*$. Its domain is
\[
    D_{\max}(P)=\{u\in L^2(\mathbb R) \mid Pu\in L^2(\mathbb R)\},
\]
where $Pu$ is understood distributionally. We write $g(x)=x^2-1$ and introduce the
weighted Wronskian
\[
    W_g(u,v)(x)
    :=-g(x)\big(u(x)v'(x)-v(x)u'(x)\big)
\]
(note that in the main part of the paper we use instead the semiclassical weighted Wronskian: $W_g = -h^{-1}W_h$).
For $u\in D(P_{\max})$, the identity
\[
    (gu')'=-Pu-4\pi^2x^2u\in L^2_{\mathrm{loc}}
\]
implies that $gu'\in H^1_{\mathrm{loc}}$ and hence $gu'$ has a continuous representative, also across $x=\pm1$.

By \eqref{eq:max-domain-boundary-form}, 
$u(x)-b_\sigma^{(0)}\log|x-\sigma|$ has finite one-sided
limits at $\sigma=\pm1$: the remaining terms are a step
function and an $H^1_{\mathrm{loc}}$ function.
In particular, $g(x)u(x)\to0$ as $x\to\sigma$.\\
Recall from Definition~\ref{def:boundary-form} the definition of $\widetilde\Omega$. Integration by parts gives
\begin{equation}\label{eq:weighted_wronskian}
    i\widetilde\Omega(u,v)
    =
    -W_g(u,\overline v)\big|_{-\infty}^{-1}
    -W_g(u,\overline v)\big|_{-1}^{1}
    -W_g(u,\overline v)\big|_{1}^{\infty},
    \quad u,v\in D(P_{\max}),
\end{equation}
where the limits at $\pm1$ are taken from \textit{within} each interval (cf.~\cite[Eqs.~(7)--(10)]{CM}, noting that the operator considered there is $-P$).
Passing to the quotient, we obtain the
boundary form $\Omega$ on
$\mathcal E=D(P_{\max})/D(P_{\min})$.

\begin{lem}\cite[Lem.~1.1]{CM}
    The deficiency indices of $P_{\min}$ are $(4,4)$.
\end{lem}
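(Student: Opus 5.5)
The plan is to compute the deficiency indices interval by interval, using the decomposition of $\mathbb R\setminus\{\pm1\}$ into the three Sturm--Liouville intervals $(-\infty,-1)$, $(-1,1)$ and $(1,\infty)$. Since the leading coefficient vanishes at $\pm1$, $P_{\min}$ decouples, modulo finite-dimensional considerations, into a direct sum of minimal operators on these intervals. I will make this precise by showing that the boundary space $\mathcal E=D(P_{\max})/D(P_{\min})$ is the direct sum of the boundary spaces at the six endpoints $-\infty,-1^-,-1^+,1^-,1^+,+\infty$. By \eqref{eq:weighted_wronskian}, the boundary form $\Omega$ is then the sum of the corresponding endpoint Wronskian terms.

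\textbf{Counting boundary data at each endpoint.} For a two-term Sturm--Liouville endpoint, the local contribution to $\dim\mathcal E$ is $2$ if the endpoint is limit-circle and $0$ if it is limit-point (Weyl alternative). So it suffices to check that all six endpoints are limit-circle, since then the deficiency indices are $n_\pm=\tfrac12\dim\mathcal E=\tfrac12\cdot 12=6$.

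That cannot be right, since the lemma claims $(4,4)$. The resolution is that $x=\pm1$ is an interior singular point of a single operator on $\mathbb R$. Distributional solutions of $Pu\in L^2$ across $x=\sigma$ must have $gu'$ continuous across $\sigma$, as noted after \eqref{eq:max-domain-boundary-form}. By \eqref{eq:max-domain-boundary-form}, $gu'(\sigma)$ equals $a_\sigma^{(0)}$ up to the nonzero constant $g'(\sigma)=2\sigma$. Hence the logarithmic coefficient is the same on both sides of $\sigma$. Precisely, one checks that $\log|x-\sigma|$ with one common coefficient, and not a one-sided $\log$, is the only admissible distributional behaviour: a one-sided $\log_+$ term would produce a $\delta$ in $(gu')'$.

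The local boundary data at $x=\sigma$ are therefore the pair $(a^{(0)}_\sigma,a^{(1)}_\sigma)$ from \eqref{eq:max-domain-boundary-form}, and each singular point contributes $2$ dimensions. At $\pm\infty$, both solutions $|x|^{-1}e^{\pm2\pi ix}$ are $L^2$, so these ends are limit-circle and each contributes $2$ dimensions. The total is $\dim\mathcal E=2+2+2+2=8$.

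\textbf{Making it rigorous.} I will use cutoffs $\chi_j$ localized near each of the four singular ends, together with explicit representatives: $\chi\log|x-\sigma|$, $\chi H(x-\sigma)$, and $\chi\,x^{-1}e^{\pm2\pi ix}$ (or exact solutions at $\pm\infty$). I will show these eight elements lie in $D(P_{\max})$ and span $\mathcal E$. The spanning argument subtracts them from a given $u\in D(P_{\max})$ so that the Wronskian against every representative vanishes at every end; by \eqref{eq:weighted_wronskian} this places the difference in $D(P_{\max})^{\perp_\Omega}$, which equals $D(P_{\min})$ since $P_{\min}^{**}=P_{\min}$. For linear independence, I will evaluate the $8\times8$ matrix of $\Omega$ on the representatives and check that it is nondegenerate. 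For example, $W_g(\log|x-\sigma|,H(x-\sigma))$ has a nonzero one-sided jump, and $W_g(x^{-1}e^{2\pi ix},x^{-1}e^{-2\pi ix})\to$ a nonzero constant.

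Since $P$ is real, conjugation maps $K_+$ to $K_-$, so $n_+=n_-$. Combined with $\dim\mathcal E=n_++n_-$ (von Neumann's formula $D(T^*)=D(T)\dotplus K_+\dotplus K_-$), this gives $(4,4)$.

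\textbf{Main obstacle.} The delicate point is showing that an element of $D(P_{\max})$ is \emph{exactly} captured by these eight data, with no further singular behaviour across $\pm1$. Concretely, one must show that $u-a^{(0)}\log|x-\sigma|-a^{(1)}H(x-\sigma)$ is locally in $D(P_{\min})$ near $\sigma$. I would try to handle this with the Hardy inequality argument already behind \eqref{eq:max-domain-boundary-form}: show that the $H^1$ remainder with vanishing $gu'(\sigma)$ and no jump is approximable in graph norm by test functions, by cutting off at scale $\epsilon$ around $\sigma$ and controlling the commutator terms $g\chi_\epsilon'u'$ by Hardy.
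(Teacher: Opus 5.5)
Your strategy differs from the paper's. The paper never has to identify $D(P_{\min})$. It counts $\ker(P_{\max}-z)$ directly: two local parameters on each of the three intervals, since all six ends are limit-circle, minus the two matching conditions for the logarithmic coefficients at $\pm1$. The matched piecewise solutions then solve the equation on $\mathbb R$, because $gu'$ is continuous and $g\delta_{\pm1}=0$.

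You instead compute $\dim\mathcal E=8$ and then use $\dim\mathcal E=n_++n_-$ with $n_+=n_-$. That route is legitimate. Your independence computation is correct: the $\Omega$-matrix on disjointly supported representatives is block-diagonal and nondegenerate. It is essentially the paper's subsequent basis lemma, which there relies on $\dim\mathcal E=8$ from this lemma. The price of your route is the spanning step, and as written that step does not go through.

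\textbf{The spanning step is circular.} You subtract a combination of the eight representatives so that all pairings $\Omega([w],[\gamma_k])$ vanish, and conclude $w\in D(P_{\max})^{\perp_\Omega}$. That conclusion presupposes that the $[\gamma_k]$ span $\mathcal E$, which is what you are trying to prove. The missing ingredient is that each endpoint term in \eqref{eq:weighted_wronskian} is, for \emph{arbitrary} $u,v\in D(P_{\max})$, a bilinear form in local data.
\begin{itemize}
\item At $\sigma=\pm1$ this follows from \eqref{eq:max-domain-boundary-form}. Use $gu'(\sigma)=2\sigma a^{(0)}_\sigma(u)$ and the H\"older continuity of $gu'\in H^1_{\rm loc}$. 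The logarithms then cancel, and the jump of $W_g(u,\overline v)$ across $\sigma$ equals $2\sigma\bigl(a^{(0)}_\sigma(u)\overline{a^{(1)}_\sigma(v)}-a^{(1)}_\sigma(u)\overline{a^{(0)}_\sigma(v)}\bigr)$.
\item At $\pm\infty$ it follows from writing $u=c_1u_1+c_2u_2$ by variation of constants, with real solutions $u_j$ and convergent coefficients $c_j$, as in the appendix.
\end{itemize}

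\textbf{The count at $\sigma$.} The same identity is what justifies your count of two, rather than three, data at $\sigma$. The common value $u^{\rm reg}(\sigma)$ drops out of $\Omega$. Equivalently, functions smooth across $\sigma$ lie in $D(P_{\min})$, even though they do not lie in the direct sum of the three interval minimal domains. Your text only explains the reduction from four data to three, via log matching.

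\textbf{The fallback does not repair this as stated.} Your last paragraph uses Hardy's inequality, which requires the remainder $w$ to vanish at $\sigma$. If $w(\sigma)\neq0$, the commutator term $(g\chi_\epsilon')'w$ has $L^2$ norm of order $\epsilon^{-1/2}|w(\sigma)|$ and blows up. So you must first subtract $w(\sigma)$ times a smooth bump, which is harmless because smooth functions lie in $D(P_{\min})$. Moreover, the ends $\pm\infty$ are not treated there at all.
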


\begin{proof}

Near $x=\pm1$, a local basis of solutions of $(P-z)u=0$ consists of a regular and a logarithmic solution; at either infinity a basis has leading terms $|x|^{-1}e^{\pm2\pi ix}$.
Thus all six endpoints are of limit-circle type, cf.~\cite[proof of Lemma~1.1]{CM}.
A solution of $(P-z)u=0$, $z\in\mathbb C\setminus\mathbb R$, is therefore
described by six parameters on the three components of
$\mathbb R\setminus\{\pm1\}$ and is square-integrable at every endpoint.
The continuity of $gu'$ requires the coefficients of the
logarithmic terms on the two sides of each singular point to agree.
These two relations are independent, since the logarithmic coefficient on each exterior interval can be chosen freely.
Conversely, matching these coefficients makes $gu'$ continuous, so no delta terms arise in $(gu')'$. Finite jumps in $u$ impose no further condition, since $g\delta_{\pm1}=0$.
The resulting piecewise solutions therefore belong to $D(P_{\max})$ and solve the equation distributionally on $\mathbb R$. Thus
$\dim\ker(P_{\max}-z)=4$.
\end{proof}

Since $P$ commutes with parity, one has the $\Omega$-orthogonal decomposition
$\mathcal E=\mathcal E_+\oplus\mathcal E_-$, and the deficiency indices computation gives $\dim\mathcal E=8$. A simple calculation shows that $P$ commutes with $\mathcal{F}$ on $\mathcal S(\mathbb R)$. Since $\mathcal F$ preserves $\mathcal S(\mathbb R)$, it also preserves $D(P_{\min})$ and $D(P_{\max})$, commutes with the corresponding operators, and preserves $\Omega$.

Let us choose a real even $\chi\in C_\mathrm c^\infty(\mathbb R)$ equal to $1$ near $x=\pm1$, put
$\alpha_+=\chi\log|1-x^2|$, and set
$\beta_+=\mathbf 1_{[-1,1]}$, $\alpha_-=x\alpha_+$ and
$\beta_-=x\beta_+$. These functions belong to $D(P_{\max})$.
We further write
$\widehat\alpha_\pm=\mathcal F\alpha_\pm$ and
$\widehat\beta_\pm=\mathcal F\beta_\pm$, where $\mathcal{F}$ is the unnormalized Fourier transform.

\begin{lem}\cite[Lem.~1.5]{CM}
    The equivalence classes in $\mathcal{E}$
    \[
        [\alpha_\pm],\ [\beta_\pm],\
        [\widehat\alpha_\pm],\ [\widehat\beta_\pm]
    \]
    form a basis of $\mathcal E_\pm$.
\end{lem}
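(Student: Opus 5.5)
The plan is a dimension count together with a nondegeneracy argument for the boundary form. We know $\dim\mathcal E=8$ and that parity splits $\mathcal E=\mathcal E_+\oplus\mathcal E_-$ $\Omega$-orthogonally. The four classes $[\alpha_\pm],[\beta_\pm],[\widehat\alpha_\pm],[\widehat\beta_\pm]$ have parity $\pm$: $\alpha_+,\beta_+$ are even, $\alpha_-,\beta_-$ are odd, and $\mathcal F$ preserves parity. Hence $\mathcal E_\pm$ contains these four classes. If we show that the four classes in each sector are linearly independent, then the eight classes span an $8$-dimensional subspace of $\mathcal E$, so each quadruple is a basis of its sector. Linear independence will follow once the $4\times4$ Gram matrix
\[
M_\pm=\bigl(\Omega(e_i,e_j)\bigr)_{i,j},\quad (e_1,e_2,e_3,e_4)=([\alpha_\pm],[\beta_\pm],[\widehat\alpha_\pm],[\widehat\beta_\pm]),
\]
is shown to be invertible. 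Membership in $D(P_{\max})$ is clear for $\alpha_\pm,\beta_\pm$ by direct inspection. For their Fourier transforms it holds because $\mathcal F$ preserves $D(P_{\max})$.

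The entries will be computed with the Wronskian formula \eqref{eq:weighted_wronskian}, after first recording the local behaviour of each function at the six endpoints.
\begin{itemize}
\item $\alpha_\pm$ and $\beta_\pm$ are compactly supported. Near $\pm1$, $\alpha_\pm$ has a pure logarithmic term, so $g\alpha_\pm'$ tends to a nonzero constant. $\beta_\pm$ is locally constant on each side, with a jump at $\pm1$.
\item $\widehat\alpha_\pm$ and $\widehat\beta_\pm$ are smooth. At infinity they have the asymptotics $|x|^{-1}e^{\pm2\pi ix}$; for instance $\widehat\beta_+(x)=\sin(2\pi x)/(\pi x)$.
\end{itemize}
From this the entries are as follows.
\begin{itemize}
\item The diagonal entries vanish. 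All the functions are real or have real-symmetric Fourier transforms, and $\Omega(u,u)=0$ for real $u$ because $P$ has real coefficients.
\item $\Omega(\alpha_\pm,\beta_\pm)$ comes only from the endpoints $\pm1$. It equals an explicit nonzero multiple of the product of the log coefficient and the jump, so it is $\neq0$.
\item $\Omega(\widehat\alpha_\pm,\widehat\beta_\pm)=\Omega(\alpha_\pm,\beta_\pm)$, because $\mathcal F$ preserves $\Omega$ up to the normalization constant.
\end{itemize}

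The remaining cross entries pair a compactly supported function with a Fourier transform. The Wronskian terms at infinity vanish there. At $\pm1$ only the term $g\,\alpha'\cdot(\text{value of the smooth function})$ survives, since $g\to0$ kills the other term. The jumps of $\beta$ are paired against $g\widehat{\,\cdot\,}{}'\to0$, so those contributions vanish. Thus:
\begin{itemize}
\item $\Omega(\beta_\pm,\widehat\alpha_\pm)=\Omega(\beta_\pm,\widehat\beta_\pm)=0$.
\item $\Omega(\alpha_\pm,\widehat\beta_\pm)$ is proportional to $\widehat\beta_\pm(\pm1)$. This vanishes since $\sin(2\pi x)$ vanishes at $x=\pm1$; the analogous derivative formula handles the odd case.
\item $\Omega(\alpha_\pm,\widehat\alpha_\pm)$ is proportional to $\widehat\alpha_\pm(\pm1)$, which may be nonzero. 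Call it $\kappa$.
\end{itemize}
Ordering the basis as $(\alpha,\beta,\widehat\alpha,\widehat\beta)$, the matrix $M_\pm$ has the block form $\begin{pmatrix}A&K\\K^*&A\end{pmatrix}$. Here $A=\begin{pmatrix}0&w\\ \bar w&0\end{pmatrix}$ with $w\neq0$, and $K=\begin{pmatrix}\kappa&0\\0&0\end{pmatrix}$. A direct expansion gives $\det M_\pm=|w|^4$, independent of $\kappa$, so $M_\pm$ is nondegenerate.

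The main obstacle is the bookkeeping of the cross terms. One must verify carefully, with the one-sided limits in \eqref{eq:weighted_wronskian} and the precise Fourier normalization, that the entries pairing $\beta$ with the Fourier transforms and $\alpha$ with $\widehat\beta$ really vanish. The key inputs are the vanishing of $\widehat\beta_\pm$ (respectively of the relevant combination in the odd case) at $x=\pm1$, and the fact that $g\to0$ kills the contributions of the jumps. If some cross entry turned out nonzero, I would fall back on computing $\det M_\pm$ explicitly. Its structure as $A$ on the diagonal blocks with rank-one perturbations should still leave it equal to $|w|^4$ plus terms that cancel by hermitian symmetry.
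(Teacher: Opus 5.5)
Your plan is the same as the paper's. You compute the Gram matrix of $\Omega$ on the four classes from \eqref{eq:weighted_wronskian}, use Fourier invariance of $\Omega$, and conclude from invertibility together with $\dim\mathcal E=8$. The conclusion is correct, but the cross entries are wrong.

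\textbf{The cross entries vanish.} The points $\pm1$ are interior points of $\mathbb R$, so \eqref{eq:weighted_wronskian} only records the difference of the one-sided limits of $W_g$ across each of them. Near $\pm1$, $g\alpha_\pm'$ is continuous: it equals $2x$, respectively $g\alpha_++2x^2$. Also $g\alpha_\pm\to0$, and $\widehat\alpha_\pm,\widehat\beta_\pm$ are continuous. Hence the two one-sided limits cancel, giving $\Omega(\alpha_\pm,\widehat\alpha_\pm)=\Omega(\alpha_\pm,\widehat\beta_\pm)=0$. So $\kappa=0$, and the matrix is block diagonal, exactly as in the paper.

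\textbf{Your reason for the odd case fails.} You justify $\Omega(\alpha_-,\widehat\beta_-)=0$ by vanishing at $\pm1$, but $\widehat\beta_-(\pm1)=\pm i/\pi\neq0$. With your one-sided accounting, this entry would actually come out nonzero.

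\textbf{Why the conclusion still holds.} The row of $\beta_\pm$ has the single nonzero entry $\Omega(\beta_\pm,\alpha_\pm)$; you justified the vanishing of $\Omega(\beta_\pm,\widehat\alpha_\pm)$ and $\Omega(\beta_\pm,\widehat\beta_\pm)$ correctly. Expanding the determinant along that row, and using Fourier invariance for the $(\widehat\alpha,\widehat\beta)$ block, gives $\det M_\pm=|w|^4$. This holds whatever the remaining entries of the $\alpha$-row and column are.
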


\begin{proof}
    Near $\pm1$ one has $g\alpha_+'=2x$, so \eqref{eq:weighted_wronskian} gives
    $\Omega([\alpha_+],[\beta_+])=4i$. The same holds in the odd sector, since
    $W_g(\alpha_-,\beta_-)=x^2W_g(\alpha_+,\beta_+)$.
    Pairings between the original representatives and their Fourier transforms vanish, since the former are compactly supported, while the latter are smooth across $\pm1$, where the one-sided Wronskian limits thus cancel.
    Using the fact that $\Omega$ is invariant under Fourier transform we conclude that it is represented by the matrix
    \begin{equation}\label{eq:bfm}
        \begin{pmatrix}
            0&4i&0&0\\
            -4i&0&0&0\\
            0&0&0&4i\\
            0&0&-4i&0
        \end{pmatrix}.
    \end{equation}
    This matrix is invertible, so the four classes in each sector are linearly independent. Since $\dim\mathcal E=8$, they form bases of $\mathcal E_\pm$, and $\dim\mathcal E_\pm=4$.
\end{proof}

For $[\gamma]\in\mathcal E$, define
$\ell_\gamma([u]):=i\Omega([u],[\gamma])$. In identifying the conditions below, we suppress nonzero scalar factors, since only the kernels matter in the analysis.

\begin{lem}\label{lem:boundary-functionals}\cite[Eqs.~(13)--(14) and (17)--(19)]{CM}
Let $u=u_{\rm even}+u_{\rm odd}\in D(P_{\max})$, with $u_{\rm even/odd}$ the even/odd part of $u$. Then the two conditions
$\ell_{\beta_\pm}([u_\pm])=0$ together are equivalent to \eqref{eq:bc-singular}, while
$\ell_{\widehat\beta_+}([u_{\rm even}])=0$ and $\ell_{\widehat\beta_-}([u_{\rm odd}])=0$ are equivalent to \eqref{eq:bc-infty-even} and \eqref{eq:bc-infty-odd}, respectively:
\begin{align}
    &\lim_{x\to\pm1}(1-x^2)u'(x)=0, \label{eq:bc-singular}\\
    &\lim_{x\to\pm\infty}
    \Big(
        x\sin(2\pi x)u_{\rm even}'(x)
        -(2\pi x\cos(2\pi x)-\sin(2\pi x))u_{\rm even}(x)
    \Big)=0, \label{eq:bc-infty-even}\\
    &\lim_{x\to\pm\infty}
    \Big(
        x\cos(2\pi x)u_{\rm odd}'(x)
        +(2\pi x\sin(2\pi x)+\cos(2\pi x))u_{\rm odd}(x)
    \Big)=0. \label{eq:bc-infty-odd}
\end{align}
\end{lem}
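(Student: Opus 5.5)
The plan is to compute each functional $\ell_\gamma([u])=i\Omega([u],[\gamma])$ directly from the boundary-term formula \eqref{eq:weighted_wronskian}, that is, as a sum of one-sided limits of $-W_g(u,\overline\gamma)$. Two reductions come first. The classes $[\beta_\pm],[\widehat\beta_\pm]$ are real, up to a harmless factor of $i$ in the odd sector. Since $\Omega$ respects the parity decomposition, $\ell_{\beta_+}$ and $\ell_{\widehat\beta_+}$ only see $u_{\rm even}$, and $\ell_{\beta_-}$ and $\ell_{\widehat\beta_-}$ only see $u_{\rm odd}$. Scalar factors are suppressed throughout, since only kernels matter.

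\emph{Singular points.} On $(-1,1)$ we have $\beta_+\equiv1$, and $\beta_+=0$ elsewhere. Hence $-W_g(u,\beta_+)=-gu'$ inside and zero outside, and
\[
\ell_{\beta_+}([u])=\mp\bigl[gu'\bigr]_{-1^+}^{1^-}.
\]
For $u$ even, $gu'$ is odd, so this reduces to a multiple of $(gu_{\rm even}')(1)$. For $\beta_-=x\mathbf 1_{[-1,1]}$ we get $W_g(u,\beta_-)=-g(u-xu')$ on $(-1,1)$. Here we use $gu\to0$ at $\pm1$, which holds because $u$ has only log, step and $H^1$ parts by \eqref{eq:max-domain-boundary-form}. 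This reduces $\ell_{\beta_-}$ to a multiple of $(gu_{\rm odd}')(1)$. Since $gu'$ is continuous across $\pm1$, the values $(gu')(\pm1)$ are determined by the even and odd parts. Thus both functionals vanish if and only if $\lim_{x\to\pm1}(1-x^2)u'=0$, which is \eqref{eq:bc-singular}.

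\emph{Infinity.} With the unnormalized transform, $\widehat\beta_+=\frac{\sin(2\pi x)}{\pi x}$ and $\widehat\beta_-$ is a constant multiple of $\frac{2\pi x\cos(2\pi x)-\sin(2\pi x)}{x^2}$. Both are smooth across $\pm1$, so the one-sided limits there cancel and only the terms at $\pm\infty$ survive. Writing $v=\widehat\beta_+$, one computes
\[
-W_g(u,v)=\frac{x^2-1}{\pi x^2}\Bigl(x\sin(2\pi x)u'-(2\pi x\cos(2\pi x)-\sin(2\pi x))u\Bigr).
\]
For even $u$ the bracket is odd in $x$. Hence the total boundary term equals $\frac{2}{\pi}$ times the limit at $+\infty$, and it vanishes iff \eqref{eq:bc-infty-even} holds at both ends. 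The odd sector works the same way. Here $v'$ has leading part $-4\pi^2\sin(2\pi x)/x$, and the leading terms of $g(vu'-uv')$ reproduce $2\pi\bigl(x\cos(2\pi x)u'+(2\pi x\sin(2\pi x)+\cos(2\pi x))u\bigr)$.

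\emph{Main obstacle.} The delicate point is justifying that the lower-order terms, such as $(x^2-1)/x^2-1=O(x^{-2})$ and the $O(x^{-2})$ corrections in $v$ and $v'$, do not contribute, and that the limits exist at all for general $u\in D_{\max}(P)$. For this I would use $D_{\max}=D_{\min}+\operatorname{span}\{\chi_\infty w_j\}$, where $w_j$ are the solutions with asymptotics $\sin(2\pi x)/|x|+O(|x|^{-2})$ and $\cos(2\pi x)/|x|+O(|x|^{-2})$ recalled in \S\ref{subsection:boundary-data}, and $\chi_\infty$ is a cutoff supported near infinity. On $D_{\min}$ every boundary functional vanishes, because the Wronskian with any element of $D_{\max}$ tends to $0$; this is the nondegeneracy and descent of $\Omega$. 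On the finitely many $w_j$ the asymptotics give $u,u'=O(|x|^{-1})$, so each bracket is bounded and the error terms are $O(x^{-1})\to0$. This reduces the equivalence to an explicit check on $w_j$, completing the proof.
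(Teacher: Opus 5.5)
Your computations at $x=\pm1$ and in the even sector at infinity are the same as the paper's and are fine. Your formula for $-W_g(u,\widehat\beta_+)$ has the opposite overall sign to the convention $W_g(u,v)=-g(uv'-vu')$, which is harmless.

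The gap is in your ``main obstacle'' paragraph, which is where the odd-sector equivalence is actually decided. You write $u=u_0+\sum_j c_j\chi_\infty w_j$ with $u_0\in D(P_{\min})$, and dispose of $u_0$ because its Wronskians with elements of $D_{\max}(P)$ tend to $0$. That gives $\ell_{\widehat\beta_-}([u_0])=0$. It does not give that the bracket in \eqref{eq:bc-infty-odd} tends to $0$ for $u_0$. In your own derivation the bracket is only the leading part of $W_g(u,\overline{\widehat\beta_-})$, and the discrepancy is $O(|u_0|+|u_0'|)$. That is exactly the quantity you set out to control, now for an element about which you have no pointwise information. So the reduction to an explicit check on the $w_j$ is not yet justified.

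The step can be repaired in either of two ways:
\begin{itemize}
\item Observe that the bracket is \emph{exactly} a Wronskian: $x\cos(2\pi x)u'+(2\pi x\sin(2\pi x)+\cos(2\pi x))u=\frac{x^2}{x^2-1}W_g(u,w)$ with $w=\cos(2\pi x)/x$. Here $\chi_\infty w\in D_{\max}(P)$ because $Pw=O(|x|^{-1})$, so the bracket limit vanishes on $D(P_{\min})$ like any endpoint Wronskian.
\item Invert the two relations $W_g(u_0,w_j)\to0$. The coefficient matrix has determinant $\sim x^{-2}$ and entries $O(|x|^{-1})$, which gives $u_0,u_0'=o(|x|^{-1})$.
\end{itemize}

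There are two further issues with the decomposition itself. First, $D_{\max}=D_{\min}+\operatorname{span}\{\chi_\infty w_j\}$ is false as written. Since $\dim\mathcal E=8$, the logarithmic and Heaviside classes at $\pm1$ are missing, so the decomposition has to be stated locally at each infinite end. Second, you rely on two facts: that $[\chi_\infty w_1],[\chi_\infty w_2]$ span the local boundary space at an end, and that Wronskian limits against $D(P_{\min})$ vanish at each endpoint separately. Descent of $\Omega$ only gives vanishing of the sum over all endpoints; the endpoint-wise version needs a cutoff/localization argument. Both facts are standard limit-circle theory, but their proofs rest on the same estimate the paper uses directly.

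The paper's route is shorter. It takes $f=Pu\in L^2$ and writes $u=c_1u_1+c_2u_2$, $u'=c_1u_1'+c_2u_2'$ by variation of constants on an exterior half-line. It notes $|c_j'|\lesssim|f|/|x|$, which is integrable by Cauchy--Schwarz, and concludes $u,u'=O(|x|^{-1})$ for every $u\in D_{\max}(P)$. The $O(|u|+|u'|)$ error terms then vanish, with no decomposition modulo $D(P_{\min})$ at all.
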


\begin{proof}
We first consider the singular points $\pm1$. For
$u_\text{even}\in D(P_{\max}^+)$, the function $\beta_+$ equals $1$ on
$(-1,1)$ and $0$ outside. Hence using \eqref{eq:weighted_wronskian} we get
\[
    i\widetilde\Omega(u_\text{even},\beta_+)
    =
    -\lim_{x\to 1^-}g(x)u_\text{even}'(x)
    +\lim_{x\to -1^+}g(x)u_\text{even}'(x).
\]
By parity the two limits are opposite, and therefore
$\ell_{\beta_+}([u_\text{even}])=0$ if and only if
$\lim_{x\to 1^-}(1-x)u_\text{even}'(x)=0$. For
$u_\text{odd}\in D(P_{\max}^-)$ one similarly has, on $(-1,1)$,
\[
    W_g(u_\text{odd},\beta_-)
    =g(x)\big(xu_\text{odd}'(x)-u_\text{odd}(x)\big).
\]
Since $g(x)u_\text{odd}(x)\to0$ as $x\to\pm1$, parity again implies
$\ell_{\beta_-}([u_\text{odd}])=0$ if and only if
$\lim_{x\to 1^-}(1-x)u_\text{odd}'(x)=0$. By the continuity of
$gu'$ across $\pm1$, these two
conditions together are equivalent to \eqref{eq:bc-singular}.

At infinity, we have by \cite[Lemma~1.4(ii)]{CM}
\[
    \widehat\beta_+(x)=\frac{\sin(2\pi x)}{\pi x}.
\]
A direct computation yields
\[
    W_g(u_\text{even},\widehat\beta_+)(x)
    =
    \frac{x^2-1}{\pi x^2}
    \Big(
        x\sin(2\pi x)u_\text{even}'(x)
        -(2\pi x\cos(2\pi x)-\sin(2\pi x))u_\text{even}(x)
    \Big).
\]
Since $\widehat\beta_+$ is smooth, $gu_\text{even}'$ is continuous and $gu_\text{even}\to0$ at $\pm1$,  the contributions from the finite endpoints  in \eqref{eq:weighted_wronskian} cancel.
Since the Wronskian is odd, the contributions from $+\infty$ and
$-\infty$ add rather than cancel in the boundary form. Since $(x^2-1)/x^2\to1$, we obtain
\eqref{eq:bc-infty-even}.

Finally,
\[
    \widehat\beta_-(x)
    =\mathcal F(x\beta_+)(x)
    =\frac{i}{2\pi}\widehat\beta_+'(x)
    =\frac{i}{2\pi^2x^2}
      \big(2\pi x\cos(2\pi x)-\sin(2\pi x)\big).
\]
To justify discarding the lower-order terms, we note that $u,u'=O(|x|^{-1})$ by the following argument.
For $f=Pu\in L^2$, choose on each exterior half-line
a basis $u_1,u_2$ of $Pu=0$ with
$u_j,u_j'=O(|x|^{-1})$.
By variation of constants we obtain
\[
    u=c_1u_1+c_2u_2,\quad
    u'=c_1u_1'+c_2u_2',\quad
    |c_j'(x)|\lesssim \frac{|f(x)|}{|x|},
\]
since the weighted Wronskian is constant and nonzero.
The last bound is integrable by Cauchy--Schwarz, so the
coefficients have finite limits at infinity. Consequently,
$u,u'=O(|x|^{-1})$, and the omitted terms tend to zero.

Substituting the  expression for  $\widehat\beta_-$   gives that $W_g(u_\text{odd},\overline{\widehat\beta_-})$ equals
\[
-\frac{i(x^2-1)}{\pi x^2}
    \Big(
        x\cos(2\pi x)u_\text{odd}'(x)
        +(2\pi x\sin(2\pi x)+\cos(2\pi x))u_\text{odd}(x)
        +o(1)
    \Big)
\]
as $x\to\infty$; the omitted terms are $O(|u_\text{odd}(x)|+|u_\text{odd}'(x)|)$.
The finite-endpoint contributions cancel as before, and parity yields the same condition at $-\infty$. This proves \eqref{eq:bc-infty-odd}.
\end{proof}

\begin{defn}
    We denote by $P_{\mathrm{sa}}$ the self-adjoint extension of
    $P_{\min}$ corresponding to $\mathcal L_\beta$, where
    \[
    \mathcal L_\beta
    :=
    \bigcap_{\pm}\ker\ell_{\beta_\pm}
    \cap
    \bigcap_{\pm}\ker\ell_{\widehat\beta_\pm} = \operatorname{span}
    \{[\beta_+],[\beta_-],
      [\widehat\beta_+],[\widehat\beta_-]\} \subset \mathcal E.
    \]
\end{defn}

By \eqref{eq:bfm} applied in each parity sector, the four generators of $\mathcal L_\beta$ are pairwise $\Omega$-orthogonal, so $\mathcal L_\beta$ is isotropic. Since $\mathcal L_\beta$ is four-dimensional, while
$\dim\mathcal E=8$ and $\Omega$ is nondegenerate,
$\mathcal L_\beta$ is maximal isotropic, hence Lagrangian.

\begin{rem}
    By Lemma~\ref{lem:boundary-functionals} and Theorem~\ref{thm:lagrangian-selfadjoint-extensions}, $D(P_{\mathrm{sa}})$ is exactly the
set of $u=u_\text{even}+u_\text{odd}\in D(P_{\max})$ satisfying
\eqref{eq:bc-singular}--\eqref{eq:bc-infty-odd}.
\end{rem}

\begin{rem}
Near $x=\pm1$ an element of $D(P_{\max})$ may have a logarithmic term 
with coefficient  proportional to
$\lim_{x\to\pm1}(1-x^2)u'(x)$. Thus
\eqref{eq:bc-singular} eliminates the logarithmic term, but allows finite jumps at $\pm1$.
For eigenfunctions, the conditions
\eqref{eq:bc-infty-even} and \eqref{eq:bc-infty-odd} select respectively
the asymptotic profiles $\frac{\sin(2\pi x)}{x}$ and $\frac{\cos(2\pi x)}{x}$ in the
even and odd sectors.
\end{rem}

Let $Q$ denote multiplication by $\mathbf 1_{[-1,1]}$ and set
$\widehat Q=\mathcal FQ\mathcal F^{-1}$.

\begin{thm}[Connes--Moscovici {\cite[Thm.~1.6]{CM}}]
\label{thm:CM-extension}
    The operator $P_{\mathrm{sa}}$ commutes with
    $\mathcal F$, $Q$ and $\widehat Q$. Moreover, it is the unique
    self-adjoint extension of $P_{\min}$ commuting with $Q$ and
    $\widehat Q$.
\end{thm}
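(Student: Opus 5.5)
The plan is to verify the three commutation relations through the boundary-space description of Theorem~\ref{thm:lagrangian-selfadjoint-extensions}, and then obtain uniqueness by showing that commutation forces the boundary Lagrangian to lie in $\mathcal L_\beta$. Throughout, I use that a self-adjoint $T$ commutes with a bounded operator $B$ if and only if $B D(T)\subset D(T)$ and $TBu=BTu$ for $u\in D(T)$.

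\textbf{Commutation with $Q$.} The key local fact concerns $u\in D_{\max}(P)$. I claim that $Qu\in D_{\max}(P)$ if and only if $\lim_{x\to\pm1}g(x)u'(x)=0$, with the limits taken from inside $(-1,1)$. To see this, note that $g(Qu)'$ equals $\mathbf 1_{(-1,1)}gu'$. Its distributional derivative therefore differs from $\mathbf 1_{(-1,1)}(gu')'$ by the jump terms $\mp\lim_{x\to\pm1^\mp}(gu')\,\delta_{\pm1}$. The jump of $Qu$ itself at $\pm1$ produces no extra term, because $g\delta_{\pm1}=0$. By \eqref{eq:max-domain-boundary-form} and continuity of $gu'$, the required vanishing is exactly \eqref{eq:bc-singular}. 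When it holds we obtain $P(Qu)=QPu$.

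Now let $u\in D(P_{\rm sa})$. Then $Qu\in D_{\max}$ satisfies \eqref{eq:bc-singular} trivially from outside and by hypothesis from inside, and it vanishes near $\pm\infty$. Hence $Qu\in D(P_{\rm sa})$, which gives $[P_{\rm sa},Q]=0$.

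\textbf{Commutation with $\mathcal F$ and $\widehat Q$.} It was noted above that $\mathcal F$ preserves $D(P_{\min})$ and $D(P_{\max})$, commutes with $P_{\max}$, and preserves $\Omega$. It therefore acts on $\mathcal E$, and it suffices to show $\mathcal F\mathcal L_\beta=\mathcal L_\beta$. By definition $\mathcal F[\beta_\pm]=[\widehat\beta_\pm]$. Moreover $\mathcal F[\widehat\beta_\pm]=[\mathcal F^2\beta_\pm]$, and $\mathcal F^2\beta_\pm(x)=\beta_\pm(-x)=\pm\beta_\pm(x)$. Since the paper's $\mathcal F$ is the unnormalized transform with $2\pi$ in the exponent, this identity should hold with no extra constant, but I would double-check the normalization; in any case a nonzero scalar suffices. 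Thus $\mathcal F D(P_{\rm sa})=D(P_{\rm sa})$ and $P_{\rm sa}\mathcal F=\mathcal FP_{\rm sa}$. Consequently $\widehat Q=\mathcal FQ\mathcal F^{-1}$ commutes with $P_{\rm sa}$ as well.

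\textbf{Uniqueness.} Let $T$ be a self-adjoint extension of $P_{\min}$ commuting with $Q$ and $\widehat Q$, and set $\mathcal L=\pi(D(T))$, which is Lagrangian by Theorem~\ref{thm:lagrangian-selfadjoint-extensions}.
\begin{itemize}
\item For $u\in D(T)$ we have $Qu\in D(T)\subset D_{\max}$. By the local fact above, $u$ satisfies \eqref{eq:bc-singular}, so $\ell_{\beta_\pm}([u])=0$ by Lemma~\ref{lem:boundary-functionals}, applied to the parity components. This step needs care, since $u$ need not have definite parity. I would rather use directly that the computation in that lemma expresses the pair $\ell_{\beta_\pm}$ through the inner limits of $gu'$ at $\pm1$.
\item Similarly, $\widehat Qu\in D_{\max}$ gives $Q\mathcal F^{-1}u=\mathcal F^{-1}\widehat Qu\in D_{\max}$. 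Since $\mathcal F^{-1}u\in D_{\max}$, we get $\ell_{\beta_\pm}([\mathcal F^{-1}u])=0$. By $\Omega$-invariance under $\mathcal F$ and the relation $\mathcal F[\beta_\pm]=[\widehat\beta_\pm]$, this is equivalent to $\ell_{\widehat\beta_\pm}([u])=0$.
\end{itemize}
Hence $\mathcal L\subset\mathcal L_\beta$. Both subspaces are Lagrangian of dimension $4$ in the $8$-dimensional space $\mathcal E$, so they are equal, and therefore $T=P_{\rm sa}$.

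The main obstacle I expect is the precise bookkeeping in the local fact together with the identification of the four functionals $\ell_{\beta_\pm},\ell_{\widehat\beta_\pm}$. Specifically, I need to check that the condition $Qu\in D_{\max}$ is equivalent to both inner limits of $gu'$ vanishing, and not merely to a parity-symmetrized combination of them. I also need to track how the constants in $\mathcal F^2$ and in the Wronskian pairings enter, although only kernels matter, so nonzero scalars are harmless.
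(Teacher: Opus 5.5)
Your proof is correct, but for uniqueness it runs in the opposite direction from the paper's sketch.

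\textbf{The paper's route.} It proves $\mathcal L_\beta\subset\pi(D(T))$. It applies $Q$ to Schwartz functions equal to $1$ and $x$ on $[-1,1]$, which lie in $D(P_{\min})\subset D(T)$, to get $\beta_\pm\in D(T)$. It applies $\widehat Q$ to their Fourier transforms to get $\widehat\beta_\pm\in D(T)$. The Lagrangian dimension count then gives equality.

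\textbf{Your route.} You prove $\pi(D(T))\subset\mathcal L_\beta$ instead. You apply $Q$ to an arbitrary $u\in D(T)$ and use your local lemma: $Qu\in D_{\max}(P)$ if and only if $(gu')(\pm1)=0$, and in that case $P(Qu)=QPu$.

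\textbf{Comparison.} The paper's route is cheaper: it needs no analysis of $D_{\max}(P)$ near $\pm1$, only the definition of $\beta_\pm$. However, it does not by itself show that $P_{\rm sa}$ commutes with $Q$, and the sketch leaves this implicit. Your local lemma does double duty and supplies exactly that existence statement. The $\mathcal F$-invariance of $\mathcal L_\beta$, and hence commutation with $\mathcal F$ and $\widehat Q$, is handled the same way in both.

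\textbf{Your hedges resolve cleanly.}
\begin{itemize}
\item From the Wronskian formula \eqref{eq:weighted_wronskian}, $\ell_{\beta_+}([u])$ is proportional to $(gu')(-1)-(gu')(1)$, and $\ell_{\beta_-}([u])$ is proportional to $(gu')(1)+(gu')(-1)$. So the pair vanishes if and only if both values of $gu'$ at $\pm1$ vanish, with no parity decomposition needed.
\item With the convention $e^{-2\pi i x\xi}$, which is consistent with $\widehat\beta_+(x)=\sin(2\pi x)/(\pi x)$, one has $\mathcal F^2 f=f(-\,\cdot\,)$ exactly. Hence $\mathcal F[\widehat\beta_\pm]=\pm[\beta_\pm]$ with no stray constants.
\item Your identity $g(Qu)'=\mathbf 1_{(-1,1)}gu'$ also holds when $u$ has a logarithmic term at $\pm1$. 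For example, $(x-1)\partial_x\bigl(H(1-x)\log(1-x)\bigr)=H(1-x)$, with no delta term. So the lemma applies to all of $D_{\max}(P)$, as your uniqueness argument requires.
\end{itemize}
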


\begin{proof}[Sketch of proof]Since \(Q\) applied to smooth functions which agree with \(1\) and \(x\) on \([-1,1]\) produces \(\beta_+\) and \(\beta_-\) by \textit{definition} of $\beta_\pm$, any self-adjoint extension commuting with \(Q\) must contain these two boundary classes. Likewise, commutation with \(\widehat Q=\mathcal FQ\mathcal F^{-1}\) forces the classes \([\widehat\beta_+]\) and \([\widehat\beta_-]\). These four boundary classes span $\mathcal L_\beta$. Hence the boundary
	subspace of any self-adjoint extension commuting with both $Q$ and
	$\widehat Q$ contains $\mathcal L_\beta$. Since both subspaces are
	Lagrangian, they coincide, which proves uniqueness. Finally, \(\mathcal L_\beta\) is easily seen to be invariant under \(\mathcal F\), so the resulting extension also commutes with \(\mathcal F\).
\end{proof}

\section{Symbolic estimates}\label{appendix:symbolic-estimates}
The purpose of this appendix is to justify the two symbol estimates used in
\S\ref{subsection:phases-model-quasimodes}. Lemma~\ref{lem:transport} proves
that the Fourier transport coefficients defined in
\eqref{eq:b_0}--\eqref{eq:b_j} satisfy $b_j\in S^{-1-j}$, together with
the corresponding $h$-derivative estimates. Lemma~\ref{lem:log-symbol-aj}
proves that the WKB coefficients defined in
\eqref{eq:WKBa_0}--\eqref{eq:WKBa_j} satisfy
$a_j\in\mathcal S^j_{-1/2-j}$ in the sense of
Definition~\ref{def:symbolclasses}. These estimates are used in
Lemmas~\ref{lem:first-log-correction-d} and \ref{lem:h-variation-quasimodes}.

\subsection{Fourier transport coefficients}
\begin{lem}[Symbol class of transport coefficients]\label{lem:transport}
Let $0<\x_0<\x_1$ and $h_0>0$ satisfy $2\pi h_0<\x_0/2$, and let
$0<h\leq h_0$. Choose $\chi\in C^\infty(\re)$ with $\chi=0$ for
$\x\leq\x_0$ and $\chi=1$ for $\x\geq\x_1$. Put $c=2\pi h$ and
\[
\alpha(\x,h)=\partial_xp_h(F'(\x),\x)
=2\sqrt{(\x^2+1)(\x^2-c^2)},\quad
b_0=\chi\alpha^{-1/2}.
\]
For $j\geq0$, define
\[
r_j=-\partial_\x\bigl((\x^2-c^2)\partial_\x b_j\bigr),\quad
b_{j+1}(\x,h)=i\chi(\x)\alpha(\x,h)^{-1/2}
\int_\infty^\x\alpha(\eta,h)^{-1/2}r_j(\eta,h)\,d\eta.
\]
Then $\partial_h^\beta b_j\in S^{-1-j}$ for all $j,\beta\in\mathbb N_0$.
\end{lem}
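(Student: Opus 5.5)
The proof is an induction on $j$. The key preliminary is that the basic functions $\alpha^{\pm1/2}$ and $\x^2-c^2$ are classical symbols uniformly in $h$ on the region $\x\geq\x_0$, including after any number of $h$-derivatives.

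\emph{Basic symbols.} Since $c=2\pi h\leq2\pi h_0<\x_0/2$, on $\supp\chi\subset\{\x\geq\x_0\}$ we have $\x^2-c^2\geq\tfrac34\x^2$. Hence $(\x^2-c^2)^{1/2}=\x(1-c^2\x^{-2})^{1/2}$. The function $c^2\x^{-2}$ lies in $S^{-2}$ with values in $[0,\tfrac14]$, and $t\mapsto(1-t)^{\pm1/4}$ is smooth on $[0,\tfrac14]$. It follows that
\[
\alpha^{\pm1/2}=2^{\pm1/2}(\x^2+1)^{\pm1/4}\,\x^{\pm1/2}(1-c^2\x^{-2})^{\pm1/4}\in S^{\pm1}.
\]
For $h$-derivatives, note that $\partial_h$ falls only on $c^2=4\pi^2h^2$. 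Each $\partial_h$ produces a factor $8\pi^2h\,\x^{-2}$ times a smooth function of $c^2\x^{-2}$, so in fact $\partial_h^\beta\alpha^{\pm1/2}\in S^{\pm1-2}$ for $\beta\geq1$. Similarly $\x^2-c^2\in S^2$ with $\partial_h^\beta(\x^2-c^2)\in S^0$. The cutoff $\chi$ is independent of $h$ and belongs to $S^0$. It follows that $\partial_h^\beta b_0\in S^{-1}$ for all $\beta$.

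\emph{Inductive step.} Assume $\partial_h^\beta b_j\in S^{-1-j}$ for all $\beta$. Since $\partial_\x$ lowers the order by one and multiplication by $\x^2-c^2$ raises it by two, Leibniz' rule gives $\partial_h^\beta r_j\in S^{-1-j}$. The key point is that $r_j$ is supported where $\x\geq\x_0$. There $\alpha^{-1/2}r_j\in S^{-2-j}$, which is integrable at infinity since $-2-j\leq-2<-1$. So the primitive
\[
\Phi_j(\x):=\int_\infty^\x\alpha^{-1/2}r_j\,d\eta
\]
is well defined and satisfies $|\Phi_j(\x)|\lesssim\langle\x\rangle^{-1-j}$ for $\x\geq\x_0$. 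Its derivatives are $\partial_\x^k\Phi_j=\partial_\x^{k-1}(\alpha^{-1/2}r_j)$, which are $O(\langle\x\rangle^{-1-j-k})$. Thus $\Phi_j\in S^{-1-j}$ on $\{\x\geq\x_0\}$, and only this region matters because of the factor $\chi$. Then $b_{j+1}=i\chi\alpha^{-1/2}\Phi_j\in S^0\cdot S^{-1}\cdot S^{-1-j}=S^{-2-j}$, as required.

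\emph{$h$-derivatives and uniformity.} For $h$-derivatives we differentiate under the integral sign, which is justified because every $\partial_h^\beta(\alpha^{-1/2}r_j)$ lies in $S^{-2-j}$ uniformly in $h$. The same integration argument then applies to each term in the Leibniz expansion.

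The only delicate point is uniformity as $h\to0$. Near $\x=c$ the function $\alpha$ degenerates, but this region is excluded by $\x\geq\x_0>2c$. Every constant above depends only on $\x_0$, $h_0$ and $\chi$, so all bounds hold uniformly in $0<h\leq h_0$.
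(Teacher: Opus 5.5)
Your proof is correct and follows the paper's argument essentially step by step. You factor $\alpha^{-1/2}$ using $c^2\x^{-2}\le\tfrac14$ on $\x\geq\x_0$ and then induct. In the inductive step you bound $r_j$ by Leibniz' rule, control the primitive of $\alpha^{-1/2}r_j\in S^{-2-j}$ by direct integration for zero derivatives and via $\partial_\x\Phi_j=\alpha^{-1/2}r_j$ for higher ones, and conclude from $b_{j+1}=i\chi\alpha^{-1/2}\Phi_j$. Your extra observation that $\partial_h^\beta\alpha^{\pm1/2}\in S^{\pm1-2}$ for $\beta\geq1$ is a valid sharpening, but the paper does not need it.
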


\begin{proof}
We work on the $(+,+)$ component. Since $c^2\x^{-2}\leq1/4$ for
$\x\geq\x_0$,
\[
\alpha^{-1/2}
=2^{-1/2}\x^{-1}(1+\x^{-2})^{-1/4}(1-c^2\x^{-2})^{-1/4}.
\]
Differentiating this expression proves
\begin{align}\label{eq:bound on A(xi,h)}
|\partial_\x^\alpha\partial_h^\beta\alpha^{-1/2}(\x,h)|
\leq C_{\alpha\beta}\langle\x\rangle^{-1-\alpha},
\quad\x\geq\x_0,
\end{align}
for all $\alpha,\beta\in\mathbb N_0$, and hence the assertion for $b_0$.
Assume it holds for $b_j$. The exact transport identity in Step~1 of the
proof of Proposition~\ref{prop:quasimode} identifies the remainder as
\[
r_j=-(\x^2-c^2)b_j''-2\x b_j'.
\]
Its coefficients have orders two and one, respectively, compensated by
the corresponding derivatives of $b_j$. Thus
$\partial_h^\beta r_j\in S^{-1-j}$ for every $\beta$.
Set
\[
u_j=\alpha^{-1/2}r_j,\quad
I_j(\x,h)=\int_\x^\infty u_j(\eta,h)\,d\eta.
\]
By \eqref{eq:bound on A(xi,h)} and Leibniz' rule,
\[
|\partial_\eta^\alpha\partial_h^\beta u_j(\eta,h)|
\leq C_{\alpha\beta j}\langle\eta\rangle^{-2-j-\alpha},\quad
|\partial_\x^\alpha\partial_h^\beta I_j(\x,h)|
\leq C_{\alpha\beta j}\langle\x\rangle^{-1-j-\alpha}.
\]
For $\alpha=0$ the latter estimate follows by integration; for
$\alpha\geq1$ use $\partial_\x^\alpha I_j=-\partial_\x^{\alpha-1}u_j$.
Since $b_{j+1}=-i\chi\alpha^{-1/2}I_j$, another application of
Leibniz' rule proves $\partial_h^\beta b_{j+1}\in S^{-2-j}$.
\end{proof}

\subsection{WKB transport coefficients}
\begin{lem}[Logarithmic symbol class of $a_j$]\label{lem:log-symbol-aj}
On the $(+,+)$ component and for $0<h\leq1$, let $a_0=|g\f'|^{-1/2}$
and, for $j\geq0$,
\[
a_{j+1}(x,h)=\frac{i}{2}|g(x)\f'(x)|^{-1/2}
\int_\infty^x|g(y)\f'(y)|^{-1/2}
\bigl(g(y)a_j''(y,h)+g'(y)a_j'(y,h)\bigr)\,dy.
\]
Then $a_j\in\mathcal S^j_{-1/2-j}$ for $x\geq2$. The corresponding
estimates on the other components follow by conjugation and reflection.
\end{lem}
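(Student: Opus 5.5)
We argue by induction on $j$, working throughout in the regime $x\geq2$, where $X\sim x$, $Y=1+hx$, and $L_h(x)=1+\log(Y/(hX))$. In this regime $L_h\sim1+\log(1/(hx))$ for $x\leq h^{-1}$ and $L_h\sim1$ for $x\geq h^{-1}$.

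\textbf{Base case.} Write $a_0=(x^2-1)^{-1/4}(1+c^2x^2)^{-1/4}$ with $c=2\pi h$. The factor $(x^2-1)^{-1/4}$ is a classical symbol of order $-\tfrac12$ on $x\geq2$. The factor $(1+c^2x^2)^{-1/4}$ is a function of $s=cx$ alone. Hence each $x$-derivative costs $c\,(1+s)^{-1}\lesssim x^{-1}$. Each $h$-derivative acts as $h^{-1}s\partial_s$, which costs $h^{-1}$ and preserves the $Y$-weight. Together these give $|\partial_x^\alpha\partial_h^\beta a_0|\lesssim h^{-\beta}X^{-1/2-\alpha}Y^{-1/2}$, that is, $a_0\in\mathcal S^0_{-1/2}$. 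The same scaling argument shows that $|g\f'|^{\pm1/2}$ lies in the analogous class with weight $X^{\pm1/2}Y^{\pm1/2}$, with no logarithms.

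\textbf{Inductive step.} Assume $a_j\in\mathcal S^j_{-1/2-j}$. Since $g\sim X^2$ and $g'\sim X$ are symbols, the integrand
\[
q_j:=|g\f'|^{-1/2}\bigl(ga_j''+g'a_j'\bigr)
\]
obeys
\[
|\partial_y^\alpha\partial_h^\beta q_j|\lesssim h^{-\beta}X^{-1-\alpha}Y^{-1-j}L_h^{j}.
\]
Here we use that $\partial_x L_h=O(X^{-1})$ and $h\partial_hL_h=O(1)$, both of which follow from the formula for $L_h$; consequently derivatives of $L_h^j$ keep the same form.

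The main estimate is the tail integral $\int_x^\infty X^{-1}Y^{-1-j}L_h^{j}\,dy$, which we bound in two regimes.
\begin{itemize}
\item[(a)] For $x\geq h^{-1}$ we have $L_h\sim1$ and $Y\sim hy$. The integral is then $\lesssim\int_x^\infty y^{-1}(hy)^{-1-j}\,dy\sim Y^{-1-j}\lesssim Y^{-1-j}L_h^{j+1}$.
\item[(b)] For $2\leq x\leq h^{-1}$ we split at $h^{-1}$. The piece over $y\geq h^{-1}$ contributes $O(1)$. The piece over $[x,h^{-1}]$ is $\int_x^{1/h}y^{-1}(1+\log\frac1{hy})^j\,dy$; substituting $t=\log(1/(hy))$ gives $\sim(1+\log\frac1{hx})^{j+1}\sim L_h^{j+1}$.
\end{itemize}
This is exactly where the extra logarithm appears, and it is the expected main obstacle: at each step the borderline $y^{-1}$ integrand must raise the power of $L_h$ by exactly one, and the two regimes must be glued uniformly in $h$. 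Combining (a) and (b), $\int_x^\infty q_j\lesssim Y^{-1-j}L_h^{j+1}$. Multiplying by $|g\f'|^{-1/2}\lesssim X^{-1/2}Y^{-1/2}$ then yields the zeroth-order bound $|a_{j+1}|\lesssim X^{-1/2}Y^{-3/2-j}L_h^{j+1}$.

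\textbf{Higher derivatives.} For $x$-derivatives, each derivative either falls on the prefactor, costing $X^{-1}$ by the base-case analysis, or differentiates the integral. In the latter case it removes the integral and returns $-q_j$, which carries an extra $X^{-1}$ relative to the integral bound; further derivatives then follow from the estimates on $q_j$. For $h$-derivatives, we differentiate under the integral sign. The estimate on $\partial_h^\beta q_j$ loses exactly $h^{-\beta}$, and the same two-regime integration applies because the integrals converge absolutely at infinity (since $Y^{-1-j}y^{-1}$ is integrable). Leibniz' rule then combines these into $a_{j+1}\in\mathcal S^{j+1}_{-3/2-j}$, which closes the induction.
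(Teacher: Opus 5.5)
Your proposal is correct and follows the paper's proof essentially step by step. The shared steps are: induction from the scaling bound for $a_0$; the bound $h^{-\beta}X^{-1-\alpha}Y^{-1-j}L_h^{j}$ on the integrand; a tail estimate gaining exactly one power of $L_h$; and then $x$-derivatives via the fundamental theorem of calculus, $h$-derivatives under the integral sign, and Leibniz' rule. The only difference is in the tail bound. The paper obtains $\int_x^\infty X^{-1}Y^{-m}L_h^{\ell}\,dy\leq Y^{-m}L_h^{\ell+1}$ in one stroke from the differential inequality $-\partial_x(Y^{-m}L_h^{\ell+1})\geq X^{-1}Y^{-m}L_h^{\ell}$, with no case split and constant one. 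You instead split at $y=h^{-1}$ and substitute $t=\log(1/(hy))$; both arguments are valid.
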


\begin{proof}
For $x\geq2$, put $X=1+x$, $Y=1+hx$ and $L_h=1+\log(Y/(hX))$.
We have
\[
a_0(x,h)=(x^2-1)^{-1/4}f(hx),\quad
f(t)=(1+4\pi^2t^2)^{-1/4}.
\]
The estimates $|f^{(r)}(t)|\leq C_r(1+t)^{-1/2-r}$ and
$\partial_h^\beta f(hx)=h^{-\beta}(hx)^\beta f^{(\beta)}(hx)$,
together with the product rule, imply
\begin{align}\label{eq:rho-bound}
|\partial_x^\alpha\partial_h^\beta a_0(x,h)|
\leq C_{\alpha\beta}h^{-\beta}X^{-1/2-\alpha}Y^{-1/2}.
\end{align}
Thus $a_0\in\mathcal S^0_{-1/2}$.

Suppose inductively that $a_j\in\mathcal S^j_{-1/2-j}$, so
\begin{align}\label{eq:aj-induction-bound}
|\partial_x^\alpha\partial_h^\beta a_j(x,h)|
\leq C_{\alpha\beta j}h^{-\beta}X^{-1/2-\alpha}
Y^{-1/2-j}L_h(x)^j.
\end{align}
In particular,
\begin{align}\label{eq:gj-bound}
|g\partial_h^\beta a_j''+g'\partial_h^\beta a_j'|
\leq C_{\beta j}h^{-\beta}X^{-1/2}Y^{-1/2-j}L_h(x)^j.
\end{align}
Define $F_j=a_0(ga_j''+g'a_j')$.
Combining \eqref{eq:rho-bound}--\eqref{eq:gj-bound} with
$|\partial_x^r g|\leq C_rX^{2-r}$ and
$|\partial_x^r g'|\leq C_rX^{1-r}$, we obtain
\begin{align}\label{eq:Fj-bound}
|\partial_x^\alpha\partial_h^\beta F_j(x,h)|
\leq C_{\alpha\beta j}h^{-\beta}X^{-1-\alpha}
Y^{-1-j}L_h(x)^j.
\end{align}

For $m\geq1$ and $\ell\geq0$, the integral estimate needed below is
\begin{align}\label{eq:log-integral-bound}
\int_x^\infty(1+y)^{-1}(1+hy)^{-m}L_h(y)^\ell\,dy
\leq(1+hx)^{-m}L_h(x)^{\ell+1}.
\end{align}
Indeed, $L_h'=-(1-h)/(XY)$ and
\[
-\partial_x\bigl(Y^{-m}L_h^{\ell+1}\bigr)
=X^{-1}Y^{-m}L_h^\ell
\left(\frac{mhXL_h+(\ell+1)(1-h)}{Y}\right)
\geq X^{-1}Y^{-m}L_h^\ell.
\]
Here we used $m,L_h,\ell+1\geq1$ and $hX+1-h=Y$.
Integrating to infinity proves \eqref{eq:log-integral-bound}, since
$Y^{-m}L_h^{\ell+1}\to0$ there for fixed $h>0$.

Set $I_j(x,h)=\int_x^\infty F_j(y,h)\,dy$.
Applying \eqref{eq:log-integral-bound} with $m=j+1$, $\ell=j$ to
\eqref{eq:Fj-bound} gives
\begin{align}\label{eq:Ij-zero-bound}
|\partial_h^\beta I_j(x,h)|
\leq C_{\beta j}h^{-\beta}Y^{-1-j}L_h(x)^{j+1}.
\end{align}
Differentiation under the integral is justified by
\eqref{eq:Fj-bound}, locally uniformly for $h>0$.
For $\alpha\geq1$, use
$\partial_x^\alpha\partial_h^\beta I_j
=-\partial_x^{\alpha-1}\partial_h^\beta F_j$ and \eqref{eq:Fj-bound}.
Combining these bounds with \eqref{eq:Ij-zero-bound} and $L_h\geq1$, we obtain
\begin{align}\label{eq:Ij-bound}
|\partial_x^\alpha\partial_h^\beta I_j(x,h)|
\leq C_{\alpha\beta j}h^{-\beta}X^{-\alpha}
Y^{-1-j}L_h(x)^{j+1},\quad\alpha\in\mathbb N_0.
\end{align}
Finally, $a_{j+1}=-(i/2)a_0I_j$. By Leibniz' rule,
\eqref{eq:rho-bound} and \eqref{eq:Ij-bound},
\[
|\partial_x^\alpha\partial_h^\beta a_{j+1}(x,h)|
\leq C_{\alpha\beta j}h^{-\beta}X^{-1/2-\alpha}
Y^{-3/2-j}L_h(x)^{j+1}.
\]
This is the assertion $a_{j+1}\in\mathcal S^{j+1}_{-1/2-(j+1)}$.
\end{proof}

\medskip

{\small
\subsubsection*{Acknowledgments} The authors would like to thank Ludwik D{\ogonekaccent a}browski for drawing their attention to the problem.  K.~Taira and M.~Wrochna gratefully acknowledge support from the Research Institute for Mathematical Sciences in Kyoto (RIMS). K.~Taira was partially supported by JSPS KAKENHI Grant Number JP23K13004. M.~Wrochna was supported by the Dutch Research Council (NWO) through the grant OCENW.M.24.071, and by the Erwin Schr\"odinger Institute in Vienna (ESI) through the program ``Spectral Theory and Mathematical Relativity'', 2023. \medskip }

\end{document}